\journal{arXiv}
\DeclareMathAlphabet{\mathpzc}{OT1}{pzc}{m}{it}
\DeclareMathOperator*{\cl}{cl}
\newtheorem{theorem}{Theorem}
\newtheorem{lemma}[theorem]{Lemma}
\newtheorem{corollary}[theorem]{Corollary}
\newtheorem{proposition}[theorem]{Proposition}
\newdefinition{definition}{Definition}
\newdefinition{hypothesis}{Hypothesis}
\newdefinition{remark}{Remark}
\newproof{proof}{Proof}
\newdefinition{example}{Example}
\def\A{{\mathcal A}}
\def\B{{\mathcal B}}
\def\C{{\mathcal C}}
\def\V{\mathbf V}
\def\X{\mathbf X}
\def\Y{\mathbf Y}
\def\L{\mathbf L}
\def\0{\mathbf 0}
\def\g{\mathbf g}
\def\n{\bm{\widehat{ n} } }
\def\u{\mathbf u}
\def\uh{\mathbf {u}^{h}}
\def\ph{p^{h}}
\def\v{\mathbf v}
\def\x{\mathbf x}
\def\y{\mathbf y}
\def\div{\bm{\nabla} \cdot}
\def\grad{\bm{\nabla}}
\def\uone{\mathbf{u}_{1}}
\def\uoneh{\mathbf{u}_{1}^{h}}
\def\uonehp{\mathbf{u}_{1}^{h'}}
\def\utwo{\mathbf{u}_{2}}
\def\utwoh{\mathbf{u}_{2}^{h}}
\def\R{\bm{\mathbbm{R} } }
\def\Hdiv{\mathbf{H_{div}}}
\def\wone{\mathbf{w}_{1}}
\def\wtwo{\mathbf{w}_{2}}
\def\vone{\mathbf{v}_{1}}
\def\voneh{\mathbf{v}_{1}^{h}}
\def\vonehp{\mathbf{v}_{1}^{h'}}
\def\vtwo{\mathbf{v}_{2}}
\def\vtwoh{\mathbf{v}_{2}^{h}}
\def\qone{q_{1}}
\def\qoneh{q_{1}^{h}}
\def\qtwo{q_{2}}
\def\qtwoh{q_{2}^{h}}
\def\rone{r_{1}}
\def\rtwo{r_{2}}
\def\pone{p_{1}}
\def\poneh{p_{1}^{h}}
\def\ponehp{p_{1}^{h'}}
\def\ptwo{p_{2}}
\def\ptwoh{p_{2}^{h}}
\def\ptwohp{p_{2}^{h'}}
\def\outer{\bm {\widehat{\nu} } }
\def\flux{f_{\bm{\hat{n}} } }
\def\stress{f_{\Sigma } }
\def\defining{\overset{\mathbf{def}} =}
\def\triang{\mathcal{T}}
\def\tone{\mathcal{T}_{1} }
\def\ttwo{\mathcal{T}_{2}} 
\def\map{\mathcal{G} }
\def\mapone{\mathcal{G}_{1} }
\def\maptwo{\mathcal{G}_{2}} 
\def\Omeone{\Omega_{1}}
\def\Ometwo{\Omega_{2}}
\def\ind{\boldsymbol{\mathbbm{1}}}
\begin{document}

\begin{frontmatter}

\title{A Conforming Primal-Dual Mixed Formulation \\
for the 
2D Multiscale Porous Media Flow Problem}
\tnotetext[mytitlenote]{This material is based upon work supported by project HERMES 27798 from Universidad Nacional de Colombia,
Sede Medell\'in.}

\author[mymainaddress]{Fernando A Morales } 


\cortext[mycorrespondingauthor]{Corresponding Author}
\ead{famoralesj@unal.edu.co}

\address[mymainaddress]{Escuela de Matem\'aticas
Universidad Nacional de Colombia, Sede Medell\'in \\
Calle 59 A No 63-20 - Bloque 43, of 106,
Medell\'in - Colombia}


\begin{abstract}
In this paper a new primal-dual mixed finite element method is introduced, aimed to model multiscale problems with several geometric subregions in the domain of interest. In each of these regions porous media fluid flow takes place, but governed by physical parameters at a different scale; additionally, a fluid exchange through contact interfaces occurs between neighboring regions. The well-posedness of the primal-dual mixed finite element formulation on bounded simply connected polygonal domains of the plane is presented. Next, the convergence of the discrete solution to the exact solution of the problem is discussed, together with the convergence rate analysis. Finally, the numerical examples illustrate the method's capabilities to handle multiscale problems and interface discontinuities as well as experimental rates of convergence. 
\end{abstract}

\begin{keyword}
coupled discontinuous Darcy system, mixed formulations, multi scale problems.
\MSC[2010] 65M60 \sep 35J50 \sep 65N12
\end{keyword}

\end{frontmatter}



%
%

%
%
%
%
%
\section{Introduction}   
%
%
%
%
Mixed variational formulations are a very important topic of research in applied mathematics. The Babuska-Brezzi theory (see \textsc{Theorem} \ref{Th well posedeness mixed formulation classic}) is remarkably powerful from the theoretical point of view, however it introduces high complexity in the discrete finite element spaces approximating the solution; this reflects in numerical stability problems  (see \cite{Hughes}). The achievements to overcome such difficulty can be on several directions. One of the streams seeks to stabilize the approximation by modifying the bilinear forms involved, namely using symmetric properties of the tensors as in  \cite{Gatica2}, \cite{Gatica1}, or including terms in the bilinear forms in a ``balanced" way as in\cite{FortinBrezzi2}, \cite{Hughes}. As this technique has proved to be fruitful and rich in terms of the possibilities to stabilize the forms of interest, some other aspects arise by itself, such as the discussion of minimal stabilisation procedures (see \cite{FortinBrezzi3}), or the a-priori, a-posteriori error analysis for these new scheme (see \cite{Gatica2}). A second approach uses discontinuous Galerkin finite elements (DG). The DG methods have several advantages and goals, some of these are: addressing non-conformality in a more flexible way, treating stability issues due to coupling constraints (demanding regularity in the discrete spaces), and computing in a more accurate way the physical quantity that is known to be predominant in specific subregions. The latter is attained in two ways, by local refining of the mesh and by approximating polynomial spaces; see \cite{Brezzi1}, \cite{Cockburn} for a unified vision of the DG Methods.   

All the aforementioned works, whichever the problem they may be analyzing (elasticity, heat diffusion, free flow, Darcy flow, etc), treat separately the primal and dual mixed formulations (see \cite{BrezziFortin}, \cite{GiraultRaviartNS}, \cite{RaviartThomas1977}). The present paper is focused on using simultaneously both fundamental versions for the treatment of multiscale problems in Darcy flow (see \textsc{Problem} \eqref{Eq porous media strong}), it is therefore a primal-dual mixed formulation; in a way this article is the numerical implementation of the formulation introduced in \cite{MoralesNaranjo} (see also \cite{MoralesShow2}, \cite{Morales2} for related formulations). The stability aspects become particularly critical when dealing with multiscale problems, as the presence of physical coefficients with different orders of magnitude adds up to the built-in complexity of the mixed variational formulations (coefficient $a(\cdot)$ in \textsc{Problem}  \eqref{Eq porous media strong}). The primal-dual mixed formulation tackles this issue by removing coupling constraints from the discrete trial spaces while satisfying them only on the solution i.e., the continuous formulation replaces \textit{strong coupling} conditions by \textit{weak coupling} conditions (see \textsc{Equations} \eqref{Eq interface balance conditions} and \textsc{Problem} \eqref{Pblm weak continuous solution}). Replacing the nature of the coupling conditions is a strategy already used in DG methods using penalization techniques; however, this is done only on the discrete version, while the continuous formulation still relies on strong coupling conditions. The latter is because, in the Darcy flow problem, while the primal mixed formulation can introduce weak coupling conditions on the normal flow exchange, the normal stress has to stay continuos. In contrast, the dual mixed formulation can introduce weak coupling conditions for the normal stress balance, but it requires the normal flow exchange to be continuous. The continuity constraints of the classical mixed formulations reflect later on, in the deep discussions of convergence present in the DG methods. 

Another advantage of the discrete primal-dual mixed formulation we are to introduce in this work is that, according to the regions, the predominant effect can be chosen to be modeled with the discrete space holding the sense of continuity, while the secondary effect is modeled with the discontinuous space. In the case of Darcy flow, the pressure is the dominant effect in regions of low permeability, while the flow velocity is the predominant one in regions of high permeability (see \textsc{Figures}  \ref{Fig Approximate Solution Numerical Example}, \ref{Fig Disc Approximate Solution Numerical Example} and  \ref{Fig Multiscale Jump Approximate Solution Numerical Example}).  This concept has already araised naturally in previous DG methods coupling advection with diffusion phenomena, due to the discrete spaces involved in the formulations, see \cite{Dawson}. To the author's best knowledge there is no precedent for having this level of flexibility in the analysis of coupling fluid flow phenomena, as the literature analyzing multiscale flow is mainly focused in coupling Stokes flow with Darcy flow, see \cite{ArbBrunson2007}, \cite{ArbLehr2006}, \cite{Gatica3}, \cite{Layton}, \cite{MoralesShow3}.

The proposed model is to analyze a variation of the classic porous media problem on a connected bounded open region $\Omega\subset \R^{2}$, i.e., 
\begin{subequations}\label{Eq porous media strong}
\begin{equation}\label{Eq Darcy Strong}
a(\cdot)\, \u + \grad p  + \g = 0\,,
\end{equation}
\begin{equation}\label{Eq conservative strong}
\div \u = F\,\quad \mathrm{in}\; \Omega .
\end{equation}
\begin{equation}\label{Eq Drained Condition}
p = 0 \,\quad \mathrm{on}\; \Gamma_{d} ,
\end{equation}
\begin{equation}\label{Eq Non-Flux Condition}
\u\cdot\n = 0 \quad \mathrm{on}\; \Gamma_{f}\defining\partial \Omega - \Gamma_{d} ;
\end{equation}
\end{subequations}
more specifically, when $\Omega$ is partitioned in two subdomains $\Omega_{1}, \Omega_{2}$ such that $a(\cdot)\big\vert_{\Omega_{1}} = O(1)$ and $a(\cdot)\big\vert_{\Omega_{2}} = O(\epsilon)$ for $\epsilon>0$ small (see \textsc{Figure} \ref{Fig Bipartite Map and Consistent Grid}). Recall that $ a(\cdot) $ is the flow resistance i.e., the viscosity times the inverse of permeability of the porous medium. Systems such as this, are suited for the modeling of oil reservoirs and subsurface water, where a network of thin channels, embedded in bedrock occurs, therefore the flow resistance coefficient changes its order of magnitude from one region $ \Omega_{1} $ to the other $ \Omega_{2} $. In this context the continuity of the solution $[\u, p]$ across the interface between $\Omega_{1}$ and $\Omega_{2}$ becomes a liability from the numerical point of view. Therefore, if it is possible to estimate a-priori, the magnitude of change that the solution will experience from one subdomain to the other (see \textsc{Example} \ref{Ex Multiscale Jump Example}), it is a more strategic approach to artificially introduce a discontinuity across the interface, and model it with a system, see \textsc{Equations} \eqref{Eq porous media strong decomposed}, satisfying a balance/coupling condition for both, normal flux and normal stress, see \textsc{Equations} \eqref{Eq interface balance conditions}. As mentioned above, these exchange conditions will be introduced weakly in the formulation allowing full decoupling of the underlying function spaces. Moreover, the trial spaces require that the pressure $ q $ is only square integrable $ L^{2} $ on one side of the interface, while it belongs to $ H^{1} $ on the other side of the interface (see \textsc{Figure}  \ref{Fig Approximate Solution Numerical Example} (a)); such discontinuity on the test spaces is ideal to handle discontinuities on the normal stress across the interface. The analogous takes place on the velocities modeling spaces, here the test functions $\v$ belong to $\Hdiv$ on one side of the interface while they are only square integrable $\mathbf{L}^{2}$ on the other (see \textsc{Figure}  \ref{Fig Approximate Solution Numerical Example} (b)). Again, this scenario will be ideal for discontinuities of normal flux across the interface. In summary, the primal-dual mixed formulation method will be able to capture interface discontinuities using uncoupled, conforming, finite dimensional spaces, presented in \textsc{Definitions} \ref{Def Global Finite Dimensional Spaces} and \ref{Def finite function spaces}.

We close this section introducing the general notation. In the present work vectors are denoted by boldface letters as are vector-valued functions and corresponding function spaces. The symbols $\grad$ and $\div$ represent the gradient and divergence operators respectively. The dimension is indicated by $N$ which will be equal to $2$ or $3$ depending on the context. Given a function $f: \R^{N}\rightarrow \R$ then  $\int_{\mathcal {M} } f\,dS$ denotes the integral on the $N-1$ dimensional manifold $\mathcal{M}\subseteq \R^{\! N}$. Analogously, $\int_{A} f\, d\x$ stands for the integral in the set $A\subseteq \R^{\! N}$; whenever the context is clear we simply write $\int_{A} f$. 
Given an open set $G$ of $\R^{N}$, the symbols $\Vert\cdot\Vert_{0,G}$, $\Vert\cdot\Vert_{1,G}$, $\Vert\cdot\Vert_{1/2, \partial G}$, $\Vert\cdot\Vert_{-1/2, \partial G}$ and $\Vert\cdot\Vert_{\Hdiv(G)}$ denote the $L^{2}(G)$, $H^{1}(G)$, $H^{1/2} (\partial G)$, $H^{-1/2} (\partial G)$ and $\Hdiv(G)$ norms respectively, while $\vert M\vert$ represents the Lebesgue measure of $G$ in $\R$, $\R^{2}$ or $\R^{3}$ depending on the context.
%
%
%
%
%
%
%
%
%
%
%
%
%
\section{Preliminaries}   
%
%
%
%
%
%
\subsection{Geometric Setting}
%
%
In this section we set the conditions on the domain of reference as well as its gridding.
\begin{definition}
Given a bounded open set $\omega$ in $\R^{2}$ we will say that a \textbf{bipartite map} is a finite collection of connected open subsets $\mathcal{G} = \{G_{n}: 1\leq n \leq N \}$ such that
\begin{enumerate}[(i)]
\item If $n\neq k$ then $G_{n}\cap G_{k} = \emptyset$.

\item The union satisfies $\Big\vert \omega - \bigcup\limits_{i\, = \, 1}^{N}  G_{n} \Big\vert = 0$ and $\cl (\omega )= \bigcup\limits_{i\, = \, 1}^{N} \cl (G_{n} )$.

\item The collection $\mathcal{G} =\big\{G_{n}: 1\leq n \leq N \big\}$ is partitioned in two subcollections  $\mathcal{G}_{1} =\big\{G^{\,1}_{i}: 1\leq i \leq I \big\}$ and $\mathcal{G}_{2} = \big\{G^{\,2}_{j}: 1\leq j \leq J \big\}$ such that 
\begin{enumerate}[a)]
\item $\big\{G_{n}: 1\leq n \leq N \big\} = 
\big\{G^{1}_{i}: 1\leq i \leq I \big\} \cup 
\big\{G^{2}_{j}: 1\leq j \leq J \big\}$.

\item If $i \neq k$ then $\big\vert\partial G^{1}_{i}\cap \partial G^{1}_{k}\big\vert = 0$.

\item If $j \neq \ell$ then $\big\vert\partial G^{\,2}_{j}\cap \partial G^{\,2}_{\ell}\ \big\vert = 0$.
\end{enumerate}
The collections $\mathcal{G}_{1}$ and $\mathcal{G}_{2}$ are said to be the \textbf{bipartition} or the \textbf{bi-coloring} of the map.
\end{enumerate}
%
\end{definition}
\begin{hypothesis}\label{Hyp Geometry of the Domain}
The domain of interest $\Omega$ is a polygonal, bounded, connected region of the plane and, it satisfies that 
\begin{enumerate}[(i)]
\item It has a bipartite map $\mathcal{G} = \{G_{n}: 1\leq n \leq N \}$ such that $G_{n}$ is a polygon for each $n = 1, \ldots, N$. 

\item If $\mapone, \maptwo$ is the bipartition of the map $\map$ then $\cl \big[\bigcup \{L:L\in \mapone\} \big]$ and $\cl \big[ \bigcup \{L:L\in \maptwo\} \big]$ are connected. 
\end{enumerate}
\end{hypothesis}
An example of bipartite map is depicted in \textsc{Figure} \ref{Fig Bipartite Map and Consistent Grid} (a), together with some other concepts introduced in the following definition.
\begin{definition}\label{Def geometric convention WVF}
Let $\Omega$ satisfy \textsc{Hypothesis} \ref{Hyp Geometry of the Domain} and let $\mathcal{G} = \{G_{n}: 1\leq n \leq N \}$ be its bipartite map with $\mathcal{G}_{1}$, $\mathcal{G}_{2}$ the map bipartition.
\begin{enumerate}[(i)]
\item
For each polygon $K\in \map$ denote by $\outer$ the outer normal vector to its boundary $\partial K$.

\item
For each polygon $K\in \map$ define $\n$ by
\begin{equation}\label{Def normal vector}
\n(\x) \defining\begin{cases}
\outer(\x) & K\in \mapone\; \text{and}\;\x \in \partial K  ,\\
-\outer(\x) & K \in \maptwo\; \text{and}\;\x \in \partial K\cap \Omega , \\
\outer(\x) & K \in \maptwo \, \text{and} \;\x \in \partial K\cap \partial \Omega .
\end{cases}
\end{equation}

\item
Define $\displaystyle \Omeone \defining \bigcup \{L:L\in \mapone\}$ and $\displaystyle\Ometwo \defining \bigcup \{M:M\in \maptwo\}$.

\item
Denote by $\displaystyle\Gamma\defining \bigcup\{\partial K: K\in \map\} - \partial \Omega$, the \textbf{interface} of the domain.

\end{enumerate}
\end{definition}
\begin{figure}[h] 
	\centering
	\begin{subfigure}[Bipartite Map $\map$ of region $\Omega$. ]
			{ \includegraphics[scale = 0.6]{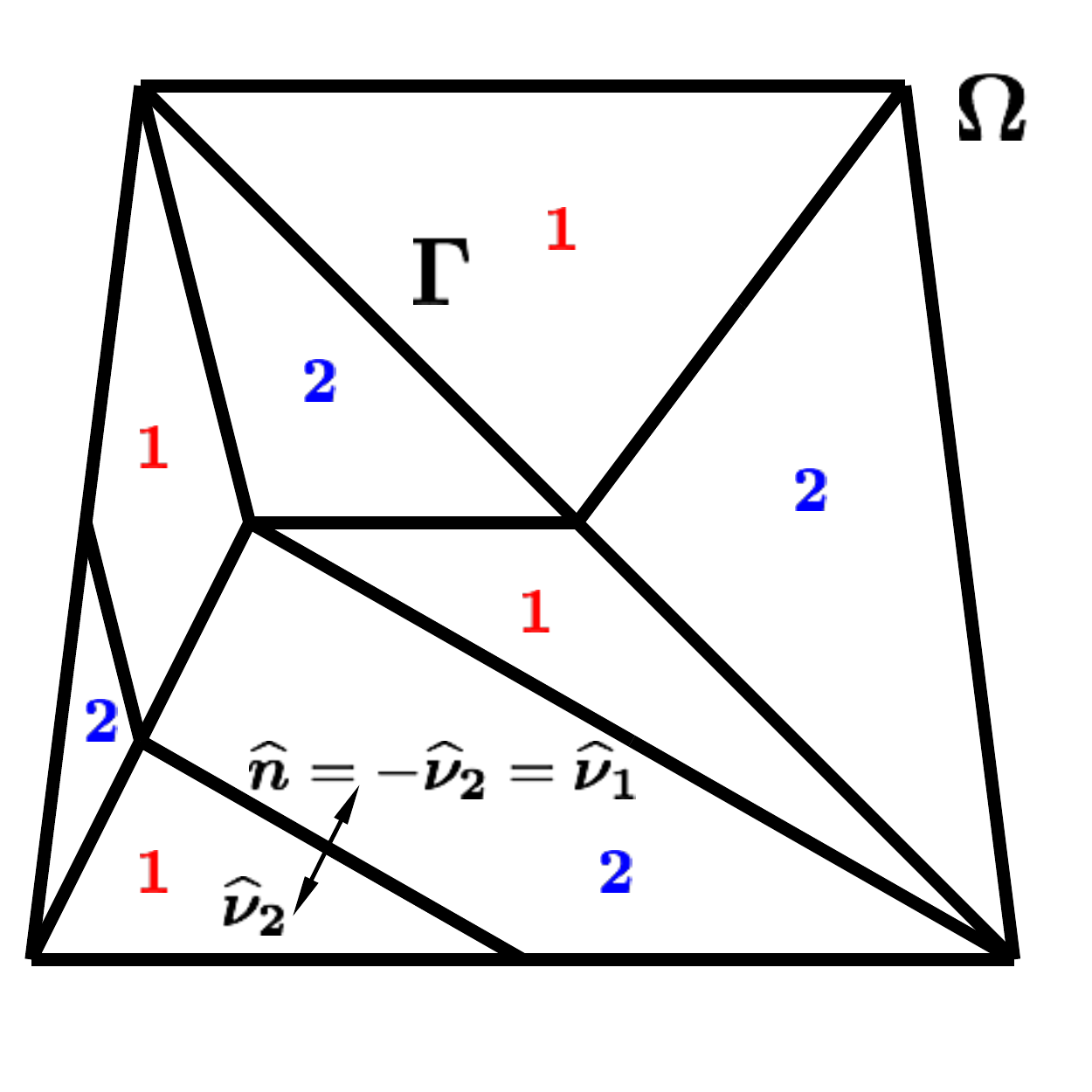}  }
	\end{subfigure} 
	~ 
	\begin{subfigure}[Grid $\triang$ consistent with $\map$.]
			{\includegraphics[scale = 0.6]{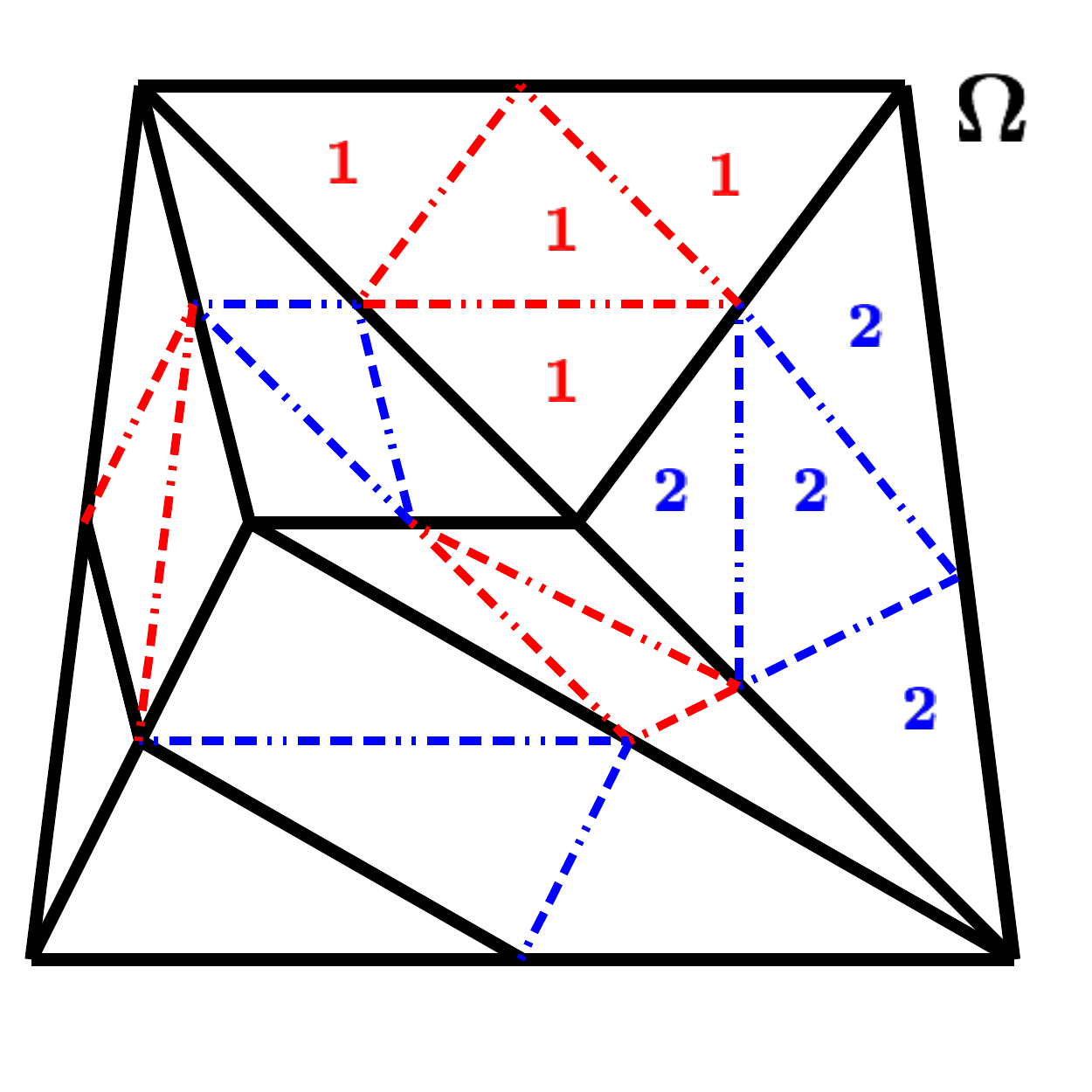} }                
	\end{subfigure} 
	%
	%
	%
	\caption{Figure (a) depicts a bipartite map $\map$ example for a given region $\Omega$. Subregions belonging to $\map_{i}$ have been labeled with $i$ for $i = 1,2$. The vector $\n$ and the outer normal vectors $\outer_{1}$, $\outer_{2}$ are illustrated for a couple of neighboring elements belonging to $\mapone$ and $\maptwo$ respectively. 
	Figure (b) depicts an example of grid $\triang$ consistent with the map $\map$. Some of the triangles belonging to $\tone$ and $\ttwo$ have been labeled with $1$ and $2$ respectively. \label{Fig Bipartite Map and Consistent Grid} }
\end{figure}
Next, we define the type of grids that will be considered in this work, see \textsc{Figure} \ref{Fig Bipartite Map and Consistent Grid} (b) for a simple example.
\begin{definition}\label{Def Consistent Grid}
Let $\Omega$ be as in \textsc{Definition} \ref{Def geometric convention WVF} above, then 
\begin{enumerate}[(i)]
   \item A triangulation $\triang$ of the domain $\Omega$ is said to be \textbf{consistent} with the map $\map = (\mapone, \maptwo)$ if for each triangle $K\in \triang$, it holds that $K\cap \Omeone = \emptyset$ or $K\cap \Ometwo = \emptyset$. Equivalently, $\ind_{K\cap \Omega_{1}}(\cdot) \ind_{K\cap \Omega_{2}}(\cdot) = 0$. 
   
   \item Given two triangulations $\triang'$ and $\triang$ of the domain $\Omega$, we say that $\triang '$ is a \textbf{refinement} of $\triang$, denoted by $\triang ' \leq \triang$, if for each element $K ' \in \triang '$ there exists a triangle $K\in \triang$ such that $K' \subseteq K$. 
   
   \item A sequence $\{\triang^{h}: h>0 \}$ is said to be \textbf{monotone} if $h' < h$ implies that $\triang^{h'}\leq \triang^{h}$.
\end{enumerate}
\end{definition}
%
%
%
%
%
%
\subsection{The Strong Problem and its Continuous Weak Formulation}
%
%
We begin this section recalling the general abstract setting to be used in this article. Let $\X$ and $\Y$ be Hilbert spaces and let $\A: \X\rightarrow \X $$'$, $\B: \X\rightarrow \Y $$'$ and $\C: \Y\rightarrow \Y $$'$ be continuous linear operators, we are to work on the following problem
\begin{equation}\label{Pblm operators abstrac system}
%
%
\begin{split}
\text{Find a pair}\; (\x, \y)\in \X\times \Y: \quad 
\A \x + \B '\y  = F_{1}\quad \text{in}\; \X ' , \\
%
- \B \x  + \C \y = F_{2} \quad \text{in}\; \Y ' ,
\end{split}
\end{equation}
where $F_{1}\in \X '$ and $F_{2} \in \Y '$. The following is a well-known result \cite{GiraultRaviartFEM}.
\begin{theorem}\label{Th well posedeness mixed formulation classic}
Assume that the linear operators $\A: \X\rightarrow \X'$, $\B: \X\rightarrow \Y '$, $\C: \Y\rightarrow \Y '$ are continuous and
\begin{enumerate}[(i)]
\item $\A$ is non-negative and $\X$-coercive on $\ker (\B)$. 

\item $\B$ satisfies the inf-sup condition 
\begin{equation}\label{Ineq general inf-sup condition}
   \inf_{\y \, \in \, \Y} \sup_{\x \, \in \, \X}
   \frac{\vert  \B\x(\y) \vert }{\Vert \x\Vert_{\X}\, \Vert \y \Vert_{\Y}}  >0 \, .
\end{equation}

\item $C$ is non-negative symmetric.
\end{enumerate}
Then for every $F_{1} \in \X '$ and $F_{2} \in \Y '$ the \textsc{Problem} \eqref{Pblm operators abstrac system} has a unique solution in $(\x, \y)\in \X \times \Y$; additionally it satisfies the estimate
\begin{equation} \label{mix-est}
\Vert\x\Vert_{\X} + \Vert \y\Vert_{\Y} \leq c\, (\Vert F_{1}\Vert_{\X '} + \Vert F_{2}\Vert_{\Y '}).
\end{equation}
%
\end{theorem}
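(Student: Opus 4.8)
The plan is to recast the system as the classical Brezzi saddle-point problem and to construct its solution, using the three hypotheses in turn. Writing $V \defining \ker(\B) \subseteq \X$ and the orthogonal splitting $\X = V \oplus V^{\perp}$, the first observation is that hypothesis (ii) has an equivalent operator content via the closed range theorem: the restriction $\B\big\vert_{V^{\perp}}: V^{\perp} \to \Y'$ is a bounded bijection with bounded inverse $\mathcal{S}$ satisfying $\Vert \mathcal{S}\Vert \le 1/\beta$, and the transpose $\B': \Y \to V^{\circ}\subseteq\X'$ is bounded below, $\Vert\B'\y\Vert_{\X'}\ge\beta\,\Vert\y\Vert_{\Y}$. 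Thus $\B$ admits a bounded right inverse, and $\y$ is recoverable with stability from any equation of the form $\B'\y = (\cdot)\in V^{\circ}$.

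First I would use the lifting $\mathcal{S}$ to absorb the second equation. Setting $\x = \x_{0} + \mathcal{S}(\C\y - F_{2})$ with the free part $\x_{0}\in V$ still to be determined, one checks at once that the second equation $-\B\x + \C\y = F_{2}$ holds identically for every $\x_{0}\in V$ and every $\y\in\Y$, since $\B\x = \B\mathcal{S}(\C\y - F_{2}) = \C\y - F_{2}$. This reduces the task to finding $(\x_{0},\y)\in V\times\Y$ from the first equation alone. Splitting the test function $\v = \v_{0}+\v_{1}$ with $\v_{0}\in V$, $\v_{1}\in V^{\perp}$ and using $\B\v_{0}(\y)=0$, the first equation separates into a component posed on $V$, namely $\A\x(\v_{0}) = F_{1}(\v_{0})$ for all $\v_{0}\in V$, and a component on $V^{\perp}$ that, through the isomorphism $\B'$, determines $\y$ in terms of $\x$.

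The core of the argument is then to solve the coupled reduced system for $(\x_{0},\y)$. The natural route is the Schur complement in $\y$: the $V$-equation, together with hypothesis (i) (so that $\A$ is coercive on $V$ and the pure-$\x_{0}$ block is invertible by the Lax--Milgram lemma on the Hilbert space $V$), expresses $\x_{0}$ as a bounded affine function of $\y$; substituting into the $V^{\perp}$-equation yields a single variational problem for $\y\in\Y$ whose bilinear form I claim is $\Y$-coercive. Coercivity is exactly where the three hypotheses combine: the leading contribution comes from $\B'$ (bounded below by $\beta$ via the inf--sup condition), the $\A$-terms are controlled using the coercivity on $V$ and the boundedness of $\mathcal{S}$, and the term produced by $\C$ is $\ge 0$ by hypothesis (iii) and therefore only reinforces positivity. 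The Lax--Milgram lemma then delivers a unique $\y$, hence a unique $\x_{0}$, a unique $\x_{1}=\mathcal{S}(\C\y-F_{2})$, and finally $\x$; tracking the constants $\alpha,\beta$ and $\Vert\A\Vert,\Vert\B\Vert,\Vert\C\Vert$ through the estimates produces the bound \eqref{mix-est}.

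The main obstacle I anticipate is precisely the verification of coercivity of the reduced Schur form, because $\A$ is coercive only on $V=\ker(\B)$ and merely non-negative on $V^{\perp}$: the entire control of the $V^{\perp}$-part of $\x$ must be extracted from the inf--sup bound for $\B$ rather than from $\A$, and the cross terms coupling $\x_{0}$, $\x_{1}=\mathcal{S}(\C\y-F_{2})$ and $\y$ must be absorbed by Young's inequality with a careful choice of weights. The sign condition $\C\ge 0$ enters only to guarantee that $\C$ contributes no negative definite term, so that no smallness hypothesis on $\C$ is required. An equivalent packaging of the same estimates is to verify directly the global inf--sup (Banach--Ne\v{c}as--Babu\v{s}ka) condition for the combined bilinear form $\A\x(\v) + \B\v(\y) - \B\x(\w) + \C\y(\w)$ on $(\X\times\Y)^{2}$, testing against a suitably weighted combination of $(\x,\y)$ with the inf--sup liftings; the delicate balancing of those weights is the same difficulty seen from a different angle.
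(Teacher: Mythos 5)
The paper offers no proof of this theorem at all --- it is quoted as a known result from the Girault--Raviart reference --- so your argument has to stand on its own, and it does not: its central step is false. The preliminary reductions are sound. Hypothesis (ii) does give the bounded right inverse $\mathcal{S}:\Y'\rightarrow V^{\perp}$ with $\Vert\mathcal{S}\Vert\le 1/\beta$, the substitution $\x=\x_{0}+\mathcal{S}(\C\y-F_{2})$ does dispose of the second equation, and Lax--Milgram on $V$ (no symmetry needed) gives $\x_{0}=T(\y)$ as a bounded affine map. The gap is the claim that the reduced Schur form in $\y$ is $\Y$-coercive: in general it is not --- it can be strictly negative on the diagonal, so no ``careful choice of weights'' in Young's inequality can rescue it, since coercivity is a property of the form itself. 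Take $\X=\R^{3}$, $\Y=\R^{2}$, $\B u=(u_{1},u_{2})$ for $u=(u_{0},u_{1},u_{2})$ (so $V=\R\times\{0\}^{2}$ and the inf-sup constant is $1$), $\langle\A u,v\rangle\defining v_{0}(u_{0}+u_{1}+u_{2})-u_{0}v_{1}-u_{0}v_{2}$ (so $\langle\A u,u\rangle=u_{0}^{2}\ge 0$, and $\A$ is coercive on $V$ with constant $1$), and $\C=\mathrm{diag}(6,0)$, symmetric and non-negative. With $F_{1}=F_{2}=0$ one computes $\mathcal{S}\C\y=(0,6y_{1},0)$, $T(\y)=(-6y_{1},0,0)$, hence $\A\big(T(\y)+\mathcal{S}\C\y\big)$ is the functional $v\mapsto 6y_{1}v_{1}+6y_{1}v_{2}$, and testing the $V^{\perp}$-equation against $\mathcal{S}w$ (the natural identification of the test space with $\Y$; composing with $(\B')^{-1}$ instead gives the same matrix) yields $b(\y,w)=7y_{1}w_{1}+6y_{1}w_{2}+y_{2}w_{2}$. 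Then $b\big((1,-2),(1,-2)\big)=7-12+4=-1<0$. The Schur operator is nevertheless invertible --- its matrix is triangular with determinant $7$ --- exactly as the theorem asserts; but Lax--Milgram coercivity is not the mechanism, so the proof cannot be completed as you designed it.

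What is missing is the one place where the \emph{symmetry} of $\C$ (which you never use --- you invoke only $\C\ge 0$, explicitly ``only to guarantee no negative definite term'') must enter: the Cauchy--Schwarz inequality for a bounded non-negative symmetric form, $\vert\langle\C\y,w\rangle\vert\le\langle\C\y,\y\rangle^{1/2}\langle\C w,w\rangle^{1/2}\le\Vert\C\Vert^{1/2}\langle\C\y,\y\rangle^{1/2}\Vert w\Vert_{\Y}$. The quantity $\langle\C\y,\y\rangle$ is precisely what the problem controls: testing the system against its own solution gives $\langle\A\x,\x\rangle+\langle\C\y,\y\rangle=F_{1}(\x)+F_{2}(\y)$. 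Writing $\x=\x_{0}+\x_{1}\in V\oplus V^{\perp}$, the resulting bound $\Vert\x_{1}\Vert\le\beta^{-1}\big(\Vert\C\Vert^{1/2}\langle\C\y,\y\rangle^{1/2}+\Vert F_{2}\Vert\big)$ replaces the operator-norm bound $\Vert\x_{1}\Vert\le\beta^{-1}(\Vert\C\Vert\,\Vert\y\Vert+\Vert F_{2}\Vert)$, which is circular for large $\Vert\C\Vert$; combined with $\alpha\Vert\x_{0}\Vert\le\Vert F_{1}\Vert+\Vert\A\Vert\,\Vert\x_{1}\Vert$ (kernel coercivity) and $\beta\Vert\y\Vert\le\Vert F_{1}\Vert+\Vert\A\Vert\,\Vert\x\Vert$ (lower bound for $\B'$), Young's inequality now closes to the estimate \eqref{mix-est} with no smallness assumption on $\Vert\C\Vert$. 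This a priori bound gives uniqueness and closed range of the block operator; existence follows because the adjoint system has the identical structure ($\A$ replaced by $\A'$, still non-negative and coercive on $V$; $\C$ unchanged by symmetry), so the same bound shows the adjoint is injective, hence the range is dense --- alternatively, run a Galerkin or $\epsilon$-regularization argument and pass to the limit using the same uniform bound. Your closing Banach--Ne\v{c}as--Babu\v{s}ka remark can indeed be repaired along these lines, but as written (``an equivalent packaging of the same estimates'') it inherits exactly the same gap.
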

Next, we present the strong problem to be approximated. Given a region $\Omega$ verifying \textsc{Hypothesis} \ref{Hyp Geometry of the Domain}, we introduce the following generalization of the Darcy flow \textsc{Problem} \eqref{Eq porous media strong}. 
\begin{multicols}{2}
\begin{subequations}\label{Eq porous media strong decomposed}
\begin{equation}\label{Eq Darcy Strong decomposed 0}
a(\cdot)\, \uone + \grad \pone 
+ \g 
= 0\,,
\end{equation}
%
%
\begin{equation}\label{Eq conservative strong decomposed 0}
\div \uone =  F
\,\quad \mathrm{in}\; \Omeone .
\end{equation}
%
%
\begin{equation}\label{Eq Drained Condition decomposed}
\pone = 0  \quad \mathrm{on}\; \partial \Omega_{1}\cap \partial \Omega  .
\end{equation}
\null \vfill
\columnbreak
\begin{equation}\label{Eq Darcy Strong decomposed 1}
a(\cdot) \, \utwo + \grad \ptwo 
+ \g 
= 0\,,
\end{equation}
\begin{equation}\label{Eq conservative strong decomposed 1}
\div \utwo = F\,\quad \mathrm{in}\; \Ometwo .
\end{equation}
\begin{equation}\label{Eq Non-Flux Condition decomposed}
\utwo\cdot\n = 0 \quad \mathrm{on}\; \partial \Omega_{2}\cap \partial \Omega  .
\end{equation}
\null \vfill
\end{subequations}
\end{multicols}
%
%
Endowed with the following interface exchange balance conditions
%
%
%
\begin{subequations}\label{Eq interface balance conditions}
\begin{equation}\label{Eq normal stress balance}
\ptwo - \pone = 
\stress\,, 
\end{equation}
\begin{equation}\label{Eq normal flux balance}
\uone \cdot\n - \utwo \cdot\n = 
\beta(\cdot)\, \ptwo + \flux   .
\end{equation}
\end{subequations}
%
%
%
\noindent The problem above, allows discontinuity jumps of discontinuity across the interface $\Gamma$, due to the forcing terms in the normal stress  \eqref{Eq normal stress balance} and normal flux balance conditions \eqref{Eq normal flux balance}, both relationships are nothing but statements normal stress and normal flux balance. The coefficients $a(\cdot)$, $\beta(\cdot)$ are nonnegative and they stand for the medium resistance to the fluid flow and the interface storage rate, respectively. The multiscaling of the coefficient $ a(\cdot) $will occur when modeling problems such as geological fissured systems (see \cite{Morales2}) where regions of high permeability have to be coupled with regions of low permeability. On the other hand, the coefficient $ \beta (\cdot) $ is meaningful in this context when one of the regions stores fluid and the other does not, due to the difference in the scaling of the problem between regions, its determination/measurement is an active research field, see \cite{BhunyaStorage} for an example of related work. Finally, recall that Darcy's law relates only pressure-velocity and that the pressure only acts in normal direction with respect to the physical object in contact, therefore the interface fluid exchange conditions \eqref{Eq interface balance conditions} can only be stated in the normal direction, while it is not possible to reconcile interface tangential velocity conditions with a Darcy system, see \cite{MoralesShow3} for an example.
\newline
\newline
In order to introduce the modeling spaces to be used in the weak variational formulation, first notice that $\big\{L: L\in \tone \big\}$, $\big\{M: M\in \ttwo \big\}$ are the simply connected components of $\Omega_{1}$ and $\Omega_{2}$ respectively. Then, 
\begin{align*} 
& \Hdiv(\Omega_{1}) = \bigoplus_{L \,\in \, \tone} \Hdiv(L)  , &
& H^{1}(\Omega_{2}) = \bigoplus_{M \,\in \, \ttwo} H^{1}(M) .
\end{align*}
The following space is introduced in order to couple adequately, the action of the pressure traces in the variational formulation
\begin{equation}\label{Eq Decoupling Trace Statement}
\begin{split}
E(\Omega_{2})  \defining & \big\{ q\in H^{1}(\Omega_{2}): 
q\ind_{\partial M \cap \partial L }\in H^{1/2}(\partial L ) \; \text{for all }\, 
(L, M) \in\mapone\times\maptwo\big\}\\
=  & \big\{ q\in H^{1}(\Omega_{2}): 
q\ind_{\Gamma }\in H^{1/2}(\Gamma ) \big\} .
\end{split} 
\end{equation}
We endow $E(\Omega_{2})$ with the $H^{1}(\Omega_{2})$ inner product. It is direct to see that $E(\Omega_{2})$ is a closed subspace of $H^{1}(\Omega_{2})$ and consequently a Hilbert space. Also define
\begin{equation}\label{Def Integrals on the trace}
\V(\Omega_{2}) \defining  \big\{ \v\in \L^{\! 2}(\Omega_{2}): \vtwo = \grad \qtwo\;
\text{for some}\; \qtwo\in E(\Omega_{2}) \big\} 
= \grad (E(\Omega_{2}) ) ,
\end{equation}
endowed with the $\L^{2}(\Omega_{2})$ inner product. Next we recall a necessary result.
\begin{lemma}\label{Th Completeness of V(Omega_2)}
Let $E(\Omega_{2})$ and $\V(\Omega_{2})$ be as defined in \eqref{Eq Decoupling Trace Statement},  \eqref{Def Integrals on the trace} respectively; define 
\begin{equation}\label{Def Space Generating with Isomorphism}
E_{0} (\Omega_{2})\defining \Big\{ \qtwo \in E(\Omega_{2}): \int_{\Omega_{2}}\qtwo = 0 \Big\} .
\end{equation}
Then,
\begin{enumerate}[(i)]
\item There exists a constant $C>0$ depending only on the domain $\Omega_{2}$ such that
\begin{align}\label{Ineq Control on Space Generating with Isomorphism}
& \Vert \rtwo \Vert_{1, \Omega_{2}} \leq C \, \Vert \grad \rtwo \Vert_{0, \Omega_{2}} , &
& \text{for all }\; \rtwo\in H\, .
\end{align}

\item The space $\V(\Omega_{2})$ is Hilbert.
\end{enumerate}

\end{lemma}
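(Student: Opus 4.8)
The plan is to prove the Poincaré--Wirtinger-type estimate (i) first and then obtain the completeness of $\V(\Ometwo)$ in (ii) as an almost immediate corollary. Throughout I read the index set in (i) as $E_{0}(\Ometwo)$, i.e.\ the estimate is claimed for every $\rtwo \in E_{0}(\Ometwo)$.

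For part (i) I would argue by contradiction, leaning on the compact Rellich--Kondrachov embedding $H^{1}(\Ometwo) \hookrightarrow\hookrightarrow L^{2}(\Ometwo)$. Suppose the inequality fails; then there is a sequence $\{\rtwo^{n}\} \subseteq E_{0}(\Ometwo)$ with $\Vert \rtwo^{n} \Vert_{1,\Ometwo} = 1$ and $\Vert \grad \rtwo^{n} \Vert_{0,\Ometwo} \to 0$. By compactness a subsequence converges in $L^{2}(\Ometwo)$ to some $r$, and since the gradients tend to zero in $L^{2}$ the sequence is in fact Cauchy in $H^{1}(\Ometwo)$; hence $\rtwo^{n} \to r$ in $H^{1}(\Ometwo)$ with $\grad r = 0$ and $\Vert r \Vert_{1,\Ometwo} = 1$. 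Because $E_{0}(\Ometwo)$ is a closed subspace of $H^{1}(\Ometwo)$ (it is the kernel of the continuous functional $q \mapsto \int_{\Ometwo} q$ inside the closed space $E(\Ometwo)$), the limit $r$ lies in $E_{0}(\Ometwo)$ as well. The whole argument then reduces to the rigidity claim that the only $r \in E_{0}(\Ometwo)$ with $\grad r = 0$ is $r \equiv 0$, which contradicts $\Vert r\Vert_{1,\Ometwo}=1$.

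This rigidity step is the part I expect to be the main obstacle, because the open set $\Ometwo$ need not be connected: by the bipartite-map axioms its polygonal pieces $\{M : M \in \maptwo\}$ are pairwise disjoint and may touch only along sets of measure zero, so a priori $\grad r = 0$ forces $r$ to be a \emph{separate} constant $c_{M}$ on each piece $M$, and the zero-mean condition alone does not annihilate these constants. Here the defining trace coupling of $E(\Ometwo)$ must be used: the connectedness of $\cl(\Ometwo)$ guaranteed by \textsc{Hypothesis} \ref{Hyp Geometry of the Domain}(ii) links the pieces into a single chain through shared boundary points lying on the interface $\Gamma$, and the requirement $r\ind_{\Gamma} \in H^{1/2}(\Gamma)$ forbids a jump of the (piecewise constant) trace across such a junction, since a genuine jump at a point of $\Gamma$ has divergent $H^{1/2}$ seminorm. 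Propagating the resulting equalities $c_{M} = c_{M'}$ along the chain yields a single global constant, whence $\int_{\Ometwo} r = 0$ gives $r \equiv 0$; the delicate point to check carefully is that every link of the chain is witnessed by interface edges of \emph{both} pieces meeting at the shared vertex, rather than edges lying on $\partial\Omega$.

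For part (ii) I would use (i) to show that $\V(\Ometwo) = \grad\big(E(\Ometwo)\big)$ is closed in $L^{2}(\Ometwo)$, which makes it a closed subspace of a Hilbert space and hence Hilbert. Since subtracting its mean sends any $q \in E(\Ometwo)$ into $E_{0}(\Ometwo)$ without changing $\grad q$, one has $\grad\big(E(\Ometwo)\big) = \grad\big(E_{0}(\Ometwo)\big)$, so it suffices to work over $E_{0}(\Ometwo)$. If $\grad \rtwo^{n} \to \v$ in $L^{2}(\Ometwo)$ with $\rtwo^{n} \in E_{0}(\Ometwo)$, then $\{\grad \rtwo^{n}\}$ is Cauchy, and estimate (i) applied to the differences gives $\Vert \rtwo^{n} - \rtwo^{m}\Vert_{1,\Ometwo} \le C\,\Vert \grad(\rtwo^{n}-\rtwo^{m})\Vert_{0,\Ometwo}$, so $\{\rtwo^{n}\}$ is Cauchy in $H^{1}(\Ometwo)$ and converges to some $\rtwo \in E_{0}(\Ometwo)$ (the latter being closed). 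By continuity of the gradient, $\v = \grad \rtwo \in \V(\Ometwo)$, which proves closedness and hence completeness.
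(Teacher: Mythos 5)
Your architecture for the proof --- reduce (i) to a rigidity statement via Rellich--Kondrachov, obtain rigidity from the trace coupling together with the connectedness of $\cl(\Ometwo)$, then deduce (ii) from (i) --- is the natural one, and since the paper itself offers nothing beyond a citation of Lemma 4.4 in \cite{MoralesNaranjo}, your attempt has to be judged on its own merits. Two of your ingredients are sound: a piecewise-constant trace with a genuine jump at a point where two interface arms meet does have divergent Gagliardo $H^{1/2}(\Gamma)$-seminorm, and the chain argument through the finitely many pieces of $\maptwo$ is correct. Even the boundary-vertex caveat you flag can be closed: if $M, M'\in\maptwo$ share a vertex $v$, they occupy disjoint sectors at $v$, and since distinct pieces of the same color cannot share a set of positive length, each of $M$, $M'$ has an edge emanating from $v$ that is interior to $\Omega$ and hence lies in $\Gamma$; so both constants are visible to $\Gamma$ arbitrarily close to $v$ and the no-jump argument applies at every link of the chain.

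The genuine gap is the step you dispose of in a parenthesis: that $E(\Ometwo)$, hence $E_{0}(\Ometwo)$, is closed in $H^{1}(\Ometwo)$, so that the Rellich limit $r$ again lies in $E_{0}(\Ometwo)$ and rigidity may be applied to it. This is not a routine verification (the paper's remark notwithstanding); it is the heart of the lemma, and it fails exactly in the geometries the lemma is meant to cover, namely when the pieces of $\maptwo$ meet only at finitely many points --- which is forced by the bipartite axioms as soon as $\Ometwo$ is disconnected with connected closure, and which is precisely the checkerboard configuration of the paper's own numerical examples. The reason is that a point has zero $H^{1}$-capacity in the plane. Concretely, let $\Ometwo = M_{1}\cup M_{2}$ be two congruent squares meeting at one corner $v$, put $r = +1$ on $M_{1}$ and $r = -1$ on $M_{2}$, and let $\phi_{\delta}$ be the logarithmic cutoff with $\phi_{\delta}=0$ on $B(v,\delta^{2})$, $\phi_{\delta}=1$ off $B(v,\delta)$, so that $\Vert \grad \phi_{\delta}\Vert_{0,\R^{2}}^{2} = 2\pi/\log(1/\delta)\rightarrow 0$ (place analogous cutoffs at the remaining interface endpoints if one insists on the zero-extension reading of \eqref{Eq Decoupling Trace Statement}). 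Each $r\phi_{\delta}$ belongs to $E_{0}(\Ometwo)$: its trace is Lipschitz on every arm of $\Gamma$ and vanishes identically near every junction, so all cross-arm Gagliardo integrals are finite, and its mean is zero by symmetry. Yet $r\phi_{\delta}\rightarrow r$ in $H^{1}(\Ometwo)$ while $r\notin E(\Ometwo)$, so $E_{0}(\Ometwo)$ is not $H^{1}$-closed; worse, $\Vert\grad (r\phi_{\delta})\Vert_{0,\Ometwo}\rightarrow 0$ while $\Vert r\phi_{\delta}\Vert_{1,\Ometwo}\rightarrow\Vert r\Vert_{0,\Ometwo}>0$, so the very kind of sequence your contradiction argument produces violates \eqref{Ineq Control on Space Generating with Isomorphism} for every constant $C$. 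The defect is therefore not a repairable detail of your write-up: under your (entirely natural) reading $H = E_{0}(\Ometwo)$, normed by $H^{1}(\Ometwo)$, statement (i) is simply false whenever $\maptwo$ has more than one piece, and a variant of the same construction (take $q = \log\vert\log\vert x - v\vert\,\vert$ on $M_{1}$ and $q=0$ on $M_{2}$, then cut off) shows that $\grad E(\Ometwo)$ is not closed in $\L^{2}(\Ometwo)$ either, against (ii). Any correct proof must import something not reproduced in this paper --- a trace-augmented norm on $E(\Ometwo)$, positive-length contact between the coupled pieces, or whatever the precise framework of \cite{MoralesNaranjo} provides --- and the lemma as transcribed here, together with the remark that $E(\Ometwo)$ is ``clearly'' closed, deserves the same objection you should raise against your own closedness step.
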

\begin{proof} 
See  \textsc{Lemma} 4.4 in \cite{MoralesNaranjo}.
\qed
\end{proof}
Now we are ready to introduce the functional setting of the problem, define
\begin{subequations}\label{Def function spaces}
\begin{equation}\label{Def spaces of velocities}
\X  \defining \Hdiv(\Omega_{1})\times E(\Omega_{2}).
\end{equation}
\begin{equation}\label{Def spaces of pressures}
\Y \defining \V(\Omega_{2})\times L^{2}(\Omega_{1}) .
\end{equation}
Endowed with their natural norms
\begin{equation}\label{Def norm space of velocities}
\big\Vert [\vone, \qtwo] \big\Vert_{\X}\defining
\big\{\Vert \vone \Vert_{\Hdiv(\Omega_{1})}^{2} 
+ \Vert \qtwo\Vert_{H^{1}(\Omega_{2})}^{2} \big\}^{\tfrac{1}{2}} , 
\end{equation}
\begin{equation}\label{Def norm space of pressures}
\big\Vert [\vtwo, \qone] \big\Vert_{\Y}\defining 
\big\{\Vert \vtwo\Vert_{\L^{2}(\Omega_{2})}^{2} 
+ \Vert  \qone \Vert_{ L^{2}(\Omega_{1})}^{2}\big\}^{\tfrac{1}{2}} .
\end{equation}
\end{subequations}
\begin{remark}\label{Rem notation of the duality product}
\begin{enumerate}[(i)]
\item 
Clearly $\X$ is a Hilbert space, in order to see that $\Y$ is a Hilbert space see \textsc{Lemma} \ref{Th Completeness of V(Omega_2)} above and/or \textsc{Lemma} 4.4 in \cite{MoralesNaranjo}.

\item 
In order to avoid heavy notation, from now on the following notational convention will be adopted
%
%
\begin{equation}\label{Eq duality notation full trace}
\int_{\Gamma} \left(\vone\cdot\n\right) \qtwo\, dS \defining 
\left\langle\vone\cdot\n, \, \qtwo\right\rangle_{\scriptscriptstyle H^{-1/2}(\Gamma), \, H^{1/2}(\Gamma)} \, .
\end{equation}
%
\end{enumerate}
\end{remark}
%
%
%
%
From now on, we assume that $F\in L^{2}(\Omega)$, $\g\in \L^{2}(\Omega_{2})$, $\stress\in H^{1/2}(\Gamma)$ and $\flux \in H^{-1/2}(\Gamma)$. Finally, the primal-dual mixed formulation for the \textsc{Problem} \eqref{Eq porous media strong decomposed} with interface balance conditions \eqref{Eq interface balance conditions} is given by 
\begin{subequations}\label{Pblm weak continuous solution}
%
\begin{multline}\label{Pblm weak continuous solution 1}
\text{Find}\;
\big([\uone, \ptwo],[\utwo, \pone]\big)\in \X\times \Y : \quad
\int_{\Omega_1}  a \, \uone \cdot \vone 
+ \int_{\Gamma}  \beta \, \ptwo \, \qtwo \, dS 
- \int_{\,\Gamma}\left(\uone\cdot\n\right) \qtwo\, d S \\
+\int_{\,\Gamma}\ptwo \left(\vone\cdot\n\right)\, d S 
- \int_{\Omega_1} \pone \,\div\vone\,
- \int_{\Omega_{2}} \utwo \cdot \grad \qtwo\, \\
= \int_{\Omega_{2}}F\, \qtwo - \int_{\Omega_{1}} \g \cdot \vone
+ \int_{\Gamma} \stress\, (\vone\cdot\n)\, dS 
- \int_{\Gamma} \flux\, \qtwo\, dS ,
\end{multline}
\begin{multline}\label{Pblm weak continuous solution 2}
\int_{\Omega_1}\div\uone\, \qone  
+ \int_{\Omega_2} \grad \ptwo \cdot\vtwo 
+ \int_{\Omega_2}  a \, \utwo \cdot \vtwo\,
= \int_{\Omega_{1}}F\, \qone 
- \int_{\Omega_{2}} \g \cdot \vtwo\, ,
\\
 \text{ for all } 
\big([\vone, \qtwo],[\vtwo, \qone]\big)\in \X\times \Y .
\end{multline}
\end{subequations}
Define the operators $\A: \X\rightarrow \X '$, $\B:\X\rightarrow \Y\,'$ and $\C:\Y\rightarrow \Y\,'$ by
\begin{subequations}\label{Def Operators Weak Continuous}
\begin{equation}\label{Def Regular Actions Operator }
\A [\vone, \qtwo],\big([\wone, \rtwo]\big)
\defining  
\int_{\Omega_1}  a \, \vone \cdot \wone 
+ \int_{\Gamma}  \beta \, \qtwo \, \rtwo \, dS 
- \int_{\,\Gamma}\left(\vone\cdot\n\right) \rtwo\, d S
+\int_{\,\Gamma}\qtwo \left(\wone\cdot\n\right)\, d S ,
\end{equation}
\begin{equation}\label{Def Mixed Operator Continuous}
\B[\vone,\,\qtwo], \big([\wtwo,\,\rone]\big)\defining 
 \int_{\Omega_1}  \div\vone \, \rone
+ \int_{\Omega_{2}} \grad\qtwo \cdot \wtwo ,\,
\end{equation}
\begin{equation}\label{Def Non Regular Actions non-negative Operator}
\C [\vtwo,\,\qone] \big([\wtwo,\,\rone]\big)\defining  \int_{\Omega_2} a \, \vtwo \cdot \wtwo .
\end{equation}
\end{subequations}
Hence, the \textsc{Problem} \eqref{Pblm weak continuous solution} is equivalent to
\begin{equation}\label{Pblm operators weak continuous solution}
\begin{split}
\text{Find a pair}\; 
\big([\uone,\,\ptwo], [\utwo,\,\pone]\big)\in \X\times \Y: \quad 
\A[\uone,\,\ptwo] + \B ' [\utwo,\,\pone]  = F_{1}\quad \text{in}\; \X ' ,\\
- \B [\uone,\,\ptwo]  + \C [\utwo,\,\pone] = F_{2} \quad \text{in}\; \Y ' .
\end{split}
\end{equation}
Here $F_{1}\in \X '$ and $F_{2} \in \Y '$ are the functionals defined by the right hand side of \eqref{Pblm weak continuous solution 1} and \eqref{Pblm weak continuous solution 2} respectively. 
In order to satisfy the required ellipticity conditions for the operator $\A$, some extra hypotheses on the coefficients become necessary.
\begin{hypothesis}\label{Hyp non null local storage coefficient}
   It will be assumed that coefficients of storage exchange $\beta: \Gamma\rightarrow [0, \infty)$ and porous medium resistance $a: \Omega\rightarrow (0,\infty)$, satisfy that $\beta\in L^{\infty}(\Gamma)$, $\Vert \beta \, \ind_{\Gamma} \Vert_{L^{1} (\Gamma) }  > 0$ and $a\in L^{\infty}(\Omega)$, $\big\Vert \dfrac{1}{a}\big\Vert_{L^{\infty}(\Omega)}>0 $ respectively.
\end{hypothesis}
\begin{theorem}\label{Th Well Posedness Problem}
Let $\Omega$ be a polygonal region and let $\map$ be a bipartite map, then if the \textsc{Hypothesis \ref{Hyp non null local storage coefficient}} is satisfied, the \textsc{Problem} \eqref{Pblm weak continuous solution} is well-posed.
\end{theorem}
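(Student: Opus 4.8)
The natural strategy is to verify that the operators $\A$, $\B$, $\C$ defined in \eqref{Def Operators Weak Continuous} satisfy the three hypotheses of the classical Babuska-Brezzi result, \textsc{Theorem} \ref{Th well posedeness mixed formulation classic}, and then invoke that theorem directly to obtain existence, uniqueness, and the stability estimate \eqref{mix-est}. Continuity of all three bilinear forms should be essentially a bookkeeping exercise: the volume integrals are controlled by Cauchy-Schwarz together with the $L^{\infty}$ bounds on $a$ and $\beta$ from \textsc{Hypothesis} \ref{Hyp non null local storage coefficient}, while the interface integrals $\int_{\Gamma}(\vone\cdot\n)\rtwo\,dS$ are handled via the duality pairing convention \eqref{Eq duality notation full trace}, using the normal-trace bound $\Vert \vone\cdot\n\Vert_{-1/2,\Gamma}\le C\Vert\vone\Vert_{\Hdiv(\Omega_1)}$ and the fact that elements of $E(\Omega_{2})$ have traces in $H^{1/2}(\Gamma)$ by its very definition.

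The heart of the matter is condition (i): showing that $\A$ is non-negative and $\X$-coercive on $\ker(\B)$. I would first compute $\A[\vone,\qtwo]([\vone,\qtwo])=\int_{\Omega_1}a|\vone|^2+\int_{\Gamma}\beta\,\qtwo^2\,dS$, since the two interface cross-terms cancel when tested against the same argument; this immediately gives non-negativity. For coercivity I must bound $\Vert\vone\Vert_{\Hdiv(\Omega_1)}^2+\Vert\qtwo\Vert_{1,\Omega_2}^2$ from above by this quadratic form, but only for pairs in $\ker(\B)$. The kernel condition forces $\div\vone=0$ in $\Omega_1$ (so the $\Hdiv$ norm reduces to the $\L^2$ norm, controlled by $\int_{\Omega_1}a|\vone|^2$ via $\Vert 1/a\Vert_{L^\infty}$) and forces $\grad\qtwo=0$ tested against all of $\V(\Omega_2)=\grad(E(\Omega_2))$, whence $\grad\qtwo=0$ and $\qtwo$ is constant on the connected set $\Omega_2$. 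Here the connectivity from \textsc{Hypothesis} \ref{Hyp Geometry of the Domain}(ii) is essential, as is the condition $\Vert\beta\ind_\Gamma\Vert_{L^1(\Gamma)}>0$: a nonzero constant $\qtwo$ contributes $\qtwo^2\int_\Gamma\beta\,dS>0$, so the interface term alone controls the constant, and then the Poincar\'e-type inequality \eqref{Ineq Control on Space Generating with Isomorphism} of \textsc{Lemma} \ref{Th Completeness of V(Omega_2)} lifts this to control of $\Vert\qtwo\Vert_{1,\Omega_2}$.

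Condition (iii) is immediate, since $\C[\vtwo,\qone]([\wtwo,\rone])=\int_{\Omega_2}a\,\vtwo\cdot\wtwo$ is visibly symmetric and non-negative. That leaves the inf-sup condition (ii) for $\B$, which I expect to be the main obstacle. Given $[\vtwo,\qone]\in\Y=\V(\Omega_2)\times L^2(\Omega_1)$, I must exhibit a test pair $[\vone,\qtwo]\in\X$ making $\B[\vone,\qtwo]([\vtwo,\qone])=\int_{\Omega_1}\div\vone\,\qone+\int_{\Omega_2}\grad\qtwo\cdot\vtwo$ large relative to the product of norms. The two summands decouple across the subdomains, so I would treat them separately: for the first, standard surjectivity of the divergence from $\Hdiv(\Omega_1)$ onto $L^2(\Omega_1)$ (with appropriate boundary handling on $\partial\Omega_1\cap\partial\Omega$) yields a $\vone$ with $\div\vone=\qone$ and controlled norm; for the second, since $\vtwo=\grad\qtwo_0$ for some $\qtwo_0\in E(\Omega_2)$ by the definition \eqref{Def Integrals on the trace} of $\V(\Omega_2)$, choosing $\qtwo=\qtwo_0$ gives $\int_{\Omega_2}|\vtwo|^2=\Vert\vtwo\Vert_{0,\Omega_2}^2$, and \eqref{Ineq Control on Space Generating with Isomorphism} again converts this into control by the $H^1$ norm. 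Combining the two choices into a single test element and verifying the norms balance correctly is the delicate step, and I would take care that the interface traces remain consistent so that $[\vone,\qtwo]$ genuinely lies in $\X$.
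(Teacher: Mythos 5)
Your plan is correct and is, in substance, the proof the paper relies on: the paper does not argue this theorem in-text but cites \textsc{Theorem} 4.8 of \cite{MoralesNaranjo}, and that argument --- mirrored explicitly in the paper's discrete counterparts, \textsc{Lemmas} \ref{Th FEM inf sup condition} and \ref{Th FEM coercivity of A on ker B} and \textsc{Theorem} \ref{Th FEM Well Posedness Problem} --- is exactly the verification of the hypotheses of \textsc{Theorem} \ref{Th well posedeness mixed formulation classic} that you outline: cancellation of the interface cross-terms in $\A$, the kernel characterization $\div\vone=0$, $\grad\qtwo=\0$, control of the resulting constant by $\int_{\Gamma}\beta\,\qtwo^{2}\,dS$, and the inf-sup condition obtained from surjectivity of the divergence on $\Omega_{1}$ together with the gradient structure of $\V(\Omega_{2})$.

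Two steps need tightening before this is airtight. (a) In the inf-sup argument you set $\qtwo=\qtwo_{0}$ where $\vtwo=\grad\qtwo_{0}$ and then invoke \eqref{Ineq Control on Space Generating with Isomorphism}; that inequality holds only on the zero-mean subspace $E_{0}(\Omega_{2})$, so you must take $\qtwo=\qtwo_{0}-\vert\Omega_{2}\vert^{-1}\int_{\Omega_{2}}\qtwo_{0}$ (which leaves $\grad\qtwo$, hence the pairing with $\vtwo$, unchanged), exactly as the paper does discretely in \textsc{Lemma} \ref{Th FEM inf sup condition}. (b) Your coercivity step asserts that $\Omega_{2}$ is connected. \textsc{Hypothesis} \ref{Hyp Geometry of the Domain}(ii) only makes $\cl(\Omega_{2})$ connected, and in the paper's own examples $\Omega_{2}$ is disconnected (two squares meeting at the origin). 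Thus $\grad\qtwo=\0$ only yields a constant on each connected component, and with different constants the term $\int_{\Gamma}\beta\,\qtwo^{2}\,dS$ need not control $\Vert\qtwo\Vert_{1,\Omega_{2}}$ (take $\beta$ supported on the boundary of one component only, which \textsc{Hypothesis} \ref{Hyp non null local storage coefficient} permits). What closes this hole is the trace condition in the definition \eqref{Eq Decoupling Trace Statement} of $E(\Omega_{2})$: the trace of $\qtwo$ must lie in $H^{1/2}(\Gamma)$ globally, and a piecewise-constant trace with a genuine jump at a point where the closures of two components meet (such points exist by \textsc{Hypothesis} \ref{Hyp Geometry of the Domain}(ii)) is not in $H^{1/2}$; hence all the constants coincide and your computation proceeds. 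This interplay between $E(\Omega_{2})$ and the connectivity hypothesis is the one genuinely non-routine ingredient of the well-posedness proof and should be made explicit.
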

\begin{proof}
See \textsc{Theorem 4.8} in \cite{MoralesNaranjo}.
\qed
\end{proof}
We close this section recalling the next result on recovering the strong problem from the weak variational formulation \eqref{Pblm weak continuous solution}. 
\begin{theorem}\label{Th weak formulation of continuos version}
The solution of the weak variational \textsc{Problem} \eqref{Pblm weak continuous solution} is a strong solution of the \textsc{Problem} \eqref{Eq porous media strong decomposed} with the forcing gravitation term $\g$ in the \textsc{Equation} \eqref{Eq Darcy Strong decomposed 0} replaced by $P\g$, which denotes its orthogonal projection onto the space $\V(\Omega_{2})$. In particular if $\g\ind_{\Omega_{2}} \in \V(\Omega_{2})$ the weak solution is exactly the strong solution.
\end{theorem}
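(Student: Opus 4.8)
The plan is to recover the strong system from the variational identities by testing \eqref{Pblm weak continuous solution 1}--\eqref{Pblm weak continuous solution 2} against functions of prescribed support: first isolating the interior equations on $\Omeone$ and $\Ometwo$, then peeling off the boundary and interface relations through Green's formula. I would carry this out one block of test functions at a time.

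For the interior equations, I would first set $\qtwo=0$ and let $\qone$ range over $L^{2}(\Omeone)$ in \eqref{Pblm weak continuous solution 2}, which forces $\int_{\Omeone}(\div\uone-F)\,\qone=0$ and hence $\div\uone=F$, recovering \eqref{Eq conservative strong decomposed 0}. Next, choosing in \eqref{Pblm weak continuous solution 1} a test $\qtwo\in E(\Ometwo)$ compactly supported in $\Ometwo$ and integrating $\int_{\Ometwo}\utwo\cdot\grad\qtwo$ by parts (all interface and outer-boundary traces vanishing) yields $\div\utwo=F$, i.e. \eqref{Eq conservative strong decomposed 1}. Finally, with $\qtwo=0$ and $\vone$ compactly supported in $\Omeone$, identity \eqref{Pblm weak continuous solution 1} reduces to $\int_{\Omeone}a\,\uone\cdot\vone-\int_{\Omeone}\pone\,\div\vone=-\int_{\Omeone}\g\cdot\vone$, which states the Darcy law \eqref{Eq Darcy Strong decomposed 0} in the distributional sense; since $a\,\uone+\g\in\L^{2}(\Omeone)$ this simultaneously shows $\grad\pone\in\L^{2}(\Omeone)$, so $\pone\in H^{1}(\Omeone)$, the regularity that legitimizes every trace used below.

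With $\pone\in H^{1}(\Omeone)$ available, I would return to \eqref{Pblm weak continuous solution 1} with a general $\vone\in\Hdiv(\Omeone)$ and $\qtwo=0$, integrate $-\int_{\Omeone}\pone\,\div\vone$ by parts, and use the Darcy law just obtained to cancel the volume integrals. Invoking the orientation convention \eqref{Def normal vector}, under which $\n=\outer$ on $\partial\Omeone$, the leftover boundary integral is $\int_{\Gamma}(\ptwo-\pone-\stress)(\vone\cdot\n)\,dS-\int_{\partial\Omeone\cap\partial\Omega}\pone\,(\vone\cdot\n)\,dS=0$; supporting the normal trace $\vone\cdot\n$ first on $\partial\Omeone\cap\partial\Omega$ and then on $\Gamma$ gives the drained condition \eqref{Eq Drained Condition decomposed} and the normal-stress balance \eqref{Eq normal stress balance}. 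Symmetrically, the identity produced by the $\Ometwo$-interior test above, after $\div\utwo=F$ removes its volume part, collapses to $\int_{\Gamma}(\beta\,\ptwo-\uone\cdot\n+\utwo\cdot\n+\flux)\,\qtwo\,dS-\int_{\partial\Ometwo\cap\partial\Omega}(\utwo\cdot\n)\,\qtwo\,dS=0$, and localizing $\qtwo$ near $\partial\Omega$ and near $\Gamma$ delivers the no-flux condition \eqref{Eq Non-Flux Condition decomposed} and the normal-flux balance \eqref{Eq normal flux balance}.

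The remaining and most delicate step is \eqref{Eq Darcy Strong decomposed 1}. Setting $\qone=0$ in \eqref{Pblm weak continuous solution 2} gives $\int_{\Ometwo}(a\,\utwo+\grad\ptwo+\g)\cdot\vtwo=0$ for all $\vtwo\in\V(\Ometwo)$, i.e. the residual $a\,\utwo+\grad\ptwo+\g$ is $\L^{2}(\Ometwo)$-orthogonal to $\V(\Ometwo)$. In contrast with $\Omeone$, the velocity test space here is not $\L^{2}(\Ometwo)$ but only the gradient space $\V(\Ometwo)=\grad(E(\Ometwo))$, so this identity controls only the $\V(\Ometwo)$-component of the residual; writing $\g=P\g+(I-P)\g$ with $(I-P)\g\in\V(\Ometwo)^{\perp}$ and using $\grad\ptwo\in\V(\Ometwo)$, the orthogonality reads $P(a\,\utwo)+\grad\ptwo+P\g=0$. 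Identifying this with the Darcy law \eqref{Eq Darcy Strong decomposed 1} in which $\g$ is replaced by $P\g$---that is, verifying that the component of the residual unseen by the test space is carried by the data $\g$ alone, so that the velocity term is unaffected by the projection---is exactly where I expect the main obstacle to lie, and it is the mechanism behind both the substitution $\g\mapsto P\g$ and the final clause: when $\g\big\vert_{\Ometwo}\in\V(\Ometwo)$ one has $P\g=\g$, the projection defect disappears, and the weak solution solves \eqref{Eq porous media strong decomposed} exactly. Alongside this, the careful bookkeeping of the normal orientations across $\Gamma$ and the rigorous reading of the interface integrals as the $H^{-1/2}$--$H^{1/2}$ pairings of Remark \ref{Rem notation of the duality product} are the other technical points to be handled.
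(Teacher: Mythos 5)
You cannot be checked against an in-paper argument here: the paper proves this theorem by citation only (``See Theorem 4.9 in \cite{MoralesNaranjo}''). Your localization-and-Green's-formula route is the standard one for such recovery statements, and most of it is executed correctly: the test-function choices, the regularity upgrades ($\pone\in H^{1}(\Omeone)$ from the distributional Darcy law on $\Omeone$, and $\utwo\in\Hdiv(\Ometwo)$ from $\div\utwo=F$), the orientation bookkeeping of \eqref{Def normal vector}, and the recovery of \eqref{Eq conservative strong decomposed 0}, \eqref{Eq conservative strong decomposed 1}, \eqref{Eq Drained Condition decomposed}, \eqref{Eq Non-Flux Condition decomposed}, \eqref{Eq normal stress balance} and \eqref{Eq normal flux balance} all check out. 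You are also right to read the theorem's reference to \eqref{Eq Darcy Strong decomposed 0} as meaning \eqref{Eq Darcy Strong decomposed 1}: the projection onto $\V(\Omega_{2})$ is meaningful only for the momentum equation on $\Ometwo$, while on $\Omeone$ the test fields $\vone$ range over a dense subspace of $\L^{2}(\Omeone)$, so Darcy's law there holds with $\g$ itself.

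The step you flag as ``the main obstacle'' is, however, a genuine gap, and it does not go away. From \eqref{Pblm weak continuous solution 2} with $\qone=0$ you correctly obtain that $a\,\utwo+\grad\ptwo+\g$ is $\L^{2}(\Ometwo)$-orthogonal to $\V(\Omega_{2})$, hence
\begin{equation*}
P(a\,\utwo)+\grad\ptwo+P\g=\boldsymbol{0} .
\end{equation*}
The theorem's conclusion $a\,\utwo+\grad\ptwo+P\g=\boldsymbol{0}$ is equivalent to the additional statement $(I-P)(a\,\utwo)=\boldsymbol{0}$, i.e. $a\,\utwo\in\V(\Omega_{2})$, and nothing derived so far implies it. Writing $\utwo=\grad\xi$ with $\xi\in E(\Omega_{2})$, the field $a\,\grad\xi$ need not even be curl-free for a variable coefficient (for smooth $a$ its curl is $\partial_{x}a\,\partial_{y}\xi-\partial_{y}a\,\partial_{x}\xi$), let alone the gradient of an $E(\Omega_{2})$ potential; so for a general $a\in L^{\infty}(\Omega)$ satisfying \textsc{Hypothesis}~\ref{Hyp non null local storage coefficient}, only the projected law displayed above is recoverable. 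The identification you need holds precisely when $a\,\V(\Omega_{2})\subseteq\V(\Omega_{2})$ --- e.g. $a$ constant on $\Ometwo$, which is the case in every numerical example of the paper --- and the final clause of the theorem inherits the same defect: $\g\ind_{\Omega_{2}}\in\V(\Omega_{2})$ gives $P\g=\g$ but does not remove $(I-P)(a\,\utwo)$. To close the proof you must either make such a hypothesis on $a$ explicit, or weaken the recovered $\Ometwo$-Darcy equation to its projected form; what you stopped at is not a technicality but exactly the assumption the theorem leaves implicit.
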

\begin{proof} See \textsc{Theorem 4.9} in \cite{MoralesNaranjo}.
   \qed
\end{proof}
%
%
%
%
%
%
%
%
%
%
%
%
%
%
%
%
%
\section{The Discretization of the Problem }
%
%
%
%
In this section we present a viable discretization of the Problem \eqref{Pblm weak continuous solution} in the two dimensional case, from the theoretical point of view. We start introducing the discrete function spaces, we will denote by $P_{\ell}(K)$ the polynomials of order $\ell$ on the triangle $K$ and $\mathbf{P}_{\ell}(K) = (P_{\ell}(K))^{2}$. As usual, $\mathbf{RT}_{\ell}(K)$ indicates the Raviart-Thomas finite element of degree $\ell$ on the triangle $K$. From now on it will be assumed that the domain $\Omega$ satisfies \textsc{Hypothesis} \ref{Hyp Geometry of the Domain} and that any triangulation $\triang$ of analysis is consistent with the map $\map$, as introduced in \textsc{Definition} \ref{Def Consistent Grid}.  Hence, for a fixed consistent triangulation $\triang$ with size $h \defining \max \{\text{diameter}(K): T\in \triang \}$ we denote 
\begin{subequations}\label{Def Global Finite Dimensional Spaces}
\begin{equation}\label{Def Raviart-Thomas Global} 
\mathbf{RT}_{0}(\Omega_{1}, \triang)  \defining \big\{\v\in \Hdiv(\Omega_{1}): 
\v\vert_{K}\in   \mathbf{RT}_{0}(K) , \; \; \text{for all } \; K\in \triang , \; K\subseteq \Omega_{1} \big\}\, ,
\end{equation}
\begin{multline}\label{Def Piecewise Linear Pressure Global} 
\mathcal{Q}(\Omega_{ 2 }, \triang)  \defining \big\{\qtwo \in H^{1}(\Omega_{ 2 }): 
\qtwo = \xi\vert_{\Omega_{2}} \; \; \text{for some }\; \xi\in H^{1}(\Omega)\; \; \text{and }\\
\qtwo\vert_{K}\in   P_{1}(K) , \; \; \text{for all } \; \, K\in \triang , \; K\subseteq \Omega_{2} \big\} \, , 
\end{multline}
\begin{equation}\label{Def Piecewise Constant Velocity Global} 
\grad \mathcal{Q}(\Omega_{2 }, \triang) \defining 
\big\{\vtwo\in \mathbf{P}_{0}(\Omega_{2 }): 
\vtwo = \grad \qtwo \; \;  \text{for some } \; \qtwo\in \mathcal{Q}(\Omega_{2}, \triang) \big\} \, , 
\end{equation}
\begin{align}\label{Def Piecewise Constant Pressure Global} 
\mathcal{Q}(\Omega_{ 1 }, \triang) & \defining 
\big\{\qone \in L^{2}(\Omega_{ 1}): 
\qone \vert_{K}\in   P_{0}(K) , \; \; \text{for all } \; \, K\in \triang , \; K\subseteq \Omega_{1} \big\} \,  .
\end{align}
\end{subequations}
Whenever the triangulation $\triang$ is clear from the context, we simply write $\mathbf{RT}_{0}(\Omega_{1}) = \mathbf{RT}_{0}(\Omega_{1}, \triang)$, $\grad \mathcal{Q}(\Omega_{2 })  = \grad \mathcal{Q}(\Omega_{2 }, \triang)$ and $\mathcal{Q}(\Omega_{\ell})  = \mathcal{Q}(\Omega_{\ell}, \triang)$ for $\ell = 1, 2$. Notice that $\mathbf{RT}_{0}(\Omega_{1})  \times \mathcal{Q}(\Omega_{2}) \subseteq \X$ and $\grad \mathcal{Q}(\Omega_{2 })\times \mathcal{Q}(\Omega_{1} )\subseteq \Y$. Define the following discrete spaces
\begin{subequations}\label{Def finite function spaces}
\begin{equation}\label{Def finite spaces of velocities}
\X_{h}  \defining \mathbf{RT}_{0}(\Omega_{1})  \times \mathcal{Q}(\Omega_{2}) ,
\end{equation}
\begin{equation}\label{Def finite spaces of pressures}
\Y_{h} \defining \grad \mathcal{Q}(\Omega_{2})  \times \mathcal{Q}(\Omega_{1}) ,
\end{equation}
\end{subequations}
endowed $\X_{h}$, $\Y_{h}$ with the norms $\Vert \cdot \Vert_{\X}$ and $\Vert \cdot \Vert_{\Y}$ respectively. The discrete operators $\A_{h}:\X_{h}\rightarrow \X_{h}'$, $\B_{h}:\X_{h}\rightarrow \Y_{h}'$ and $\C_{h}:\Y_{h}\rightarrow \Y_{h}'$ are defined by the respective restriction of the operators $\A$, $\B$ and $\C$ introduced in \eqref{Def Regular Actions Operator }, \eqref{Def Mixed Operator Continuous} and \eqref{Def Non Regular Actions non-negative Operator} i.e., 
\begin{subequations}\label{Def FEM operators}
\begin{align}\label{Def FEM Regular Actions Operator }
& 
\A_{h}[\vone, \qtwo]([\wone, \rtwo]) \defining \A[\vone, \qtwo]([\wone, \rtwo]) ,
& 
& \text{for all }\, 
[\vone, \qtwo], [\wone, \rtwo] \in \X_{h}.
\end{align}
\begin{align}\label{Def FEM Mixed Operator Continuous}
& 
\B_{h}[\vone,\,\qtwo], \big([\wtwo,\,\rone]\big)\defining 
\B[\vone,\,\qtwo] \big([\wtwo,\,\rone]\big), &
&\text{for all }\, 
[\vone, \qtwo]\in \X_{h}, [\wtwo,\,\rone]\in \Y_{h}.
\end{align}
\begin{align}\label{Def FEM Non Regular Actions non-negative Operator}
& \C_{h} [\vtwo,\,\qone] \big([\wtwo,\,\rone]\big)\defining  
\C [\vtwo,\,\qone] \big([\wtwo,\,\rone]\big) , &
& \text{for all }\, 
[\vtwo, \qone], [\wtwo,\,\rone]\in \Y_{h}.
\end{align}
\end{subequations}
The discretization of \textsc{Problem} \eqref{Pblm operators weak continuous solution} is given by 
\begin{equation}\label{Pblm FEM operators weak continuous solution}
\begin{split}
\text{Find a pair}\; 
\big([\uoneh,\,\ptwoh], [\utwoh,\,\poneh]\big)\in \X_{h}\times \Y_{h}: \quad 
\A[\uoneh,\,\ptwoh] + \B ' [\utwoh,\,\poneh]  = F_{1}\quad \text{in}\; \X_{h} ' ,\\
- \B [\uoneh,\,\ptwoh]  + \C [\utwoh,\,\poneh] = F_{2} \quad \text{in}\; \Y_{h} ' .
\end{split}
\end{equation}
where $F_{1}\in \X_{h} '$ and $F_{2} \in \Y_{h} '$ are known functionals. We are to prove that the \textsc{Problem} \eqref{Pblm FEM operators weak continuous solution} above is well-posed, verifying that the operators $\A_{h}$, $\B_{h}$ and $\C_{h}$ satisfy the hypotheses of \textsc{Theorem} \ref{Th well posedeness mixed formulation classic}.
Before proving the inf-sup condition of the operator $\B_{h}$ we recall a well-known result
\begin{theorem}\label{Th Local Solution of the Dual Formulation}
Let $\mathbf{RT}_{0}(\Omega_{1}, \triang)$, $\mathcal{Q}(\Omega_{1}, \triang)$ be as defined in \eqref{Def Raviart-Thomas Global} and \eqref{Def Piecewise Constant Pressure Global} respectively. Then, for every $\qone \in \mathcal{Q}(\Omega_{1}, \triang)$ there exist $\vone\in \mathbf{RT}_{0}(\Omega_{1}, \triang)$ and a constant $C>0$ depending only on $\Omega_{1}$ such that $\div \vone = \qone$ and $\Vert \vone \Vert_{\Hdiv(\Omega_{1})} \leq C \Vert \qone \Vert_{L^{2}(\Omega_{1})}$.
\end{theorem}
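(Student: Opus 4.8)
The plan is to recognize the statement as the surjectivity of the discrete divergence $\div : \mathbf{RT}_{0}(\Omega_{1},\triang) \to \mathcal{Q}(\Omega_{1},\triang)$ admitting a uniformly bounded right inverse; equivalently, this is the classical Raviart--Thomas inf-sup property of the pair $(\rt, P_{0})$. Rather than construct $\vone$ edge by edge, I would establish it through the Fortin operator, i.e. the canonical $\rt$ interpolation operator together with its commuting-diagram property. Concretely, I would first lift $\qone$ to a continuous vector field at the $H^{1}$ level, then push that field into the discrete space with the interpolant, and finally use the commuting property to transfer the divergence identity down to the mesh.

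Step one (continuous lifting). Given $\qone\in\mathcal{Q}(\Omega_{1},\triang)\subseteq L^{2}(\Omega_{1})$, I would produce $\w\in H^{1}(\Omega_{1})^{2}$ with $\div\w=\qone$ and $\Vert\w\Vert_{1,\Omega_{1}}\le C\,\Vert\qone\Vert_{0,\Omega_{1}}$. The cleanest route avoids corner regularity altogether: embed $\Omega_{1}$ in an open ball $B$, extend $\qone$ by zero to $\tilde q\in L^{2}(B)$, take $\psi$ the Newtonian potential of $\tilde q$ so that $\Delta\psi=\tilde q$ with $\psi\in H^{2}(B)$ and $\Vert\psi\Vert_{2,B}\le C\,\Vert\tilde q\Vert_{0,B}=C\,\Vert\qone\Vert_{0,\Omega_{1}}$, and set $\w\defining\grad\psi\big\vert_{\Omega_{1}}$. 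Then $\div\w=\qone$ on $\Omega_{1}$ and $\w\in H^{1}(\Omega_{1})^{2}$ with the desired bound. This requires no boundary condition on $\w$, which is consistent with the fact that $\mathbf{RT}_{0}(\Omega_{1},\triang)$ carries no constraint on the normal trace along $\partial\Omega_{1}$, and the constant depends only on $\Omega_{1}$ through the choice of $B$.

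Step two (interpolation and commuting diagram). Let $\Pi_{h}$ denote the canonical $\rt$ interpolation operator, defined edgewise by the normal-flux moments, and let $P_{h}$ be the $L^{2}(\Omega_{1})$-orthogonal projection onto $\mathcal{Q}(\Omega_{1},\triang)$. Because $\w\in H^{1}(\Omega_{1})^{2}$ with $\div\w\in L^{2}(\Omega_{1})$, the interpolant $\vone\defining\Pi_{h}\w$ is well defined, lies in $\Hdiv(\Omega_{1})$ by the single-valuedness of edge fluxes, and satisfies both the boundedness estimate $\Vert\Pi_{h}\w\Vert_{\Hdiv(\Omega_{1})}\le C\,\Vert\w\Vert_{1,\Omega_{1}}$ and the commuting identity $\div(\Pi_{h}\w)=P_{h}(\div\w)$, which are standard for the lowest-order Raviart--Thomas element on a consistent triangulation. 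Since $\div\w=\qone$ is already piecewise constant on $\triang$ restricted to $\Omega_{1}$, we have $P_{h}\qone=\qone$, hence $\div\vone=\qone$; chaining the two bounds gives $\Vert\vone\Vert_{\Hdiv(\Omega_{1})}\le C\,\Vert\w\Vert_{1,\Omega_{1}}\le C\,\Vert\qone\Vert_{0,\Omega_{1}}$, as claimed.

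The main obstacle is concentrated entirely in the continuous lifting: one must obtain an $H^{1}$ right inverse of the divergence (not merely an $\Hdiv$ one) with a domain-only constant, because the $\rt$ interpolant is not defined on a generic $\Hdiv$ field, and corner singularities on a nonconvex polygonal $\Omega_{1}$ would spoil a naive Dirichlet--Poisson construction. The Newtonian-potential-on-a-ball device sidesteps this by solving on a smooth auxiliary domain and restricting, so that neither the possible nonconvexity nor the possible disconnectedness of $\Omega_{1}$ enters the argument; everything else, namely the boundedness and the commuting diagram for $\Pi_{h}$, is classical and can be quoted from Raviart--Thomas.
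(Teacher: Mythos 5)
Your proposal is correct and is essentially the same argument as the paper's: the paper's ``proof'' is only a citation of Braess (\textsc{Lemma} 5.4, Chapter III), and the proof given there is precisely your lifting-plus-commuting-diagram construction --- an $H^{1}$ right inverse of the divergence obtained by extending $\qone$ by zero to a smooth/convex enclosing domain and solving a Poisson problem (your Newtonian-potential-on-a-ball device is a cosmetic variant of this), followed by the canonical $\rt$ interpolant $\Pi_{h}$, whose commuting property $\div(\Pi_{h}\w) = P_{h}(\div\w)$ and the fact that $P_{h}\qone = \qone$ for piecewise constants yield $\div\vone = \qone$ with the stability bound. The only caveat, shared with the theorem statement itself, is that the stability constant of $\Pi_{h}$ also depends on the shape regularity (minimum angle) of $\triang$ and not on $\Omega_{1}$ alone, so ``depending only on $\Omega_{1}$'' should implicitly be read as holding uniformly over a shape-regular family of consistent triangulations.
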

\begin{proof}
See \textsc{Lemma} 5.4, Chapter III, pg 151 in \cite{BraessFEM}.
\qed
\end{proof}
\begin{lemma}\label{Th FEM inf sup condition}
The operator $\B_{h}: \X_{h} \rightarrow \Y_{h} '$ defined in \textsc{Equation} \eqref{Def FEM Mixed Operator Continuous} is continuous and satisfies the $\inf$-$\sup$ condition i.e., there exists a constant $C>0$ depending only on the map $\map$ such that for every $[\wtwo, \rone]\in\Y_{h} $ there exists $[\vone, \qtwo]\in\X_{h}$ satisfying
\begin{equation}\label{Ineq FEM existence of beta}
 \B_{h}\,[\vone, \qtwo] ([\wtwo, \rone])\geq 
C\, \big\Vert [\vone, \qtwo]\big\Vert_{\X_{h}} \big\Vert [\wtwo, \rone] \big\Vert_{\Y_{h}}
\, .
\end{equation}
Moreover, the constant $C>0$ is independent from $[\wtwo, \rone]$ and the triangulation $\triang$.
\end{lemma}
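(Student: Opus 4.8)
The plan is to exploit the block-diagonal structure of $\B_{h}$. Observe that in \eqref{Def FEM Mixed Operator Continuous} the term $\int_{\Omega_1}\div\vone\,\rone$ couples only the Raviart--Thomas component $\vone$ with the piecewise-constant pressure $\rone$, while $\int_{\Omega_2}\grad\qtwo\cdot\wtwo$ couples only the $H^{1}$-pressure $\qtwo$ with $\wtwo$. The two blocks are therefore entirely decoupled. Given an arbitrary $[\wtwo,\rone]\in\Y_{h}$, I would construct the two components of a test function $[\vone,\qtwo]\in\X_{h}$ independently and then recombine. Continuity of $\B_{h}$ is immediate, since it is the restriction of the continuous operator $\B$ to the finite-dimensional subspace $\X_{h}\times\Y_{h}$.

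For the $\Omega_{1}$ block I would invoke \textsc{Theorem} \ref{Th Local Solution of the Dual Formulation}: for the given $\rone\in\mathcal{Q}(\Omega_{1})$ there exists $\vone\in\mathbf{RT}_{0}(\Omega_{1})$ with $\div\vone=\rone$ and $\Vert\vone\Vert_{\Hdiv(\Omega_{1})}\le C_{1}\Vert\rone\Vert_{0,\Omega_{1}}$, where $C_{1}$ depends only on $\Omega_{1}$. With this choice $\int_{\Omega_1}\div\vone\,\rone=\Vert\rone\Vert_{0,\Omega_1}^{2}$.

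For the $\Omega_{2}$ block, since $\wtwo\in\grad\mathcal{Q}(\Omega_{2})$ there is by definition $\tilde q\in\mathcal{Q}(\Omega_{2})$ with $\grad\tilde q=\wtwo$; I would take $\qtwo$ to be the zero-mean representative $\tilde q-\vert\Omega_{2}\vert^{-1}\int_{\Omega_{2}}\tilde q$. Subtracting a constant preserves both piecewise-linearity and the property of being the restriction of an $H^{1}(\Omega)$ function, so $\qtwo\in\mathcal{Q}(\Omega_{2})\subseteq E(\Omega_{2})$, and moreover $\qtwo\in E_{0}(\Omega_{2})$ with $\grad\qtwo=\wtwo$ unchanged. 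Then $\int_{\Omega_2}\grad\qtwo\cdot\wtwo=\Vert\wtwo\Vert_{0,\Omega_2}^{2}$, and the Poincar\'e-type estimate of \textsc{Lemma} \ref{Th Completeness of V(Omega_2)}(i), applied to $\qtwo\in E_{0}(\Omega_{2})$, gives $\Vert\qtwo\Vert_{1,\Omega_2}\le C_{2}\Vert\wtwo\Vert_{0,\Omega_2}$ with $C_{2}$ depending only on $\Omega_{2}$.

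Combining the two blocks, $\B_{h}[\vone,\qtwo]([\wtwo,\rone])=\Vert\rone\Vert_{0,\Omega_1}^{2}+\Vert\wtwo\Vert_{0,\Omega_2}^{2}=\Vert[\wtwo,\rone]\Vert_{\Y_{h}}^{2}$, while the two stability bounds yield $\Vert[\vone,\qtwo]\Vert_{\X_{h}}\le\max(C_{1},C_{2})\,\Vert[\wtwo,\rone]\Vert_{\Y_{h}}$. Factoring one copy of $\Vert[\wtwo,\rone]\Vert_{\Y_{h}}$ and using this bound delivers \eqref{Ineq FEM existence of beta} with $C=1/\max(C_{1},C_{2})$. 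The one point deserving care, and which I expect to be the crux, is the triangulation-independence of the final constant: this is precisely what \textsc{Theorem} \ref{Th Local Solution of the Dual Formulation} and \textsc{Lemma} \ref{Th Completeness of V(Omega_2)} secure, since $C_{1}$ and $C_{2}$ depend only on the fixed subdomains $\Omega_{1}$ and $\Omega_{2}$ (equivalently on the map $\map$) and not on the mesh size $h$.
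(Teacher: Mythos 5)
Your proposal is correct and follows essentially the same route as the paper's own proof: the divergence lift of $\rone$ via \textsc{Theorem} \ref{Th Local Solution of the Dual Formulation}, the zero-mean antiderivative $\qtwo\in E_{0}(\Omega_{2})\cap\mathcal{Q}(\Omega_{2})$ of $\wtwo$ controlled through \textsc{Inequality} \eqref{Ineq Control on Space Generating with Isomorphism}, and the final combination $\B_{h}[\vone,\qtwo]([\wtwo,\rone])=\Vert[\wtwo,\rone]\Vert_{\Y}^{2}\geq C^{-1}\Vert[\vone,\qtwo]\Vert_{\X}\Vert[\wtwo,\rone]\Vert_{\Y}$. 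Your explicit remarks on why subtracting a constant keeps $\qtwo$ in $\mathcal{Q}(\Omega_{2})$ and on the mesh-independence of the constants are just slightly more detailed renderings of steps the paper states tersely.
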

\begin{proof}
The continuity of $\B_{h}$ follows from the continuity of $\B$. Now fix $[\wtwo, \rone]\;\in \Y_{h}$, due to \textsc{Theorem} \ref{Th Local Solution of the Dual Formulation} there exists $\vone\in \mathbf{RT}_{0}(\Omega_{1})$ such that $\div \vone = \rone$ and $\Vert \vone \Vert_{\Hdiv(\Omega_{1})} \leq C\, \Vert \rone\Vert_{L^{2}(\Omega_{1})}$, with $C>0$ depending only on the domain $\Omega_{1}$.

Next, by definition of $\grad \mathcal{Q}(\Omega_{2})$ there must exist $\eta\in \mathcal{Q}(\Omega_{2})$ such that $\grad \eta = \wtwo$. Define $\qtwo \defining \eta - \frac{1}{\vert \Omega_{2}\vert} \int_{\Omega_{2}}\eta$, clearly $\qtwo \in E_{0}(\Omega_{2}) \cap \mathcal{Q}(\Omega_{2})$ and due to the \textsc{Inequality} \eqref{Ineq Control on Space Generating with Isomorphism}, it holds that $\Vert \qtwo \Vert_{1, \Omega_{2}} \leq C \Vert \wtwo \Vert_{0, \Omega_{2}}$ with $C>0$ depending only on the domain $\Omega_{2}$.

Then, the pair $[\vone, \qtwo]$ belongs to $\X_{h}$ and satisfies $\big\Vert [\vone, \qtwo] \Vert_{\X} \leq C  \, \big\Vert [\wtwo, \rone] \big\Vert_{\Y} $ with $C>0$ adequate depending only on the domain $\Omega$. Therefore,
\begin{equation*}
   \B_{h} [\vone, \qtwo]\big([\wtwo, \rone]\big) = \big\Vert [\wtwo, \rone] \big\Vert_{\Y}^{2} 
   \geq  \frac{1}{C}\,  
   \big\Vert [\vone, \qtwo] \big\Vert_{\X} \, 
   \big\Vert [\wtwo, \rone] \big\Vert_{\Y} \,  .
\end{equation*}
This completes the proof.
\qed
\end{proof}
\begin{lemma}\label{Th FEM coercivity of A on ker B}
If \textsc{Hypothesis \ref{Hyp non null local storage coefficient}} is satisfied then, the operator $\A_{h}:\X_{h}\rightarrow \X_{h} '$ defined by \eqref{Def FEM Regular Actions Operator } is continuous and $\X_{h}$-coercive on $\X_{h}\cap \ker (\B_{h})$ i.e., 
\begin{equation}\label{Ineq FEM charaterization of alpha}
\A_{h}[\vone, \qtwo] \big([\vone, \qtwo]\big)\geq  C \,
\big\Vert[\vone, \qtwo] \big\Vert_{\X_{h}}^{2} \,,\quad 
\text{for all}\; [\vone, \qtwo]\in \X_{h}\cap \ker (\B_{h}) .
\end{equation}
Where $C>0$ is an adequate constant depending only on the domain $\Omega$.
\end{lemma}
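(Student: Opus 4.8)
The plan is to exploit the antisymmetry built into $\A$ together with an explicit description of $\ker(\B_h)$. Evaluating the quadratic form on the diagonal, i.e. setting $[\wone,\rtwo]=[\vone,\qtwo]$ in \eqref{Def Regular Actions Operator }, the two trace terms $-\int_\Gamma(\vone\cdot\n)\qtwo\,dS$ and $+\int_\Gamma\qtwo(\vone\cdot\n)\,dS$ cancel exactly, leaving
\[
\A_h[\vone,\qtwo]\big([\vone,\qtwo]\big)=\int_{\Omega_1}a\,\vert\vone\vert^2+\int_\Gamma\beta\,\qtwo^2\,dS .
\]
Both summands are nonnegative under \textsc{Hypothesis} \ref{Hyp non null local storage coefficient}, so the whole task reduces to showing that this sum controls $\Vert\vone\Vert_{\Hdiv(\Omega_1)}^2+\Vert\qtwo\Vert_{H^1(\Omega_2)}^2$ on the kernel.

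Next I would describe $\X_h\cap\ker(\B_h)$ concretely. Since $\vone\in\mathbf{RT}_0(\Omega_1)$ has piecewise constant divergence, $\div\vone\in\mathcal{Q}(\Omega_1)$, and testing \eqref{Def FEM Mixed Operator Continuous} against $[\0,\div\vone]\in\Y_h$ forces $\div\vone=0$; likewise $\grad\qtwo\in\grad\mathcal{Q}(\Omega_2)$, so testing against $[\grad\qtwo,0]\in\Y_h$ forces $\grad\qtwo=0$. Hence $\qtwo$ is constant on each connected component of $\Omega_2$, and on the kernel the norm collapses to $\Vert[\vone,\qtwo]\Vert_{\X_h}^2=\Vert\vone\Vert_{0,\Omega_1}^2+\Vert\qtwo\Vert_{0,\Omega_2}^2$. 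The velocity part is then immediate from the essential lower bound $a_*\defining\Vert 1/a\Vert_{L^\infty(\Omega)}^{-1}>0$ of \textsc{Hypothesis} \ref{Hyp non null local storage coefficient}, giving $\int_{\Omega_1}a\,\vert\vone\vert^2\ge a_*\Vert\vone\Vert_{0,\Omega_1}^2$.

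The pressure part is the crux, and the step I expect to be the main obstacle. Because $\grad\qtwo=0$, the interface integral is a finite sum over the components $M$ of $\Omega_2$, namely $\int_\Gamma\beta\,\qtwo^2\,dS=\sum_M(\qtwo\vert_M)^2\big(\int_{\partial M\cap\Gamma}\beta\,dS\big)$, while $\Vert\qtwo\Vert_{0,\Omega_2}^2=\sum_M(\qtwo\vert_M)^2\vert M\vert$; thus the storage term dominates the bulk $L^2$ mass exactly when the weight on each component's interface is positive, which is where the connectedness in \textsc{Hypothesis} \ref{Hyp Geometry of the Domain}(ii) and the positivity $\Vert\beta\,\ind_\Gamma\Vert_{L^1(\Gamma)}>0$ must be invoked to pin the free pressure constant(s). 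When $\Omega_2$ is connected, $\qtwo$ is a single constant and one gets the clean identity $\int_\Gamma\beta\,\qtwo^2\,dS=(\Vert\beta\,\ind_\Gamma\Vert_{L^1(\Gamma)}/\vert\Omega_2\vert)\,\Vert\qtwo\Vert_{0,\Omega_2}^2$. Combining both bounds with $C\defining\min\{a_*,\ \Vert\beta\,\ind_\Gamma\Vert_{L^1(\Gamma)}/\vert\Omega_2\vert\}>0$ and recalling that on the kernel $\Vert\vone\Vert_{0,\Omega_1}^2=\Vert\vone\Vert_{\Hdiv(\Omega_1)}^2$ and $\Vert\qtwo\Vert_{0,\Omega_2}^2=\Vert\qtwo\Vert_{H^1(\Omega_2)}^2$ yields \eqref{Ineq FEM charaterization of alpha}. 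Finally, continuity of $\A_h$ needs no separate work, being the restriction of $\A$, whose boundedness follows from the $L^\infty$ bounds on $a,\beta$ and the continuity of the normal-trace pairing in \eqref{Eq duality notation full trace}.
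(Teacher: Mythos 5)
Your proposal is correct and follows essentially the same route as the paper's own proof: the same diagonal evaluation in which the two interface terms cancel, the same characterization of $\X_{h}\cap\ker(\B_{h})$ by testing with $[\boldsymbol{0},\rone]$ and $[\grad\qtwo,0]$ to force $\div\vone=0$ and $\grad\qtwo=\boldsymbol{0}$, and the same final constant $C=\min\big\{\big\Vert \tfrac{1}{a}\big\Vert^{-1}_{L^{\infty}(\Omega)},\ \vert\Omega_{2}\vert^{-1}\Vert\beta\,\ind_{\Gamma}\Vert_{L^{1}(\Gamma)}\big\}$. The only point of departure is that you explicitly flag the possibility of several connected components of $\Omega_{2}$, where $\grad\qtwo=\boldsymbol{0}$ only makes $\qtwo$ locally constant, whereas the paper's proof silently asserts a single constant and the clean identity $\int_{\Gamma}\beta\,\qtwo^{2}\,dS=\vert\Omega_{2}\vert^{-1}\Vert\beta\,\ind_{\Gamma}\Vert_{L^{1}(\Gamma)}\Vert\qtwo\Vert_{0,\Omega_{2}}^{2}$; your treatment of that subtlety is, if anything, more careful than the published argument.
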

\begin{proof} The continuity of the operator $\A_{h}$ follows from the continuity of the operator $\A$. For the coerciveness of the operator, let $[\vone, \qtwo]\in \X_{h}\cap \ker(\B_{h})$ then 
   \begin{equation}\label{Def FEM testing the kernel of B}
      \B_{h}[\vone, \qtwo]\big([\wtwo, \rone]\big) = 0 \quad \text{for all}\; [\wtwo, \rone]\in \Y_{h} .
   \end{equation}
Notice that $\div\vone\vert_{L}$ is constant for each $L\in \triang$ contained in $\Omega_{1}$ and that $\grad \qtwo$ belongs to $\grad \mathcal{Q}(\Omega_{2})$ by definition. Therefore, $[\grad \qtwo, \div\vone] \in \Y_{h}$, in particular, testing \eqref{Def FEM testing the kernel of B} with $[\bm{0}, \rone]\in \Y_{h}$ we conclude that $\div \vone = 0$ since $\rone$ is an arbitrary element in $L^{2}(\Omega_{1})$. On the other hand, clearly $\grad \qtwo \in \grad \mathcal{Q}(\Omega_{2})$ and the pair $[\grad \qtwo, 0]\in \Y$ is eligible for testing \eqref{Def FEM testing the kernel of B}; which yields $\grad\qtwo = \bm{0}$, i.e. $\qtwo$ is constant inside $\Omega_{2}$. Hence 
\begin{equation*}
   \int_{\Gamma} \beta \, \qtwo^{2} = 
   \frac{\Vert \beta \, \ind_{\partial \Gamma}\Vert_{L^{1}(\Gamma)}}{\vert  \Omega_{2} \vert} \, 
   \Vert \qtwo \Vert_{0, \Omega_{2}}^{2} 
   = \frac{\Vert \beta \, \ind_{\Gamma}\Vert_{L^{1}(\Gamma)}}{\vert  \Omega_{2} \vert} \, 
   \Vert \qtwo \ind_{\Omega_{2}} \Vert_{1, \Omega_{2}}^{2} .
\end{equation*}
Using the previous observations we get that
\begin{equation*}
\begin{split}
\A_{h}[\vone, \qtwo] \big([\vone, \qtwo]\big) & = 
\int_{\Omega_1}  a\vone \cdot \vone 
+ \int_{\Gamma}  \beta \, \qtwo^{2}  \, dS \\
& \geq
\Big\Vert\frac{1}{a} \Big\Vert^{-1}_{L^{\infty}(\Omega)}\Vert \vone \Vert^{2}_{\Hdiv(\Omega_{1} ) } 
+ \frac{\Vert \beta \, \ind_{\Gamma}\Vert_{L^{1}(\Gamma)}}{\vert  \Omega_{2} \vert} \, 
   \Vert \qtwo  \Vert_{1, \Omega_{2} }^{2} \\
 &  \geq C \, \big\Vert [\vone, \qtwo]\big\Vert_{\X}^{2} \, ,
   \end{split}
\end{equation*}
where $C = \min \big\{\big\Vert\dfrac{1}{a} \big\Vert^{-1}_{L^{\infty}(\Omega)}, \, 
\vert  \Omega_{2} \vert^{-1}\, \Vert \beta \, \ind_{\Gamma}\Vert_{L^{1}(\Gamma)} \,  \big\}$. This completes the proof. 
\qed
\end{proof}
\begin{theorem}\label{Th FEM Well Posedness Problem}
Let $\Omega$ be a polygonal region and let $\triang$ be a triangulation, then if the \textsc{Hypothesis \ref{Hyp non null local storage coefficient}} is satisfied, the \textsc{Problem} \eqref{Pblm FEM operators weak continuous solution} is well-posed.
\end{theorem}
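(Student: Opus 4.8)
The plan is to recognize that the discrete \textsc{Problem} \eqref{Pblm FEM operators weak continuous solution} is precisely an instance of the abstract saddle-point \textsc{Problem} \eqref{Pblm operators abstrac system}, now posed on the finite-dimensional spaces $\X_{h}$, $\Y_{h}$ with the operators $\A_{h}, \B_{h}, \C_{h}$. Consequently the statement will follow at once from the abstract well-posedness result \textsc{Theorem} \ref{Th well posedeness mixed formulation classic}, provided its three hypotheses are verified for the discrete operators. I would first observe that $\X_{h}$ and $\Y_{h}$, being finite-dimensional subspaces of the Hilbert spaces $\X$ and $\Y$ endowed with the inherited norms, are closed and hence Hilbert spaces in their own right; this places us squarely within the abstract framework.

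The bulk of the verification has already been carried out in the preceding lemmas, so the proof consists mainly in collecting those results. Condition (ii), the inf-sup condition for $\B_{h}$ together with its continuity, is exactly the content of \textsc{Lemma} \ref{Th FEM inf sup condition}. Condition (i), namely that $\A_{h}$ is continuous and $\X_{h}$-coercive on $\ker(\B_{h})$, is furnished by \textsc{Lemma} \ref{Th FEM coercivity of A on ker B}, which is where \textsc{Hypothesis} \ref{Hyp non null local storage coefficient} enters through the positivity of $\Vert 1/a \Vert_{L^{\infty}(\Omega)}^{-1}$ and $\Vert \beta\, \ind_{\Gamma} \Vert_{L^{1}(\Gamma)}$. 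The non-negativity of $\A_{h}$ on all of $\X_{h}$ is equally immediate: the two skew interface terms in the definition of $\A_{h}$ cancel when one tests against the diagonal, leaving $\int_{\Omega_{1}} a\,|\vone|^{2} + \int_{\Gamma}\beta\,\qtwo^{2} \geq 0$.

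The only hypothesis not yet recorded is condition (iii), the non-negativity and symmetry of $\C_{h}$, and this I would dispatch directly from its definition \eqref{Def Non Regular Actions non-negative Operator}. Since $\C_{h}[\vtwo, \qone]([\wtwo, \rone]) = \int_{\Omega_{2}} a\, \vtwo \cdot \wtwo$, symmetry in the two arguments is clear, and testing against the same element gives $\C_{h}[\vtwo, \qone]([\vtwo, \qone]) = \int_{\Omega_{2}} a\, |\vtwo|^{2} \geq 0$ because $a > 0$. With all three hypotheses in place, \textsc{Theorem} \ref{Th well posedeness mixed formulation classic} yields existence, uniqueness, and the stability estimate \eqref{mix-est}, which finishes the argument.

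As for the main obstacle: the genuine difficulty of this result lies entirely upstream, in establishing the discrete inf-sup and coercivity estimates of the two lemmas — in particular in producing, for each $[\wtwo, \rone] \in \Y_{h}$, a matching $[\vone, \qtwo] \in \X_{h}$ via the Raviart-Thomas right-inverse of the divergence (\textsc{Theorem} \ref{Th Local Solution of the Dual Formulation}) and via subtracting the mean to land in $E_{0}(\Omega_{2})$. Once those are granted, the theorem itself presents no further obstacle; I would, however, take care to note that the constants in both lemmas are independent of $h$, so that the resulting stability bound is uniform across the triangulation — a fact not needed for mere well-posedness but essential for the convergence analysis that follows.
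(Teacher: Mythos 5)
Your proposal is correct and follows essentially the same route as the paper: the paper's own proof likewise notes that $\C_{h}$ is non-negative and symmetric, invokes \textsc{Lemma} \ref{Th FEM inf sup condition} and \textsc{Lemma} \ref{Th FEM coercivity of A on ker B}, and then applies \textsc{Theorem} \ref{Th well posedeness mixed formulation classic}. Your write-up merely makes explicit some details the paper leaves as ``direct to see'' (the symmetry computation for $\C_{h}$, the cancellation of the skew interface terms, and the $h$-uniformity of the constants).
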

\begin{proof}
It is direct to see that the operator $\C_{h}$ is non-negative and symmetric. Due to this fact, \textsc{Lemma} \ref{Th FEM inf sup condition} and \textsc{Lemma} \ref{Th FEM coercivity of A on ker B}, the hypotheses of \textsc{Theorem} \ref{Th well posedeness mixed formulation classic} are satisfied and the result follows. 
\qed
\end{proof}
%
%
%
%
%
%
%
%
%
\subsection{Strong Convergence}
%
%
In this section we prove rigorously, under mild hypotheses on a sequence of triangulations $\{\triang^{h}: h > 0\}$, the strong convergence of discrete solutions to the continuous one i.e., $\big(\uh , \ph \big) \rightarrow \big(\u, p\big)$, when $h\rightarrow 0$. In order to attain a-priori estimates some previous results are necessary.
\begin{proposition}\label{Th L^2 Norm Control}
Let $\Omega$ be a domain satisfying \textsc{Hypothesis \ref{Hyp Geometry of the Domain}} then, there exists $C > 0$ depending only on the map $\map = (\mapone, \maptwo) $ such that
\begin{align}\label{Ineq Control With Gradient and Trace}
%
&\Vert \xi \Vert^{2}_{H^{1}(\Omega_{i})}
\leq   C^{\,2}\Big(\Vert\grad  \xi \Vert^{2}_{L^{2}(\Omega_{i})} + 
\Vert \sqrt{\beta} \, \xi \Vert^{2}_{L^{2}(\Gamma)} 
\Big) , & 
& \text{for all }\, \xi \in H^{1}(\Omega_{i}) \; \text{and }\, i = 1,2.
%
\end{align}
\end{proposition}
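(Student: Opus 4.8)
The plan is to recast the statement as a generalized Poincaré inequality. Since $\Vert\xi\Vert_{1,\Omega_{i}}^{2} = \Vert\xi\Vert_{0,\Omega_{i}}^{2} + \Vert\grad\xi\Vert_{0,\Omega_{i}}^{2}$, it suffices to produce $C'>0$ (depending only on $\map$) with $\Vert\xi\Vert_{0,\Omega_{i}}^{2} \le C'\big(\Vert\grad\xi\Vert_{0,\Omega_{i}}^{2} + \Vert\sqrt{\beta}\,\xi\Vert_{0,\Gamma}^{2}\big)$ for every $\xi\in H^{1}(\Omega_{i})$; adding $\Vert\grad\xi\Vert_{0,\Omega_{i}}^{2}$ to both sides then yields the claim with $C^{2}=C'+1$. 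The reduction isolates the genuine content: controlling the $L^{2}$-mass of $\xi$ by its gradient together with a weighted boundary term.

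I would prove the reduced estimate by contradiction, along the Peetre--Tartar/compactness scheme. Suppose no such $C'$ exists; then for each $n$ there is $\xi_{n}\in H^{1}(\Omega_{i})$ with $\Vert\xi_{n}\Vert_{1,\Omega_{i}}=1$ while $\Vert\grad\xi_{n}\Vert_{0,\Omega_{i}}^{2} + \Vert\sqrt{\beta}\,\xi_{n}\Vert_{0,\Gamma}^{2}\le 1/n$. Because $\Omega_{i}$ is a finite union of bounded polygonal (hence Lipschitz) components, the embedding $H^{1}(\Omega_{i})\hookrightarrow L^{2}(\Omega_{i})$ is compact (Rellich--Kondrachov applied componentwise), so a subsequence satisfies $\xi_{n}\to\xi$ in $L^{2}(\Omega_{i})$. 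Since $\grad\xi_{n}\to 0$ in $L^{2}$, the sequence is Cauchy in $H^{1}(\Omega_{i})$; hence $\xi_{n}\to\xi$ in $H^{1}(\Omega_{i})$, $\Vert\xi\Vert_{1,\Omega_{i}}=1$, and $\grad\xi=0$, so $\xi$ is constant on each connected component of $\Omega_{i}$ (the elements of $\mapone$, resp.\ $\maptwo$). By continuity of the trace $H^{1}(\Omega_{i})\to L^{2}(\Gamma)$ and $\beta\in L^{\infty}(\Gamma)$, we get $\sqrt{\beta}\,\xi_{n}\to\sqrt{\beta}\,\xi$ in $L^{2}(\Gamma)$, whence $\int_{\Gamma}\beta\,\xi^{2}\,dS = \lim_{n}\Vert\sqrt{\beta}\,\xi_{n}\Vert_{0,\Gamma}^{2}=0$.

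The crux, and the step I expect to be the main obstacle, is to deduce $\xi\equiv 0$ (which contradicts $\Vert\xi\Vert_{1,\Omega_{i}}=1$ and closes the argument). Since $\beta\ge 0$ and $\Vert\beta\,\ind_{\Gamma}\Vert_{L^{1}(\Gamma)}>0$ (Hypothesis \ref{Hyp non null local storage coefficient}), the set $\{\beta>0\}\subseteq\Gamma$ has positive surface measure, and $\int_{\Gamma}\beta\,\xi^{2}=0$ forces the trace of $\xi$ to vanish there; as $\xi$ is constant on each component, every component whose boundary meets $\{\beta>0\}$ in positive measure must carry the value $0$. The delicate part is propagating this vanishing to \emph{all} components: one must invoke the connectivity of $\cl(\Omeone)$ and $\cl(\Ometwo)$ together with the bipartite adjacency of $\map$ (Hypothesis \ref{Hyp Geometry of the Domain}) to ensure the zero values spread across the whole of $\Omega_{i}$. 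Pinning down precisely how the support of $\beta$ meets each component's interface is the technical heart of the proof.

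As an alternative to the contradiction scheme, the estimate can be built constructively: on each component $M$ apply the standard trace--Poincaré inequality $\Vert\eta\Vert_{0,M}^{2}\le c_{M}\big(\Vert\grad\eta\Vert_{0,M}^{2} + \Vert\eta\Vert_{0,\gamma}^{2}\big)$ for a boundary piece $\gamma\subseteq\partial M$ of positive measure on which $\beta$ is bounded below, and sum over components. This makes the dependence of $C$ on the geometry of $\map$ explicit, but it relies on the same nondegeneracy of $\beta$ along each component's interface that the compactness argument defers to its final rigidity step.
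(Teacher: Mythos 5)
Your compactness scheme is essentially the same one the paper invokes (its proof is, verbatim, ``show the right-hand side is a norm, then apply Rellich--Kondrachov''), and everything you write up to the rigidity step is correct: the reduction to a generalized Poincar\'e bound, the normalization, the extraction of an $L^{2}$-convergent subsequence, the upgrade to $H^{1}$-convergence because the gradients vanish, and the conclusion that the limit $\xi$ is constant on each connected component of $\Omega_{i}$ with $\int_{\Gamma}\beta\,\xi^{2}\,dS=0$. The genuine gap is precisely the step you flag and leave open: propagating the vanishing from the components that meet $\{\beta>0\}$ to \emph{all} components. This step cannot be carried out under the stated hypotheses, and no appeal to the connectivity of $\cl(\Omeone)$, $\cl(\Ometwo)$ or to the bipartite adjacency will rescue it. By conditions (iii)(b),(c) of the bipartite-map definition, two elements of the same color share boundary of measure zero, so distinct components of $\Omega_{i}$ touch at most at corner points; an $H^{1}$ function has no trace coupling across a single point, so $H^{1}(\Omega_{i})$ splits as a direct sum over the components and nothing links the constants on different components. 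Concretely, in the paper's own checkerboard geometry ($\Omega_{1}$ the two diagonal squares), take $\beta$ supported on the part of $\Gamma$ bordering the first square only --- \textsc{Hypothesis} \ref{Hyp non null local storage coefficient} is then satisfied --- and take $\xi=\ind_{G}$ with $G$ the second square: then $\grad\xi=\bm{0}$, the trace of $\xi$ vanishes a.e.\ on $\{\beta>0\}$, so the right-hand side of \eqref{Ineq Control With Gradient and Trace} is zero while the left-hand side is positive. The inequality is false as stated, so no completion of your argument exists.

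In other words, you located the true obstruction, and it is not one you could have overcome; the statement needs a stronger hypothesis --- e.g.\ $\beta\geq\beta_{0}>0$ a.e.\ on $\Gamma$, or at least $\int_{\partial G\,\cap\,\Gamma}\beta\,dS>0$ for every element $G$ of the map, or else connectedness of each open set $\Omega_{i}$ itself rather than of its closure. The paper's own proof conceals this: it asserts that $\grad\xi=0$ forces $\xi$ to be a \emph{single} constant on $\Omega_{i}$ and then computes $\int_{\Gamma}\beta\,\xi^{2}\,dS=\vert\xi\vert^{2}\,\Vert\beta\Vert_{L^{1}(\Gamma)}$, which is legitimate only when $\Omega_{i}$ is connected --- a property \textsc{Hypothesis} \ref{Hyp Geometry of the Domain} does not grant and the paper's own examples do not have. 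Your componentwise trace--Poincar\'e alternative is sound and has the virtue of making the required nondegeneracy of $\beta$ on each component's interface explicit; under such a strengthened hypothesis either of your two arguments closes, and your write-up would then be a complete, and more careful, proof than the one in the paper.
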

\begin{proof}
Notice that the map $\displaystyle \xi \mapsto \Big(\Vert\grad  \xi \Vert^{2}_{L^{2}(\Omega_{i})} + 
\Vert \sqrt{\beta}  \, \xi \Vert^{2}_{L^{2}(\Gamma)} 
\Big)^{1/2}$ is a norm, for if it is equal to zero it follows that $\xi$ is constant, therefore 
\begin{equation*}
0 = \int_{\Gamma} \beta \, \vert \xi\vert^{2} =\vert \xi\vert^{2}\, \Vert \beta \Vert_{L^{1}(\Gamma)}.
\end{equation*} 
Due to the \textsc{Hypothesis \ref{Hyp non null local storage coefficient}}, this implies that $\xi = 0$. From here, a standard application of the Rellich-Kondrachov Theorem delivers the result.
\qed 
\end{proof}
\begin{proposition}
   Let $\triang^{h}$ be a consistent triangulation of $\Omega$ and let $\big([\uoneh, \ptwoh],[\utwoh, \poneh]\big)\in \X_{h}\times \Y_{h}$ be the solution of \textsc{Problem} \eqref{Pblm FEM operators weak continuous solution}, then there exists $C > 0$ depending only on the domain $\Omega$ such that
   \begin{equation}\label{Ineq Estimate of Pressure One}
      \Vert \poneh\Vert_{0, \Omega_{1}}
      \leq C\, \big(\Vert \uoneh \Vert_{0, \Omega_{1}}^{2}
      + \Vert \ptwoh \Vert_{1, \Omega_{2}}^{2}
      + \Vert \g \Vert_{1, \Omega_{2}}^{2}
      + \Vert \stress \Vert_{1, \Omega_{2}}^{2}
      \big)^{1/2} .
   \end{equation}
\end{proposition}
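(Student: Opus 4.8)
The plan is to test the first discrete equation against a velocity whose divergence reproduces $\poneh$, so that $\Vert \poneh \Vert_{0,\Omega_{1}}^{2}$ surfaces explicitly and can be isolated. Since $\poneh \in \mathcal{Q}(\Omega_{1}, \triang)$, I would first invoke \textsc{Theorem} \ref{Th Local Solution of the Dual Formulation} to produce $\vone \in \mathbf{RT}_{0}(\Omega_{1}, \triang)$ with $\div \vone = \poneh$ and $\Vert \vone \Vert_{\Hdiv(\Omega_{1})} \leq C\, \Vert \poneh \Vert_{0, \Omega_{1}}$, the constant depending only on $\Omega_{1}$ (hence uniform in $h$). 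The pair $[\vone, \bm{0}]$ then lies in $\X_{h}$ and is admissible as a test function in the first line of \eqref{Pblm FEM operators weak continuous solution}, i.e. in \eqref{Pblm weak continuous solution 1}.

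Second, substituting $\qtwo = \bm{0}$ kills every term carrying a factor $\qtwo$ (in particular the $F$, $\beta$, $\flux$ and $\utwoh$ terms), while the term $-\int_{\Omega_{1}} \poneh\, \div \vone$ collapses to $-\Vert \poneh \Vert_{0, \Omega_{1}}^{2}$ because $\div \vone = \poneh$. Rearranging the surviving identity leaves
\begin{equation*}
\Vert \poneh \Vert^{2}_{0, \Omega_{1}} = \int_{\Omega_1} a\, \uoneh \cdot \vone + \int_{\Gamma} \ptwoh\, (\vone \cdot \n)\, dS + \int_{\Omega_1} \g \cdot \vone - \int_{\Gamma} \stress\, (\vone \cdot \n)\, dS .
\end{equation*}

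Third, I would bound the right-hand side term by term. The two volume integrals are controlled by Cauchy--Schwarz together with $\Vert a \Vert_{L^{\infty}(\Omega)}$, producing factors $\Vert \uoneh \Vert_{0, \Omega_{1}} \Vert \vone \Vert_{0, \Omega_{1}}$ and $\Vert \g \Vert_{0, \Omega_{1}} \Vert \vone \Vert_{0, \Omega_{1}}$. The two interface integrals I would read as the $H^{-1/2}(\Gamma)$--$H^{1/2}(\Gamma)$ duality pairing of \eqref{Eq duality notation full trace}: the normal-trace continuity on $\Hdiv$ gives $\Vert \vone \cdot \n \Vert_{-1/2, \Gamma} \leq C\, \Vert \vone \Vert_{\Hdiv(\Omega_{1})}$, while the trace theorem gives $\Vert \ptwoh \Vert_{1/2, \Gamma} \leq C\, \Vert \ptwoh \Vert_{1, \Omega_{2}}$, with the stress term dominated analogously by its data norm $\Vert \stress \Vert_{1, \Omega_{2}}$ (through the relevant trace/extension inequality). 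Every term then carries exactly one factor $\Vert \vone \Vert_{\Hdiv(\Omega_{1})}$, which by the first step is $\leq C\, \Vert \poneh \Vert_{0, \Omega_{1}}$. Collecting gives $\Vert \poneh \Vert_{0, \Omega_{1}}^{2} \leq C\, \Vert \poneh \Vert_{0, \Omega_{1}} \big( \Vert \uoneh \Vert_{0, \Omega_{1}} + \Vert \ptwoh \Vert_{1, \Omega_{2}} + \Vert \g \Vert_{1, \Omega_{2}} + \Vert \stress \Vert_{1, \Omega_{2}} \big)$; dividing by $\Vert \poneh \Vert_{0, \Omega_{1}}$ (the bound being trivial when it vanishes) yields the sum-of-norms estimate, and the equivalence of the $\ell^{1}$ and $\ell^{2}$ norms on $\R^{4}$ converts it into the stated square-root form.

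The main obstacle is the careful handling of the two interface integrals: one must justify interpreting them as genuine $H^{-1/2}$--$H^{1/2}$ dualities and then chain the normal-trace bound on $\Hdiv(\Omega_{1})$ with the boundary trace inequality for $\ptwoh$, all while keeping every constant dependent only on the geometry so that the final $C$ is uniform in $h$ (as the subsequent convergence analysis requires). The passage from the natural norms to the majorants $\Vert \g \Vert_{1, \Omega_{2}}$ and $\Vert \stress \Vert_{1, \Omega_{2}}$ is then a routine domination step, the essential analytic input being the $\Hdiv$-conforming right-inverse of the divergence furnished by \textsc{Theorem} \ref{Th Local Solution of the Dual Formulation}.
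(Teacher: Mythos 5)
Your proof is correct and follows essentially the same route as the paper's: both test the first discrete equation with a pair $[\vone, 0]\in \X_{h}$, bound the surviving terms via Cauchy--Schwarz and the $H^{-1/2}(\Gamma)$--$H^{1/2}(\Gamma)$ duality together with the normal-trace and trace inequalities, and invoke \textsc{Theorem}~\ref{Th Local Solution of the Dual Formulation} to obtain $\vone\in\mathbf{RT}_{0}(\Omega_{1})$ with $\div\vone = \poneh$ and $\Vert\vone\Vert_{\Hdiv(\Omega_{1})}\leq C\,\Vert\poneh\Vert_{0,\Omega_{1}}$. The only difference is the order of operations (you fix this special $\vone$ before estimating, the paper estimates for generic $\vone$ and specializes at the end), which is immaterial.
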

\begin{proof}
Test \textsc{Problem} \eqref{Pblm FEM operators weak continuous solution} with $\big([\vone, 0],[\0, 0]\big)\in \X_{h} \times \Y_{h}$ and add both equations, this gives
\begin{equation}\label{Eq Test for Isolating pone}
\int_{\Omega_1} \poneh \,\div\vone\,
=
-\int_{\Omega_1}  a \, \uoneh \cdot \vone 
-\int_{\,\Gamma}\ptwoh \left(\vone\cdot\n\right)\, d S 
+ \int_{\Omega_{1}} \g \cdot \vone
- \int_{\Gamma} \stress\, (\vone\cdot\n)\, dS  .
\end{equation}
Applying the CBS inequality to each summand we get
\begin{equation*}
\begin{split}
\Big\vert\int_{\Omega_1} \pone \,\div\vone\,\Big\vert
& \leq
C \, \big(\Vert \uoneh \Vert_{0, \Omega_{1}} \Vert \vone \Vert_{0, \Omega_{1}} 
+ \Vert \ptwoh \Vert_{1/2, \Gamma} \, \Vert \vone \Vert_{-1/2, \Gamma}  \\
& + \Vert \g \Vert_{0, \Omega_{1}} \Vert \vone \Vert_{0, \Omega_{1}}  
+ \Vert \stress \Vert_{1/2, \Gamma} \, \Vert \vone \Vert_{-1/2, \Gamma}  
\big)
\\
& \leq 
C \, \big(\Vert \uoneh \Vert_{0, \Omega_{1}} 
+ \Vert \ptwoh \Vert_{1, \Omega_{2}} 
+ \Vert \g \Vert_{0, \Omega_{1}} 
+ \Vert \stress \Vert_{1/2, \Gamma} 
\big) \Vert \vone \Vert_{\Hdiv(\Omega_{1})} \\
& \leq 2 \,  C\, \big(\Vert \uoneh \Vert_{0, \Omega_{1}}^{2}
      + \Vert \ptwoh \Vert_{1, \Omega_{2}}^{2}
      + \Vert \g \Vert_{1, \Omega_{2}}^{2}
      + \Vert \stress \Vert_{1/2, \Gamma}^{2}
      \big)^{\tfrac{1}{2}} \, \Vert \vone \Vert_{\Hdiv(\Omega_{1})} .  
\end{split}
\end{equation*}
Here the generic constant of the second line is large enough. 
Due to \textsc{Theorem \ref{Th Local Solution of the Dual Formulation}} there exists $\vone\in \mathbf{RT}_{0}(\Omega_{1})$ such that $\div \vone = \poneh$ and $\Vert \vone \Vert_{\Hdiv(\Omega_{1})} \leq C\, \Vert \poneh \Vert_{0, \Omega_{1}}$, where the generic bound $C > 0$, depends only on the domain $\Omega_{1}$. Testing the expression above with this function, the \textsc{Inequality} \eqref{Ineq Estimate of Pressure One} follows.
   \qed
\end{proof}
Now we are ready to present an a-priori estimate.
\begin{theorem}\label{Th a-priori Estimates}
Let $\{\triang^{h}: h>0\}$ be a monotone sequence of consistent triangulations of $\Omega$. Denote by $\big([\uoneh, \ptwoh],[\utwoh, \poneh]\big)\in \X_{h}\times \Y_{h}$ the solution of \textsc{Problem} \eqref{Pblm FEM operators weak continuous solution} associated to the triangulation $\triang^{h}$ with the fixed forcing terms $F$, $\g$, $\stress$, $\flux$. Then, there exists $C>0$ such that 
\begin{align}\label{Ineq Uniform Boundedness}
& \Vert[\uoneh, \ptwoh]\Vert_{\X}+\Vert [\utwoh, \poneh]\Vert_{\Y} \leq C \, ,&
& \text{for all }\, h > 0 .
\end{align}
\end{theorem}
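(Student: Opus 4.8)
The plan is to derive the uniform bound directly from the abstract estimate \eqref{mix-est} of \textsc{Theorem} \ref{Th well posedeness mixed formulation classic}, the whole point being to verify that none of the constants entering that estimate degenerates as $h \to 0$. For each fixed $h$, \textsc{Theorem} \ref{Th FEM Well Posedness Problem} guarantees that the discrete triple $(\A_h, \B_h, \C_h)$ meets the hypotheses of \textsc{Theorem} \ref{Th well posedeness mixed formulation classic} on $\X_h \times \Y_h$, so \eqref{mix-est} yields a bound of the form $\Vert[\uoneh, \ptwoh]\Vert_{\X} + \Vert [\utwoh, \poneh]\Vert_{\Y} \le c_h\,(\Vert F_1\Vert_{\X_h'} + \Vert F_2\Vert_{\Y_h'})$. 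It therefore suffices to show that $c_h$ and the right-hand side are bounded independently of $h$.

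First I would recall that, in the Babu\v{s}ka--Brezzi construction behind \eqref{mix-est}, the constant $c$ is a rational expression in the continuity norms $\Vert\A_h\Vert$, $\Vert\B_h\Vert$, $\Vert\C_h\Vert$, the ellipticity constant of $\A_h$ on $\ker(\B_h)$, and the inf-sup constant of $\B_h$. The continuity norms cause no trouble: since $\A_h$, $\B_h$, $\C_h$ are mere restrictions of the bounded operators $\A$, $\B$, $\C$ to the subspaces $\X_h \subseteq \X$ and $\Y_h \subseteq \Y$, their operator norms satisfy $\Vert\A_h\Vert \le \Vert\A\Vert$ and likewise for $\B_h$, $\C_h$, all independent of the triangulation. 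Crucially, \textsc{Lemma} \ref{Th FEM coercivity of A on ker B} states that the coercivity constant of $\A_h$ on $\X_h \cap \ker(\B_h)$ depends only on $\Omega$, and \textsc{Lemma} \ref{Th FEM inf sup condition} states explicitly that the inf-sup constant of $\B_h$ depends only on the map $\map$ and is independent of $\triang$. Consequently $c_h \le c$ for a single constant $c$ valid for every member of the family $\{\triang^h\}$.

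Next I would control the data functionals. Because $\X_h$ is a subspace of $\X$, restricting $F_1$ to $\X_h$ can only decrease its dual norm, so $\Vert F_1\Vert_{\X_h'} \le \Vert F_1\Vert_{\X'}$, and similarly $\Vert F_2\Vert_{\Y_h'} \le \Vert F_2\Vert_{\Y'}$. The norms $\Vert F_1\Vert_{\X'}$ and $\Vert F_2\Vert_{\Y'}$ are finite and fixed, estimated once and for all in terms of the data $F$, $\g$, $\stress$, $\flux$ by the Cauchy--Schwarz inequality together with the continuity of the trace pairing on $\Gamma$ recorded in \eqref{Eq duality notation full trace}; these bounds involve only geometric constants and the fixed forcing terms. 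Combining the two steps gives \eqref{Ineq Uniform Boundedness} with $C = c\,(\Vert F_1\Vert_{\X'} + \Vert F_2\Vert_{\Y'})$.

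The only genuine content, and the step I expect to be the crux, is the $h$-independence of the stability constant $c_h$; this is exactly what the two preceding lemmas were designed to furnish, the inf-sup bound of \textsc{Lemma} \ref{Th FEM inf sup condition} being the delicate one since it must not deteriorate under refinement. Everything else is routine bookkeeping about operator norms and dual norms on nested subspaces. I note in passing that the monotonicity hypothesis on $\{\triang^h\}$ is not actually needed for this particular estimate---the uniform bound holds for any family of consistent triangulations---but it is retained here for use in the subsequent convergence analysis.
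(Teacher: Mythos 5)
Your proposal is correct, but it takes a genuinely different route from the paper. The paper proves \eqref{Ineq Uniform Boundedness} by a self-contained energy argument: it tests \textsc{Problem} \eqref{Pblm FEM operators weak continuous solution} with the discrete solution itself to obtain the identity \eqref{Eq Test on the FEM diagonal}, bounds $\Vert \poneh \Vert_{0,\Omega_{1}}$ via the duality estimate \eqref{Ineq Estimate of Pressure One} (built on \textsc{Theorem} \ref{Th Local Solution of the Dual Formulation}), controls the remaining pressure norms through \textsc{Proposition} \ref{Th L^2 Norm Control} and the discretized Darcy and conservation relations, and closes by observing that a quadratic expression in the unknown norms is dominated by a linear one, hence bounded. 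You instead invoke uniform discrete stability: the restrictions $\A_{h}$, $\B_{h}$, $\C_{h}$ inherit the continuity bounds of $\A$, $\B$, $\C$; \textsc{Lemma} \ref{Th FEM coercivity of A on ker B} and \textsc{Lemma} \ref{Th FEM inf sup condition} give $h$-independent kernel-coercivity and inf-sup constants; and the data functionals restricted to $\X_{h}\times\Y_{h}$ have dual norms dominated by $\Vert F_{1}\Vert_{\X '}+\Vert F_{2}\Vert_{\Y '}$. This is legitimate, with the one caveat you yourself flag as the crux: \textsc{Theorem} \ref{Th well posedeness mixed formulation classic} as stated only asserts the existence of the constant $c$ in \eqref{mix-est}, so your argument needs the (true and standard, but unstated) fact that $c$ may be taken as an explicit algebraic expression in $\Vert\A\Vert$, $\Vert\C\Vert$, the coercivity constant on $\ker(\B)$ and the inf-sup constant --- this is precisely what the Babu\v{s}ka--Brezzi theory for saddle point problems with a non-negative symmetric perturbation $\C$ provides, and it is exactly the content the paper chooses to re-derive concretely for its specific operators rather than cite. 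What your route buys is brevity and reuse: the same uniform-stability observation yields directly the quasi-optimality bound of \textsc{Theorem} \ref{Th Bounded by the Infimum}, which the paper instead reaches indirectly through Banach--Steinhaus and the strong convergence theorem. What the paper's route buys is independence from the quantitative inner workings of the abstract theorem. Your closing remark that monotonicity of $\{\triang^{h}: h>0\}$ is not needed for this particular estimate is also accurate; it enters only in the later density and convergence arguments.
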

\begin{proof}
Test \textsc{Problem} \eqref{Pblm FEM operators weak continuous solution} with $\big(\uh, \ph\big)$ and add both equations, this gives
\begin{equation}\label{Eq Test on the FEM diagonal}
\int_{\Omega_1}  a \, \vert \uoneh \vert^{2} 
+ \int_{\Omega_2}  a \, \vert \utwoh \vert^{2} 
+ \int_{\Gamma}  \beta \, \vert \ptwoh \vert^{2} \, dS 
= \int_{\Omega }F\, \ph - \int_{\Omega } \g \cdot \uh
+ \int_{\Gamma} \stress\, (\uoneh\cdot\n)\, dS 
- \int_{\Gamma} \flux\, \ptwoh\, dS .
\end{equation}
On the right hand side term, we apply first the usual duality bounds and next the CBS inequality for vectors in $\R^{4}$, this gives
\begin{equation}\label{Ineq First a-priori Estimate}
\begin{split}
C_{0} \Big[ \Vert \uoneh \Vert^{2}_{0,\Omeone}  & 
+ \Vert \utwoh \Vert^{2}_{0, \Ometwo}  
+ \Vert \sqrt{\beta} \, \ptwoh \Vert^{2} _{0, \Gamma} \Big] \\
\leq \Vert F\Vert_{ 0, \Omega }\, & \Vert \ph\Vert_{ 0, \Omega } 
+ \Vert \g\Vert_{L^{2}(\Omega)}\, \Vert \uh\Vert_{0, \Omega } 
%
+ \Vert \stress \Vert_{1/2, \, \Gamma}\, \Vert \uoneh\Vert_{\Hdiv(\Omeone)} 
+ \Vert \flux \Vert_{ 0, \Gamma }\, \Vert \ptwoh\Vert_{ 0, \Gamma }  \\
\leq \sqrt{2} \, \Big[ \Vert F &  \Vert_{0, \Omega}^{2}
+ \Vert \g\Vert_{0, \Omega}^{2}
+ \Vert \stress \Vert_{1/2, \, \Gamma}^{2}
+ \Vert \flux \Vert_{0, \Gamma}^{2}\Big]^{1/2} \\
& \Big[\Vert \ph \Vert_{0, \Omega}^{2} 
+ \Vert \ptwoh\Vert_{0, \Gamma}^{2}
+ \Vert \uoneh\Vert_{\Hdiv(\Omeone)}^{2} 
+ \Vert \utwoh\Vert_{0, \Ometwo}^{2}\Big]^{1/2} .
\end{split}
\end{equation}
In the expression above, the constant $\sqrt{2}$ appears due to the estimate $\Vert \uoneh\Vert_{0, \Omeone}^{2}  + \Vert \uoneh\Vert_{\Hdiv(\Omeone)}^{2} \leq 2 \,  \Vert \uoneh\Vert_{\Hdiv(\Omeone)}^{2} $.
Next, we focus on giving estimates to the second factor of the right hand side. In order to bound the pressure, first split it in two pieces $\Vert \ph\Vert_{0, \Omega}^{2}  = \Vert \poneh\Vert_{0, \Omega_{1} }^{2}  + \Vert \ptwoh\Vert_{0, \Omega_{2} }^{2} $, now due to \textsc{Proposition} \ref{Th L^2 Norm Control}, there exists $C > 0$ depending only on the map $\map$ such that
\begin{equation}\label{Ineq Estimate of Pressure Two}
\begin{split} 
\frac{1}{C} \, \Vert \poneh \Vert_{0, \Omega_{1}}^{2} 
& \leq 
\Vert \grad \poneh \Vert_{0, \Omega_{1}}^{2}
+ \Vert \sqrt{\beta} \, \poneh\Vert_{0, \Gamma}^{2} \\
& \leq
2 \, \Vert \uoneh \Vert_{0, \Omega_{1}}^{2} 
+ 2 \, \Vert \g^{h}\Vert_{0, \Omega_{1}}^{2} 
+ 2 \, \Vert \sqrt{\beta} \, \ph_{2}\Vert_{0, \Gamma}^{2}
+ 2 \, \Vert \sqrt{\beta} \, \stress \Vert_{1/2, \, \Gamma}^{2} \\
& \leq
2 \, \Vert \uoneh \Vert_{0, \Omega_{1}}^{2} 
+ 2 \, \Vert \g\Vert_{0, \Omega_{1}}^{2} 
+ 2 \, \Vert \sqrt{\beta} \, \ph_{2}\Vert_{0, \Gamma}^{2}
+ 2 \, \big\Vert \sqrt{\beta} \, \big\Vert_{L^{\infty}(\Gamma)} \, \Vert  \stress \Vert_{1/2, \, \Gamma}^{2} . 
\end{split}
\end{equation}
The second inequality holds due to the strong discretized Darcy equation \eqref{Eq Darcy Strong decomposed 0} i.e, $\uoneh + \grad \poneh = \g^{h}$, with $\g^{h}$ denoting the orthogonal projection of $\g$ onto $\grad \mathcal{Q}(\Omega_{2}, \triang^{h})$. In addition, $\Vert \g^{h} \Vert_{0, \Omega_{1}} \leq \Vert \g \Vert_{0, \Omega_{1}}$ which gives the third inequality. On the other hand, combining the estimates \eqref{Ineq Estimate of Pressure One} and \eqref{Ineq Control With Gradient and Trace} with \eqref{Ineq Estimate of Pressure Two} gives
   \begin{equation}\label{Ineq Second Estimate of Pressure One}
      \Vert \poneh\Vert_{0, \Omega_{1}}^{2}
      \leq C\, \big(\Vert \uoneh \Vert_{0, \Omega_{1}}^{2}
      + \Vert \sqrt{\beta} \, \ptwoh \Vert_{0, \Gamma}^{2}
      + \Vert \g \Vert_{0, \Omega_{1} }^{2}
      + \Vert \stress \Vert_{1/2, \Gamma}^{2}
      \big) ,
   \end{equation}
for $C  > 0$ large enough, which depends only on $\Omega$. Next, 
due to \eqref{Eq conservative strong decomposed 0} it holds that $\Vert \uoneh\Vert_{\Hdiv(\Omeone)}^{2} 
= 
\Vert \uoneh \Vert_{0, \Omeone}^{2} 
+ \Vert F\Vert_{0,\Omeone}^{2}$. Denoting 
$\kappa = \max \big\{1, 2 \, \big\Vert \sqrt{\beta} \, \big\Vert_{L^{\infty}(\Gamma)}  \big\}
\big(\Vert F\Vert_{0, \Omega}^{2}
+ \Vert \g\Vert_{0, \Omega}^{2}
+ \Vert \stress \Vert_{1/2,\,\Gamma}^{2}
+ \Vert \flux \Vert_{0, \Gamma}^{2}\big)$ and introducing these observations in \eqref{Ineq First a-priori Estimate} we get
\begin{equation*} 
C^{2}_{0} \Big[\Vert \uoneh \Vert^{2}_{0, \Omeone} 
+ \Vert \utwoh \Vert^{2}_{0, \Ometwo}  
+ \Vert \sqrt{\beta} \, \ptwoh \Vert^{2} _{0, \Gamma}
\Big]^{2} 
\leq 2 \max\{   C ,  1 \}\, \kappa
\Big[
\Vert \uoneh\Vert_{0, \Omeone}^{2} 
+ \Vert \utwoh\Vert_{0, \Ometwo}^{2}
+ \Vert \sqrt{\beta} \, \ptwoh\Vert_{0, \Gamma}^{2} \Big]
+ \kappa^{2}.
\end{equation*}
The expression above shows that for all $h > 0$, a square function is controlled by a linear function of the same argument, therefore, there must exist yet another constant still denoted by $C>0$, such that
\begin{align*} 
& \Vert \uoneh \Vert^{2}_{0, \Omeone} 
+ \Vert \utwoh \Vert^{2}_{0,\Ometwo}  
+ \Vert \sqrt{\beta} \, \ptwoh \Vert^{2} _{0, \Gamma}
 \leq C \,,  &
& \text{for all }\, h> 0 .
\end{align*}
From here, the strong Darcy equation \eqref{Eq Darcy Strong decomposed 1}, the \textsc{Inequality} \eqref{Ineq Control With Gradient and Trace}, the \textsc{Inequality} \eqref{Ineq Second Estimate of Pressure One} and the conservation \textsc{Statement} \eqref{Eq conservative strong decomposed 0} give the result.
\qed
\end{proof}
From the standard theory of general Hilbert spaces the following result is trivial.
\begin{corollary}\label{Th Weak Convergence of Solutions}
Assuming the hypotheses of \textsc{Theorem \ref{Th a-priori Estimates}} hold, there exist an element $\big([\uone', \ptwo'],[\utwo', \pone']\big)\in \X\times \Y$ and a subsequence, still denoted the same, such that $\big\{\big([\uoneh, \ptwoh],[\utwoh, \poneh]\big): h>0\}$ is weakly convergent to $ \big([\uone', \ptwo'],[\utwo', \pone']\big) $.
\end{corollary}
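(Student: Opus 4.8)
The plan is to deduce the statement purely from the uniform bound of \textsc{Theorem} \ref{Th a-priori Estimates} together with the weak sequential compactness of bounded sets in a Hilbert space; no problem-specific structure is required beyond the already recorded fact that $\X$ and $\Y$ are Hilbert spaces.

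First I would observe that the product $\X\times\Y$, endowed with the norm $\Vert([\vone,\qtwo],[\vtwo,\qone])\Vert^2 = \Vert[\vone,\qtwo]\Vert_\X^2 + \Vert[\vtwo,\qone]\Vert_\Y^2$, is again a Hilbert space and therefore reflexive. By \textsc{Theorem} \ref{Th a-priori Estimates} the family $\{([\uoneh,\ptwoh],[\utwoh,\poneh]): h>0\}$ is uniformly bounded by a constant $C$ independent of $h$, so it is contained in a closed ball of $\X\times\Y$. Since the corollary concerns the regime $h\to 0$, I would then fix an arbitrary sequence $h_n\downarrow 0$ and consider the corresponding sequence lying in that closed ball. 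Because $\X\times\Y$ is reflexive, the Banach--Alaoglu theorem together with reflexivity (equivalently, the Eberlein--\v{S}mulian theorem) guarantees that every bounded sequence admits a weakly convergent subsequence; extracting one produces a limit $([\uone',\ptwo'],[\utwo',\pone'])\in\X\times\Y$, which belongs to the space because weak limits remain in the closed ambient space. Relabelling the subsequence as the original sequence yields the assertion.

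There is essentially no obstacle here, as the author indicates: the result is a textbook consequence of reflexivity. The only points deserving a moment's care are that the index set is a continuum rather than a sequence, which is handled by first fixing $h_n\downarrow 0$, and that weak convergence in the product $\X\times\Y$ is equivalent to the componentwise weak convergences $\uoneh\rightharpoonup\uone'$ in $\Hdiv(\Omega_1)$, $\ptwoh\rightharpoonup\ptwo'$ in $E(\Omega_2)$, $\utwoh\rightharpoonup\utwo'$ in $\V(\Omega_2)$ and $\poneh\rightharpoonup\pone'$ in $L^2(\Omega_1)$, which is immediate from the definition of the product inner product.
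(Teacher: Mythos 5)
Your proof is correct and follows exactly the route the paper intends: the paper offers no explicit proof, stating only that the result is trivial from standard Hilbert space theory, and your argument (uniform bound from \textsc{Theorem} \ref{Th a-priori Estimates} plus weak sequential compactness of bounded sequences in the reflexive product space $\X\times\Y$) is precisely that standard argument. The added care about fixing a sequence $h_n\downarrow 0$ and the componentwise characterization of weak convergence is a reasonable fleshing-out, not a departure.
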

Before proving the strong convergence of the full sequence of solutions, we recall a standard finite element theory result.
\begin{proposition}\label{Th Density of FEM spaces}
Let $\Omega$ be an open polygonal domain of $\R^{2}$ satisfying \textsc{Hypothesis \ref{Hyp Geometry of the Domain}} and let $\{\triang^{h}: h > 0\}$ be a monotone sequence of consistent triangulations with size $h\rightarrow 0$, then 
\begin{subequations}
\begin{equation}\label{Eq Velocity One FEM Closure}
\cl \big\{ \mathbf{RT}_{0}(\Omega_{1}, \triang^{h})   : h > 0 \big\} 
= \Hdiv(\Omega_{1}) , 
\end{equation}
\begin{equation}\label{Eq Pressure Two FEM Closure}
\cl \big\{ \mathcal{Q}(\Omega_{2}, \triang^{h})   : h > 0 \big\} 
 = E(\Omega_{2}) , 
\end{equation}
\begin{equation}\label{Eq Velocity Two FEM Closure}
\cl \big\{ \grad \mathcal{Q}(\Omega_{2}, \triang^{h})   : h > 0 \big\} 
= \V(\Omega_{2}) , 
\end{equation}
\begin{equation}\label{Eq Pressure One FEM Closure}
\cl \big\{ \mathcal{Q}(\Omega_{1}, \triang^{h})   : h > 0 \big\} 
 = L^{2}(\Omega_{2}) .
\end{equation}
\end{subequations}
\end{proposition}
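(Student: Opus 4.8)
The plan is to prove each of the four closure identities by the same two-step scheme. First, since each discrete space is contained in the corresponding target space and the targets $\Hdiv(\Omega_{1})$, $E(\Omega_{2})$, $\V(\Omega_{2})$, $L^{2}(\Omega_{1})$ are closed, the inclusion ``$\subseteq$'' is automatic; then ``$\supseteq$'' is obtained by showing that every element of the target space is the norm-limit, as $h\rightarrow 0$, of a sequence of discrete functions. The monotonicity of $\{\triang^{h}\}$ guarantees that the finite element spaces are nested: a function that is polynomial on each $K\in\triang^{h}$ is polynomial on each $K'\subseteq K$ with $K'\in\triang^{h'}$ and $h'<h$, so the spaces grow as $h$ decreases and $\bigcup_{h>0}V_{h}$ is a nested union of subspaces, hence itself a linear subspace whose closure is well defined. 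For \eqref{Eq Velocity One FEM Closure} I would exploit the direct-sum structure $\Hdiv(\Omega_{1})=\bigoplus_{L\in\tone}\Hdiv(L)$: since no normal-trace continuity is imposed across components, density reduces to the classical fact that the lowest-order Raviart--Thomas interpolant converges in the $\Hdiv$-norm on each simply connected component. Approximating a general field by smooth fields first, to legitimize the interpolation operator, and then invoking the standard interpolation estimate closes this case.

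The crux is the identity \eqref{Eq Pressure Two FEM Closure}. Given $q\in E(\Omega_{2})$, by definition there is $\xi\in H^{1}(\Omega)$ with $\xi\vert_{\Omega_{2}}=q$. The plan is to apply a global Scott--Zhang (or Cl\'ement) quasi-interpolation operator $\mathcal{I}_{h}\colon H^{1}(\Omega)\rightarrow\{\text{continuous }P_{1}\text{ on }\triang^{h}\}$, which is $H^{1}(\Omega)$-stable and satisfies $\mathcal{I}_{h}\xi\rightarrow\xi$ in $H^{1}(\Omega)$. Because $\triang^{h}$ is a global triangulation of $\Omega$ consistent with $\map$, the function $\mathcal{I}_{h}\xi$ is a globally continuous piecewise-linear function whose restriction to $\Omega_{2}$ lies exactly in $\mathcal{Q}(\Omega_{2},\triang^{h})$, so $\mathcal{I}_{h}\xi\vert_{\Omega_{2}}\rightarrow q$ in $H^{1}(\Omega_{2})$. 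Averaged, rather than pointwise Lagrange, degrees of freedom are essential here, since $H^{1}$ functions in the plane need not be continuous.

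The remaining two identities then follow quickly. For \eqref{Eq Velocity Two FEM Closure}, continuity of $\grad\colon E(\Omega_{2})\rightarrow\V(\Omega_{2})$ turns the $H^{1}$-convergence $q_{h}\rightarrow q$ obtained above into the $L^{2}$-convergence $\grad q_{h}\rightarrow\grad q$, with $\grad q_{h}\in\grad\mathcal{Q}(\Omega_{2},\triang^{h})$; since every element of $\V(\Omega_{2})$ has the form $\grad q$ with $q\in E(\Omega_{2})$, this gives \eqref{Eq Velocity Two FEM Closure} as a direct corollary of \eqref{Eq Pressure Two FEM Closure}. For \eqref{Eq Pressure One FEM Closure}, whose space $\mathcal{Q}(\Omega_{1},\triang^{h})$ consists of piecewise constants on $\Omega_{1}$ and whose closure is $L^{2}(\Omega_{1})$, I would use the $L^{2}(\Omega_{1})$-orthogonal projection onto piecewise constants, whose convergence to the identity as $h\rightarrow0$ is the standard density of step functions in $L^{2}$.

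The main obstacle is the interpolation step behind \eqref{Eq Pressure Two FEM Closure}: one must produce a single global, $H^{1}$-stable interpolation operator whose image lands in the constrained discrete space (functions extending continuously across $\Gamma$ to all of $\Omega$) and whose restriction to $\Omega_{2}$ therefore belongs to $\mathcal{Q}(\Omega_{2},\triang^{h})$. Verifying that consistency of $\triang^{h}$ with $\map$ makes this restriction land in $\mathcal{Q}(\Omega_{2},\triang^{h})$, and that no corner or boundary-layer effect spoils the $H^{1}$-convergence, is where the care is required; the other three identities are either classical or immediate consequences.
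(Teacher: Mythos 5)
Your proposal is correct and takes essentially the same route as the paper: identities \eqref{Eq Velocity One FEM Closure} and \eqref{Eq Pressure One FEM Closure} are dispatched as standard conforming finite element density results, \eqref{Eq Pressure Two FEM Closure} is obtained by passing through the globally continuous piecewise-linear space on all of $\Omega$ (whose closure is $H^{1}(\Omega)$) and restricting to $\Omega_{2}$, and \eqref{Eq Velocity Two FEM Closure} follows from \eqref{Eq Pressure Two FEM Closure} by continuity of the gradient. The only spot where you are slightly imprecise is asserting that every $q\in E(\Omega_{2})$ extends to some $\xi\in H^{1}(\Omega)$ ``by definition'' --- $E(\Omega_{2})$ is defined by the trace condition $q\ind_{\Gamma}\in H^{1/2}(\Gamma)$, so the extension into $\Omega_{1}$ requires a short (e.g.\ harmonic lifting) argument --- but the paper's own proof passes over this same step silently.
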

\begin{proof}
The identities \eqref{Eq Velocity One FEM Closure} and \eqref{Eq Pressure One FEM Closure} are standard conformal finite element results. For the identity \eqref{Eq Pressure Two FEM Closure} it is enough to extend, in a continuous and linear fashion, the elements of $\mathcal{Q}(\Omega_{2}, \triang^{h})$ to polynomials of degree one in the whole domain $\Omega$. This extension yields the classic FEM space of continuous, piecewise linear affine functions (on the whole domain $\Omega$) associated to $\triang^{h}$, which we denote by $\mathcal{Q}^{1}(\Omega, \triang^{h})$. From the standard theory of conformal finite elements, we know that $\cl \big\{ \mathcal{Q}^{1}(\Omega, \triang^{h})   : h > 0 \big\} 
 = H^{1}(\Omega) $, in particular, the statement \eqref{Eq Pressure Two FEM Closure} holds. Finally, the identity \eqref{Eq Velocity Two FEM Closure} follows trivially from \eqref{Eq Pressure Two FEM Closure}.
 \qed
\end{proof}
Next we prove the convergence of the solutions and identify the limiting problem.
\begin{theorem}\label{Th Convergence of the Solutions}
Let $\{\triang^{h}: h>0\}$, $F$, $\g$, $\stress$, $\flux$, $\big([\uoneh, \ptwoh],[\utwoh, \poneh]\big)$ be as in \textsc{Theorem \ref{Th a-priori Estimates}}. Then, the element $\big([\uone', \ptwo'],[\utwo', \pone']\big) $ given by \textsc{Corollary \ref{Th Weak Convergence of Solutions}} is the unique solution to \textsc{Problem} \eqref{Pblm operators weak continuous solution}. Moreover, the whole sequence converges to this point i.e., 
\begin{align}\label{Stmt Global Weak Convergence}
& \big([\uoneh, \ptwoh],[\utwoh, \poneh]\big)\xrightarrow[h\rightarrow 0]{} 
\big([\uone, \ptwo],[\utwo, \pone]\big) ,&
& \text{weakly in } \X\times \Y.
\end{align}
\end{theorem}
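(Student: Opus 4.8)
The plan is to run the classical Galerkin convergence argument for mixed problems, combining the weak compactness already extracted in \textsc{Corollary \ref{Th Weak Convergence of Solutions}} with the density of the discrete spaces from \textsc{Proposition \ref{Th Density of FEM spaces}} and the uniqueness furnished by the well-posedness of the continuous problem (\textsc{Theorem \ref{Th Well Posedness Problem}}). First I would fix the weakly convergent subsequence, still denoted $\big([\uoneh, \ptwoh],[\utwoh, \poneh]\big)$, with weak limit $\big([\uone', \ptwo'],[\utwo', \pone']\big)\in \X\times \Y$, and show that this limit solves the continuous \textsc{Problem} \eqref{Pblm operators weak continuous solution}; uniqueness then forces it to be the exact solution, and a reflexivity argument upgrades the subsequence to the full sequence.

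The core step is the passage to the limit in the discrete equations. Given an arbitrary test pair $[\vone, \qtwo]\in \X$, the monotonicity of $\{\triang^{h}\}$ makes the discrete spaces nested, so \textsc{Proposition \ref{Th Density of FEM spaces}} supplies a sequence $[\vone^{h}, \qtwo^{h}]\in \X_{h}$ with $[\vone^{h}, \qtwo^{h}]\to [\vone, \qtwo]$ strongly in $\X$; analogously one approximates any $[\vtwo, \qone]\in \Y$ by elements of $\Y_{h}$. Testing the discrete \textsc{Problem} \eqref{Pblm FEM operators weak continuous solution} with these admissible functions and invoking the elementary weak--strong lemma (if $x_{h}\rightharpoonup x$ weakly and $\phi_{h}\to\phi$ strongly then any bounded bilinear form satisfies $b(x_{h},\phi_{h})\to b(x,\phi)$, where the uniform bound $\|x_h\|\leq C$ comes from \textsc{Theorem \ref{Th a-priori Estimates}}) lets me pass each of the continuous bilinear terms associated with $\A$, $\B$, $\B'$, $\C$ to the limit, while the right-hand sides converge by continuity of $F_{1}$, $F_{2}$. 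This shows that $\big([\uone', \ptwo'],[\utwo', \pone']\big)$ satisfies both equations of \eqref{Pblm operators weak continuous solution} against every test function in $\X$ and $\Y$, respectively, hence solves the continuous problem. By the uniqueness part of \textsc{Theorem \ref{Th Well Posedness Problem}}, the limit must coincide with the unique continuous solution $\big([\uone, \ptwo],[\utwo, \pone]\big)$.

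To obtain convergence of the \emph{whole} sequence I would use the standard reflexivity argument: by \textsc{Theorem \ref{Th a-priori Estimates}} the entire sequence is bounded in the Hilbert space $\X\times \Y$, and the reasoning above shows that every weakly convergent subsequence has the \emph{same} weak limit $\big([\uone, \ptwo],[\utwo, \pone]\big)$. A bounded sequence in a reflexive space all of whose weakly convergent subsequences share one limit is itself weakly convergent to that limit, which is exactly \eqref{Stmt Global Weak Convergence}.

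I expect the only delicate point to be the limit passage itself rather than any single estimate: one must ensure the test functions are genuinely admissible at every level (guaranteed by nestedness from monotonicity of the triangulations) and that the mere weak convergence of the solutions suffices when they are paired against the approximating test functions (guaranteed by the weak--strong lemma together with the uniform a-priori bound). Everything else---weak compactness, density, and uniqueness---is already provided by the cited results, so no additional coercivity input is required for this weak-convergence statement.
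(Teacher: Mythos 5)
Your proposal is correct and follows essentially the same route as the paper: approximate an arbitrary test function by strongly convergent discrete test functions (the paper uses the orthogonal projection onto $\X_{h}\times\Y_{h}$, justified by the same density proposition you cite), pass to the limit in the discrete equations via the weak--strong pairing, identify the limit with the unique continuous solution, and then upgrade to the whole sequence by the standard uniqueness-of-subsequential-limits argument based on the uniform a-priori bound. The only cosmetic difference is that the paper realizes your abstract density step concretely through projections; the substance of the argument is identical.
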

\begin{proof}
In order to prove the result, it is enough to show that $\big([\uone', \ptwo'],[\utwo', \pone']\big)$ satisfies the variational \textsc{Statement} \eqref{Pblm weak continuous solution}. Let $\big([\vone, \qtwo],[\vtwo, \qone]\big)$ be an arbitrary element of $\X\times \Y$ and let $\big([\vone^{h}, \qtwo^{h}],[\vtwo^{h}, \qone^{h}]\big)$ be its orthogonal projection onto $\X_{h} \times \Y_{h}$. Due to \textsc{Proposition} \ref{Th Density of FEM spaces} the sequence $\big\{ \big([\vone^{h}, \qtwo^{h}],[\vtwo^{h}, \qone^{h}]\big): h> 0 \big\}$ converges strongly to $\big([\vone, \qtwo],[\vtwo, \qone]\big)$. Now test the variational formulation associated to \textsc{Problem} \eqref{Pblm FEM operators weak continuous solution}, this gives
\begin{subequations} 
\begin{multline*} 
\int_{\Omega_1}  a \, \uoneh \cdot \vone^{h} 
+ \int_{\Gamma}  \beta \, \ptwoh \, \qtwo^{h}  \, dS 
- \int_{\,\Gamma}\left(\uoneh\cdot\n\right) \qtwo^{h} \, d S
+\int_{\,\Gamma}\ptwoh \left(\vone^{h} \cdot\n\right)\, d S \\
- \int_{\Omega_1} \poneh \,\div\vone^{h} \,
- \int_{\Omega_{2}} \utwoh \cdot \grad \qtwo^{h} \, 
= \int_{\Omega_{2}}F\, \qtwo^{h}  - \int_{\Omega_{1}} \g \cdot \vone^{h} 
+ \int_{\Gamma} \stress\, (\vone^{h} \cdot\n)\, dS 
- \int_{\Gamma} \flux\, \qtwo^{h} \, dS ,
\end{multline*}
\begin{equation*} 
\int_{\Omega_1}\div\uoneh\, \qone^{h}   
+ \int_{\Omega_2} \grad \ptwoh \cdot\vtwo^{h}  
+ \int_{\Omega_2}  a \, \utwoh \cdot \vtwo^{h}  \,
= \int_{\Omega_{1}}F\, \qone^{h}  
- \int_{\Omega_{2}} \g \cdot \vtwo^{h} \, .
\end{equation*}
\end{subequations}
Notice that in both expressions above each summand of the left hand side converges since one of the factors is weakly convergent, while the other is strongly convergent. The right hand side also converges due to the strong convergence of the quantifiers. Consequently, the element $\big([\uone', \ptwo'],[\utwo', \pone']\big)$ satisfies the variational \textsc{Statement} \eqref{Pblm weak continuous solution} for any arbitrary test function $\big([\vone, \qtwo],[\vtwo, \qone]\big)$. It follows that $\big([\uone', \ptwo'],[\utwo', \pone']\big)$ is a solution of \textsc{Problem} \eqref{Pblm operators weak continuous solution} , this concludes the first part of the theorem.

For the second part, the well-posedness of \textsc{Problem }\eqref{Pblm operators weak continuous solution} gives the uniqueness of its solution.  Consequently, due to the \textsc{Estimate} \eqref{Ineq Uniform Boundedness}, any subsequence of $\big\{\big([\vone^{h}, \qtwo^{h}],[\vtwo^{h}, \qone^{h}]\big): h > 0\big\}$ would have yet another subsequence weakly convergent  to the solution of \textsc{Problem} \eqref{Pblm operators weak continuous solution}. Hence, the \textsc{Statement} \eqref{Stmt Global Weak Convergence} follows and the proof is complete. 
\qed
\end{proof}
Finally, we have
\begin{theorem}\label{Th Strong Convergence of the Solutions}
Let $\{\triang^{h}: h>0\}$, $F$, $\g$, $\stress$, $\flux$, $\big([\uoneh, \ptwoh],[\utwoh, \poneh]\big)$ be as in \textsc{Theorem \ref{Th a-priori Estimates}} above and let $\big([\uone, \ptwo],[\utwo, \pone]\big)$ be the solution to \textsc{Problem} \eqref{Pblm operators weak continuous solution}. Then
\begin{align}\label{Stmt Global Strong Convergence}
& \big([\uoneh, \ptwoh],[\utwoh, \poneh]\big)\xrightarrow[h\rightarrow 0]{} 
\big([\uone, \ptwo],[\utwo, \pone]\big) ,&
& \text{strongly in } \X\times \Y.
\end{align}
\end{theorem}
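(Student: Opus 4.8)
The plan is to upgrade the weak convergence of \textsc{Theorem \ref{Th Convergence of the Solutions}} to strong convergence via the classical Hilbert-space principle: weak convergence together with convergence of norms forces strong convergence. The engine is the diagonal test already carried out in \textsc{Theorem \ref{Th a-priori Estimates}}. Testing \textsc{Problem} \eqref{Pblm FEM operators weak continuous solution} with its own solution $(\uh,\ph)$ gives
\begin{multline*}
Q_{h}\defining \int_{\Omeone} a\,\vert\uoneh\vert^{2} + \int_{\Ometwo} a\,\vert\utwoh\vert^{2} + \int_{\Gamma}\beta\,\vert\ptwoh\vert^{2}\,dS \\
= \int_{\Omega}F\,\ph - \int_{\Omega}\g\cdot\uh + \int_{\Gamma}\stress\,(\uoneh\cdot\n)\,dS - \int_{\Gamma}\flux\,\ptwoh\,dS .
\end{multline*}
Each term on the right is a fixed functional applied to a component of the solution. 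Since $\uoneh\cdot\n\rightharpoonup\uone\cdot\n$ weakly in $H^{-1/2}(\Gamma)$ (the normal trace is weakly continuous on $\Hdiv(\Omeone)$) and the trace $\ptwoh\vert_{\Gamma}\rightharpoonup\ptwo\vert_{\Gamma}$ weakly in $H^{1/2}(\Gamma)$, the right-hand side converges to the value obtained by testing the continuous \textsc{Problem} \eqref{Pblm weak continuous solution} with its own solution, namely $Q\defining \int_{\Omeone}a\vert\uone\vert^{2} + \int_{\Ometwo}a\vert\utwo\vert^{2} + \int_{\Gamma}\beta\vert\ptwo\vert^{2}\,dS$. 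Hence $Q_{h}\to Q$.

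Because $a$ is bounded above and below by positive constants (\textsc{Hypothesis \ref{Hyp non null local storage coefficient}}), $Q_{h}$ is the square of a Hilbert norm on $\L^{2}(\Omeone)\times\L^{2}(\Ometwo)\times L^{2}(\Gamma)$ equivalent to the standard one. The triple $(\uoneh,\utwoh,\sqrt{\beta}\,\ptwoh\vert_{\Gamma})$ converges weakly to $(\uone,\utwo,\sqrt{\beta}\,\ptwo\vert_{\Gamma})$ in that space while its norm $Q_{h}$ converges to $Q$; therefore the convergence is strong. In particular $\uoneh\to\uone$ and $\utwoh\to\utwo$ strongly in $\L^{2}$, and $\sqrt{\beta}\,\ptwoh\to\sqrt{\beta}\,\ptwo$ strongly in $L^{2}(\Gamma)$. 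This already yields the $\Y$-convergence of the $\Ometwo$-velocity, and it remains to recover the divergence of $\uoneh$, the gradient of $\ptwoh$, and the pressure $\poneh$, none of which is controlled by the coercive form $Q$.

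The velocity $\uoneh$ is promoted to $\Hdiv(\Omeone)$ by noting that testing with $[\0,\rone]$ forces $\div\uoneh$ to be the $L^{2}$-projection of $F$ onto the piecewise constants, which converges to $\div\uone=F$; hence $\uoneh\to\uone$ in $\Hdiv(\Omeone)$. For $\ptwoh$, testing the second equation with $[\vtwo,0]$, $\vtwo\in\grad\mathcal{Q}(\Ometwo)$, gives $\grad\ptwoh=-P_{h}(a\,\utwoh+\g)$, where $P_{h}$ is the $\L^{2}$-orthogonal projection onto $\grad\mathcal{Q}(\Ometwo,\triang^{h})$. By \textsc{Proposition \ref{Th Density of FEM spaces}} and the monotonicity of the triangulations these projections converge pointwise to the projection $P$ onto $\V(\Ometwo)$; combined with $a\,\utwoh+\g\to a\,\utwo+\g$ strongly in $\L^{2}(\Ometwo)$ this gives $\grad\ptwoh\to -P(a\,\utwo+\g)=\grad\ptwo$ strongly. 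Feeding the strong gradient and $\Gamma$-trace convergences into \textsc{Proposition \ref{Th L^2 Norm Control}} applied to $\ptwoh-\ptwo$ shows $\ptwoh\to\ptwo$ strongly in $H^{1}(\Ometwo)$.

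Finally $\poneh$ is recovered through the discrete inf-sup mechanism. Subtracting the continuous equation \eqref{Pblm weak continuous solution 1} (valid for every $\vone\in\mathbf{RT}_{0}(\Omeone,\triang^{h})\subset\Hdiv(\Omeone)$) from the discrete one gives
\[
\int_{\Omeone}(\poneh-\pone)\,\div\vone = \int_{\Omeone}a\,(\uoneh-\uone)\cdot\vone + \int_{\Gamma}(\ptwoh-\ptwo)(\vone\cdot\n)\,dS .
\]
Choosing, via \textsc{Theorem \ref{Th Local Solution of the Dual Formulation}}, a field $\vone$ with $\div\vone=\poneh-\pi_{h}\pone$ ($\pi_{h}$ the $L^{2}$-projection onto the piecewise constants) and $\Vert\vone\Vert_{\Hdiv(\Omeone)}\le C\Vert\poneh-\pi_{h}\pone\Vert_{0,\Omeone}$, and using that $\div\vone$ is piecewise constant so $\int_{\Omeone}(\pone-\pi_{h}\pone)\div\vone=0$, yields $\Vert\poneh-\pi_{h}\pone\Vert_{0,\Omeone}\le C\big(\Vert\uoneh-\uone\Vert_{0,\Omeone}+\Vert\ptwoh-\ptwo\Vert_{1,\Ometwo}\big)\to 0$. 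Since $\pi_{h}\pone\to\pone$, we get $\poneh\to\pone$ in $L^{2}(\Omeone)$, completing the proof. The main obstacle is exactly this last stage together with the gradient recovery: the diagonal form $Q$ sees only the $\L^{2}$ velocities and the weighted $\Gamma$-trace of $\ptwo$, so the divergence of $\uone$, the gradient of $\ptwo$, and the whole of $\pone$ carry no intrinsic coercivity and must be retrieved one by one from the equations, relying on the exact divergence constraint, the convergence $P_{h}\to P$ of the discrete projections, and the discrete inf-sup stability.
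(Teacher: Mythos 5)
Your proof is correct, and for most of its length it runs parallel to the paper's own argument: both test the discrete problem on the diagonal, pass to the limit in the right-hand side using the weak convergence of \textsc{Theorem} \ref{Th Convergence of the Solutions} (the paper phrases this as a $\limsup$/$\liminf$ sandwich with weak lower semicontinuity, you conclude convergence of your $Q_{h}$ directly since $Q_{h}$ equals the converging right-hand side --- same content), deduce strong $\L^{2}$ convergence of $\uoneh$, $\utwoh$ and of $\sqrt{\beta}\,\ptwoh$ on $\Gamma$ from ``weak convergence plus convergence of norms'', and then recover $\div\uoneh$ and $\grad\ptwoh$ as discrete orthogonal projections of $F$ and of $-(a\,\utwoh+\g)$, the latter combined with \textsc{Proposition} \ref{Th L^2 Norm Control} to reach $H^{1}(\Omega_{2})$ convergence. (Your form of the discrete Darcy law, $\grad\ptwoh=-P_{h}(a\,\utwoh+\g)$, is in fact more careful than the paper's, which writes it without the coefficient $a$.) The genuine divergence is in the last component $\poneh$: the paper constructs auxiliary fields $\voneh$ with $\div\voneh=\poneh$, extracts a weakly convergent subsequence, passes to the limit in \textsc{Identity} \eqref{Eq Test for Isolating pone}, identifies the limit with $\Vert\pone\Vert_{0,\Omega_{1}}^{2}$ by testing the continuous problem, and invokes weak-plus-norm convergence once more; you instead subtract the continuous equation from the discrete one (legitimate by conformity of $\mathbf{RT}_{0}(\Omega_{1},\triang^{h})\subset\Hdiv(\Omega_{1})$), test with the inf-sup field of \textsc{Theorem} \ref{Th Local Solution of the Dual Formulation} whose divergence is $\poneh-\pi_{h}\pone$, and exploit the orthogonality of $\pone-\pi_{h}\pone$ to piecewise constants to obtain the quantitative bound $\Vert\poneh-\pi_{h}\pone\Vert_{0,\Omega_{1}}\leq C\big(\Vert\uoneh-\uone\Vert_{0,\Omega_{1}}+\Vert\ptwoh-\ptwo\Vert_{1,\Omega_{2}}\big)$. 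Your route avoids any subsequence extraction, acts on the full sequence at once, and yields an explicit error estimate that prefigures the rate-of-convergence analysis of \textsc{Section} \ref{Sec Rate of Convergence}; the paper's route avoids the Galerkin-orthogonality and projection bookkeeping at the cost of a compactness argument and a weak-limit identification ($\div\vone=\pone$). Both rest on the same two pillars --- the diagonal energy identity and \textsc{Theorem} \ref{Th Local Solution of the Dual Formulation} --- so either can be spliced into the paper without altering its surrounding structure.
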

\begin{proof}
We use the standard approach. Test \textsc{Problem} \eqref{Pblm operators weak continuous solution} with $\big(\u, p\big)$ and add both equations, this yields
\begin{equation}\label{Eq Test on the diagonal}
\int_{\Omega_1}  a \, \vert \uone \vert^{2} 
+ \int_{\Omega_2}  a \, \vert \utwo \vert^{2} 
+ \int_{\Gamma}  \beta \, \vert \ptwo \vert^{2} \, dS 
= \int_{\Omega }F\, p - \int_{\Omega } \g \cdot \u
%
+ \int_{\Gamma} \stress\, (\uoneh\cdot\n)\, dS 
- \int_{\Gamma} \flux\, \ptwo\, dS .
\end{equation}
On the other hand, taking $\limsup$ in the \textsc{Identity} \eqref{Eq Test on the FEM diagonal} we get 
\begin{equation*} 
\begin{split}
\limsup_{h\rightarrow 0} \Big[\int_{\Omega_1}  a \, \vert \uoneh \vert^{2} 
+ \int_{\Omega_2}  a \, \vert \utwoh \vert^{2} 
& + \int_{\Gamma}  \beta \, \vert \ptwoh \vert^{2} \, dS\Big] \\
& = \int_{\Omega }F\, p - \int_{\Omega } \g \cdot \u
%
+ \int_{\Gamma} \stress\, (\uone\cdot\n)\, dS 
- \int_{\Gamma} \flux\, \ptwo\, dS \\
& = \int_{\Omega_1}  a \, \vert \uone \vert^{2} 
+ \int_{\Omega_2}  a \, \vert \utwo \vert^{2} 
+ \int_{\Gamma}  \beta \, \vert \ptwo \vert^{2} \, dS \\
& \leq \liminf_{h\rightarrow 0} \Big[\int_{\Omega_1}  a \, \vert \uoneh \vert^{2} 
+ \int_{\Omega_2}  a \, \vert \utwoh \vert^{2} 
+ \int_{\Gamma}  \beta \, \vert \ptwoh \vert^{2} \, dS\Big] .
\end{split}
\end{equation*}
In the expression above the equality of the second line holds due to the \textsc{Identity} \eqref{Eq Test on the diagonal} and the inequality of the third line holds due to the weak convergence \textsc{Statement} \eqref{Stmt Global Weak Convergence}. From here, due to standard Hilbert space theory, it follows that 
\begin{subequations}\label{Stmt First Stage Strong Convergence}
\begin{equation}\label{Stmt Strong Ltwo Convergence uone}
\Vert \uoneh - \uone \Vert_{0, \Omega} \xrightarrow[h \, \rightarrow \, 0]{}  0 \, ,  
\end{equation}
\begin{equation}\label{Stmt Strong Convergence utwo}
\Vert \utwoh - \utwo \Vert_{0, \Omega}  \xrightarrow[h \, \rightarrow \, 0]{}  0 \, , 
\end{equation}
\begin{equation}\label{Stmt Strong Convergence Trace ptwo}
\Vert \sqrt{\beta} (\ptwoh - \ptwo) \Vert_{0, \Gamma} 
\xrightarrow[h \, \rightarrow \, 0]{}  0 .
\end{equation}
\end{subequations}
On the other hand, the solution $\uoneh$ satisfies the discretization of the \textsc{Equation} \eqref{Eq conservative strong decomposed 0}. Therefore, it holds that $\div \uoneh = F^{h}$, where $F^{h}$ is the orthogonal projection of $F$ on the space $\mathcal{Q}(\Omega_{1}, \triang^{h})$. Since $\Vert F^{h} - F\Vert_{0, \Omega_{1}}\rightarrow 0$ it follows that $\Vert \div \uoneh - \div \uone\Vert_{0, \Omega_{1}}\rightarrow 0$ which, combined with the \textsc{Statement} \eqref{Stmt Strong Ltwo Convergence uone} yields
\begin{equation}\label{Stmt Strong Hdiv Convergence uone}
\Vert \uoneh - \uone \Vert_{\Hdiv(\Omega_{1})} 
\xrightarrow[h \, \rightarrow \, 0]{} 0 \, . 
\end{equation}
Next, the solution $\ptwoh$ satisfies the discretized version of Darcy's law \eqref{Eq Darcy Strong decomposed 1} i.e., $\utwoh + \grad \ptwoh = \g^{h}$. Again, $\g^{h}$ indicates the orthogonal projection of $\g$ onto $\grad \mathcal{Q}(\Omega_{2}, \triang^{h})$ and due to the strong convergence of the orthogonal projections it follows that $\Vert \grad \ptwoh - \grad \ptwo\Vert_{0, \Omega_{2}}\rightarrow 0$. The latter, combined with the \textsc{Statement} \eqref{Stmt Strong Convergence Trace ptwo} and the \textsc{Inequality} \eqref{Ineq Control With Gradient and Trace} implies
\begin{equation}\label{Stmt Strong Convergence ptwo}
\Vert \ptwoh - \ptwo \Vert_{1, \Omega_{2}} 
\xrightarrow[h \, \rightarrow \, 0]{} 0 .
\end{equation}
Finally, for the strong convergence of $ \big\{ \poneh: h > 0\big\} $, let $ \big\{ \voneh: h > 0\big\} \subseteq \Hdiv(\Omega_{1}) $ be a sequence of functions such that $ \div \voneh = \poneh $ and $ \Vert \voneh \Vert \leq C $; which exists because of \textsc{Theorem} \ref{Th Local Solution of the Dual Formulation}. It will be shown that any any subsequence of $ \big\{ \poneh: h > 0\big\} $ has another subsequence denoted with the index $ h' $ such that 
\begin{equation}\label{Eq Norms Limit Pressure One}
\lim\limits_{h'\,\rightarrow\, 0}\Vert \ponehp \Vert_{0, \Omega_{1}} = \Vert \pone\Vert_{0, \Omega_{1}} .
\end{equation}
Take a subsequence of $ \big\{ \poneh: h > 0\big\} $ still denoted the same. Due to the boundedness of $ \big\{ \voneh: h > 0\big\} $, there must exist a convergent subsequence denoted with the index $ h' $ which is weakly convergent in $ \Hdiv(\Omega_{1}) $ to an element $ \vone $. The \textsc{Identity} \eqref{Eq Test for Isolating pone} holds for any element in $ \X_{h'} $ in particular  
\begin{equation*}
\begin{split}
\Vert \ponehp \Vert_{0, \Omega_{1}}^{2} & = \int_{\Omega_1} \ponehp \,\ponehp\, \\
& = 
\int_{\Omega_1} \ponehp \,\div\vonehp\, \\
& =
-\int_{\Omega_1}  a \, \uonehp \cdot \vonehp 
-\int_{\,\Gamma}\ptwohp \left(\vonehp\cdot\n\right)\, d S 
+ \int_{\Omega_{1}} \g \cdot \vonehp
- \int_{\Gamma} \stress\, (\vonehp\cdot\n)\, dS  .
\end{split}
\end{equation*}
All the summands of the right hand side converge since one of the factors converges strongly while the other converges weakly, then the left hand side also converges, i.e.,
\begin{equation*}
\lim\limits_{h'\,\rightarrow\, 0}\Vert \ponehp \Vert_{0, \Omega_{1}}^{2}
=
-\int_{\Omega_1}  a \, \uone \cdot \vone 
-\int_{\,\Gamma}\ptwo \left(\vone\cdot\n\right)\, d S 
+ \int_{\Omega_{1}} \g \cdot \vone
- \int_{\Gamma} \stress\, (\vone\cdot\n)\, dS  .
\end{equation*}
Observe that, $ \div \vone = \pone $ since $ \big\{ \poneh: h > 0\big\} $ converges weakly to $ \pone $, now test the \textsc{Statement} \eqref{Pblm weak continuous solution 1} with $ [\vone, 0]\in \X $ to get
\begin{equation*}
\begin{split}
\Vert \pone\Vert_{0, \Omega_{1}}^{2} & = 
\int_{\Omega_1} \pone \,\pone \\
& = \int_{\Omega_1} \pone \,\div\vone\,\\
& =
-\int_{\Omega_1}  a \, \uone \cdot \vone 
-\int_{\,\Gamma}\ptwo \left(\vone\cdot\n\right)\, d S 
+ \int_{\Omega_{1}} \g \cdot \vone
- \int_{\Gamma} \stress\, (\vone\cdot\n)\, dS  .
\end{split}
\end{equation*}
Equating both expressions above we get \textsc{Equation} \ref{Eq Norms Limit Pressure One}. From elementary real analysis it follows that the full sequence of $L^{2}(\Omega_{1}) $-norms converges to $ \Vert \pone \Vert_{0, \Omega_{1}} $. Finally, from standard Hilbert space theory it follows that $ \Vert \ponehp -  \pone\Vert_{0, \Omega_{1}} \xrightarrow[h\,\rightarrow\, 0]{} 0 $ 
and the proof is complete.
\qed
\end{proof}
%
%
%
%
%
%
%
%
%
%
%
%
%
\subsection{Rate of Convergence}\label{Sec Rate of Convergence}
%
%
%
%
In this section the rate of convergence analysis is presented. It will be done assuming \textsc{Hypothesis \ref{Hyp non null local storage coefficient}} is satisfied. We proceed in the standard way, see \cite{GaticaMixed} 
\begin{definition}\label{Def Projection Operator}
Given $ h > 0 $ fixed, define the operator $ \Lambda_{h}: \X\times \Y \rightarrow \X\times \Y $ such that $ \big([\vone, \ptwo], [\vtwo, \pone]\big)  $ is mapped to the unique solution $ \big([\voneh, \ptwoh], [\vtwoh, \poneh]\big) \in \X\times \Y $ of the problem 
\begin{equation}\label{Eq Projection Operator}
\begin{split}
\A[ \uoneh,\,\ptwoh] + \B ' [ \utwoh,\, \poneh]  
= \A[ \uone,\,\ptwo] + \B ' [ \utwo,\, \pone]  \quad \text{in}\; \X_{h} ' ,\\
- \B [ \uoneh,\, \ptwoh]  + \C [ \utwoh,\, \poneh] 
= - \B [ \uone,\, \ptwo]  + \C [ \utwo,\, \pone] \quad \text{in}\; \Y_{h} ' ,
\end{split}
\end{equation}
followed by the canonical embedding $ j: \X_{h}\times \Y_{h} \hookrightarrow \X\times \Y  $.
\end{definition}
\begin{remark}\label{Rem Projection Operator}
\begin{enumerate}[(i)]
\item The operator $ \Lambda_{h} $ above is well-defined due to \textsc{Theorem} \ref{Th FEM Well Posedness Problem}.

\item Due to \textsc{Theorem} \ref{Th FEM Well Posedness Problem}, the operator is $ \Lambda_{h} $ linear, continuous and idempotent.
\end{enumerate}
\end{remark} 
We have the following result
\begin{theorem}\label{Th Bounded by the Infimum}
\begin{enumerate}[(i)]
\item There exists $ M > 0 $ such that 
\begin{align}\label{Stmt Projections Uniform Boundedness}
& \Vert \Lambda_{h} \Vert \leq M , &
& \text{for all } h > 0 ,
\end{align}
 i.e., the family  $ \big\{ \Lambda_{h}: h > 0 \big\} $ is globally bounded.

\item Let $ \big( [\uone, \ptwo], [\utwo, \pone]\big) \in \X\times \Y $, $ \big( [\uoneh, \ptwoh], [\utwoh, \poneh]\big) \in \X_{h}\times \Y_{h} $ be the unique solutions to \textsc{Problem} \eqref{Pblm operators weak continuous solution} and \eqref{Pblm FEM operators weak continuous solution} respectively, then
\begin{multline}\label{Ineq Discrete Solution as Infimum}
\big\Vert \big( [\uone, \ptwo], [\utwo, \pone]\big) 
- \big( [\uoneh, \ptwoh], [\utwoh, \poneh]\big)\big\Vert_{\X\times \Y} \leq 
\\
(1 + M ) \inf\limits_{ ( [\voneh, \qtwoh], [\vtwoh, \qoneh] )\in \X_{h} \times \Y_{h}}
\big\Vert 
\big( [\uone, \ptwo], [\utwo, \pone]\big)
- \big( [\voneh, \qtwoh], [\vtwoh, \qoneh]\big)
\big\Vert_{\X\times \Y} ,
\end{multline}
for every $ h >0 $.
\end{enumerate}
\end{theorem}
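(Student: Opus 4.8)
The result is the classical Céa-type quasi-optimality estimate, and the plan is to derive it from the projection-operator machinery already set up in Definition \ref{Def Projection Operator}. The key observation is that $\Lambda_h$ is precisely the operator sending the exact solution to its discrete counterpart: by construction, if $\big([\uone, \ptwo], [\utwo, \pone]\big)$ solves Problem \eqref{Pblm operators weak continuous solution}, then the right-hand sides of \eqref{Eq Projection Operator} coincide with $F_1$ and $F_2$ on $\X_h' \times \Y_h'$, so $\Lambda_h\big([\uone, \ptwo], [\utwo, \pone]\big)$ is exactly the discrete solution $\big([\uoneh, \ptwoh], [\utwoh, \poneh]\big)$ of \eqref{Pblm FEM operators weak continuous solution}. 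I would state this identity first, as it converts the two parts of the theorem into statements about a single bounded, idempotent, linear projection, as already noted in Remark \ref{Rem Projection Operator}.

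For part (i), the plan is to obtain the uniform bound $\Vert \Lambda_h \Vert \leq M$ independent of $h$. Given an arbitrary $\big([\uone, \ptwo], [\utwo, \pone]\big) \in \X\times \Y$, its image solves the discrete saddle-point system \eqref{Eq Projection Operator}, whose right-hand side defines functionals in $\X_h'$ and $\Y_h'$ whose norms are controlled by $\Vert \big([\uone, \ptwo], [\utwo, \pone]\big)\Vert_{\X\times\Y}$ via the continuity of $\A$, $\B$, $\B'$ and $\C$. The discrete stability estimate \eqref{mix-est} from Theorem \ref{Th well posedeness mixed formulation classic}, applied to the operators $\A_h, \B_h, \C_h$, then bounds the norm of the solution by $c$ times those functional norms. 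The essential point, which I would emphasize, is that the constant $c$ in \eqref{mix-est} depends only on the coercivity constant of $\A_h$ on $\ker \B_h$ and the inf-sup constant of $\B_h$; both of these were shown in Lemma \ref{Th FEM coercivity of A on ker B} and Lemma \ref{Th FEM inf sup condition} to be independent of the triangulation $\triang$. This uniformity is what yields a single $M$ valid for all $h > 0$, and it is the crux of the argument.

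For part (ii), I would use the standard Céa argument exploiting idempotency. Since $\Lambda_h$ fixes every element of $\X_h \times \Y_h$, for any discrete test element $\big([\voneh, \qtwoh], [\vtwoh, \qoneh]\big) \in \X_h \times \Y_h$ we may write
\begin{equation*}
\big([\uone, \ptwo], [\utwo, \pone]\big) - \Lambda_h\big([\uone, \ptwo], [\utwo, \pone]\big)
= \big(I - \Lambda_h\big)\Big(\big([\uone, \ptwo], [\utwo, \pone]\big) - \big([\voneh, \qtwoh], [\vtwoh, \qoneh]\big)\Big),
\end{equation*}
because $\Lambda_h$ annihilates the discrete term. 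Taking norms and using the identity $\Lambda_h\big([\uone, \ptwo], [\utwo, \pone]\big) = \big([\uoneh, \ptwoh], [\utwoh, \poneh]\big)$ together with $\Vert I - \Lambda_h \Vert \leq 1 + \Vert \Lambda_h \Vert \leq 1 + M$ gives the bound by $(1+M)$ times the distance to the arbitrary discrete element. Taking the infimum over all such elements yields \eqref{Ineq Discrete Solution as Infimum}.

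The main obstacle is entirely contained in part (i): one must verify carefully that the stability constant furnished by Theorem \ref{Th well posedeness mixed formulation classic} is genuinely $h$-independent, which rests on tracking that the inf-sup and coercivity constants of Lemmas \ref{Th FEM inf sup condition} and \ref{Th FEM coercivity of A on ker B} depend only on the geometry (the map $\map$ and the domain $\Omega$) and on the coefficients through Hypothesis \ref{Hyp non null local storage coefficient}, but never on the mesh size. Once that uniformity is secured, part (ii) is a purely algebraic consequence of idempotency and requires no further estimates.
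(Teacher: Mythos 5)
Your proposal is correct, and your part (ii) coincides exactly with the paper's argument (idempotency, the identity involving $I - \Lambda_{h}$, and $\Vert I - \Lambda_{h}\Vert \leq 1 + M$). However, your part (i) takes a genuinely different route. The paper argues \emph{softly}: for an arbitrary $\big([\vone,\qtwo],[\vtwo,\qone]\big)\in\X\times\Y$ it observes that $\Lambda_{h}\big([\vone,\qtwo],[\vtwo,\qone]\big)$ is the discrete solution for the modified data $\widetilde F_{1}, \widetilde F_{2}$, invokes the strong convergence result (\textsc{Theorem} \ref{Th Strong Convergence of the Solutions}) to conclude that $\big\{\Lambda_{h}\big([\vone,\qtwo],[\vtwo,\qone]\big): h>0\big\}$ is bounded for each fixed argument, and then applies the Banach--Steinhaus Uniform Boundedness Principle to get a single $M$. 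You instead argue \emph{quantitatively}: the discrete stability estimate \eqref{mix-est} applied to $\A_{h}, \B_{h}, \C_{h}$ gives $\Vert\Lambda_{h}\Vert \leq c\,(\Vert\A\Vert + \Vert\B\Vert + \Vert\C\Vert)$, where $c$ is uniform in $h$ because the coercivity constant of \textsc{Lemma} \ref{Th FEM coercivity of A on ker B} and the inf-sup constant of \textsc{Lemma} \ref{Th FEM inf sup condition} are independent of $\triang$, and the continuity norms of the discrete operators are dominated by those of the fixed continuous operators. Both arguments are sound, but they buy different things. The paper's route requires no tracking of constants, yet it is non-constructive ($M$ is not explicit), and it leans on the entire convergence machinery --- monotone sequences of consistent triangulations and the density results of \textsc{Proposition} \ref{Th Density of FEM spaces} --- merely to obtain boundedness; it also requires the strong convergence theorem to hold for the arbitrary data $\widetilde F_{1}, \widetilde F_{2}$, not just the original forcing terms. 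Your route yields an $M$ that is in principle explicit in terms of the geometry and the coefficients of \textsc{Hypothesis} \ref{Hyp non null local storage coefficient}, works for any single consistent triangulation without monotonicity or density assumptions, and is the standard Babu\v{s}ka--Brezzi/C\'ea argument found in \cite{GaticaMixed}. The one point you should make fully explicit is that the constant $c$ in \eqref{mix-est} depends only on the coercivity constant, the inf-sup constant, and the operator norms --- this is true in the classical theory but is not stated in \textsc{Theorem} \ref{Th well posedeness mixed formulation classic} as written, so a citation or short verification is needed there; you correctly identify this as the crux.
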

\begin{proof}
\begin{enumerate}[(i)]
\item Given an arbitrary element $ \big( [\vone, \qtwo], [\vtwo, \qone]\big) \in \X\times \Y $, defining $ \widetilde{F}_{1} \defining  \A[ \vone,\,\qtwo] + \B ' [ \vtwo,\, \qone] $ and $ \widetilde{F}_{2} = - \B [\vone,\,\qtwo]  + \C [\vtwo,\,\qone] $, it is clear due to \textsc{Theorem} \ref{Th weak formulation of continuos version} that $ \big( [\vone, \qtwo], [\vtwo, \qone]\big) $ is the unique solution to \textsc{Problem} \ref{Pblm operators weak continuous solution} with $ F_{i} $ replaced by $ \widetilde{F}_{i} $ for $ i = 1, 2 $. Recalling Definition \ref{Def Projection Operator}, it is clear that $ \Lambda_{h} \big( [\vone, \qtwo], [\vtwo, \qone]\big) $ is the unique solution to \textsc{Problem} \eqref{Pblm FEM operators weak continuous solution} and due to the strong convergence analysis, \textsc{Theorem} \ref{Th Strong Convergence of the Solutions}, it holds that $ \big\Vert \big( [\vone, \qtwo], [\vtwo, \qone]\big) - \Lambda_{h}\big( [\vone, \qtwo], [\vtwo, \qone]\big) \big\Vert \xrightarrow[h\,\rightarrow\, 0]{} 0 $. In particular, the sequence $ \big\{ \Lambda_{h}\big( [\vone, \qtwo], [\vtwo, \qone]\big) : h > 0 \big\} $ is bounded i.e., the family of operators $ \big\{ \Lambda_{h} : h > 0 \big\} $ is bounded pointwise; due to the Banach-Steinhaus Uniform Boundedness Principle (from standard Functional Analysis theory), the Statement \eqref{Stmt Projections Uniform Boundedness} holds.

\item Since $ \Lambda_{h} $ is idempotent, observe that 
\begin{equation*}
\big( [\uone, \ptwo], [\utwo, \pone]\big) 
- \big( [\uoneh, \ptwoh], [\utwoh, \poneh]\big) =
\big( I - \Lambda_{h} \big) 
\Big( 
\big( [\uone, \ptwo], [\utwo, \pone]\big) 
- \big( [\voneh, \qtwoh], [\vtwoh, \qoneh]\big)
\Big).
\end{equation*}
From \textsc{Statement} \ref{Stmt Projections Uniform Boundedness} and the expression above, \textsc{Inequality} \eqref{Ineq Discrete Solution as Infimum} follows trivially.
\end{enumerate}
\qed
\end{proof}
Finally, we have the rate of convergence result
\begin{theorem}\label{Th Rate of Convergence}
Let $ \big( [\uone, \ptwo], [\utwo, \pone]\big) \in \X\times \Y $, $ \big( [\uoneh, \ptwoh], [\utwoh, \poneh]\big) \in \X_{h}\times \Y_{h} $ be the unique solutions to \textsc{Problem} \eqref{Pblm operators weak continuous solution} and \eqref{Pblm FEM operators weak continuous solution} respectively, then
\begin{subequations}\label{Eq Rates of Convergence}
\begin{equation}\label{Eq Velocity One Rate of Convergence}
\big\Vert \uone - \uoneh \big\Vert_{\Hdiv(\Omega_{1})} =
\mathcal{O}(h),
\end{equation}
\begin{equation}\label{Eq Velocity Two Rate of Convergence}
\big\Vert \utwo - \utwoh \big\Vert_{0, \Omega_{2}} = 
\mathcal{O}(h),
\end{equation}
\begin{equation}\label{Eq Pressure One Rate of Convergence}
\big\Vert \pone - \poneh \big\Vert_{0, \Omega_{1}} = 
\mathcal{O}(h),
\end{equation}
\begin{align}\label{Eq Presure Two Rate of Convergence}
& \big\Vert \ptwo - \ptwoh \big\Vert_{0, \Omega_{2}} = 
\mathcal{O}(h^{2}), &
& \big\Vert \ptwo - \ptwoh \big\Vert_{1, \Omega_{2}} = 
\mathcal{O}(h) .
\end{align}
\end{subequations}
\end{theorem}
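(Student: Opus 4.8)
The plan is to combine the quasi-optimality estimate of \textsc{Theorem} \ref{Th Bounded by the Infimum} with classical finite element interpolation bounds, and then to sharpen the $L^{2}(\Omega_{2})$ pressure estimate by a duality argument. Throughout I invoke the natural elliptic regularity of the exact solution on each polygonal subregion, namely $\uone\in \mathbf{H}^{1}(\Omega_{1})$ with $\div\uone\in H^{1}(\Omega_{1})$, $\ptwo\in H^{2}(\Omega_{2})$, $\utwo\in \mathbf{H}^{1}(\Omega_{2})$ and $\pone\in H^{1}(\Omega_{1})$, as is guaranteed for sufficiently smooth data. The triangulations are taken shape-regular, as usual for interpolation estimates.

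First I would exploit the product structure of $\X_{h}\times\Y_{h}$. Since the infimum in \eqref{Ineq Discrete Solution as Infimum} ranges over all discrete competitors, it suffices to insert one convenient choice componentwise: the Raviart--Thomas interpolant $\Pi_{h}\uone\in \mathbf{RT}_{0}(\Omega_{1})$, the continuous piecewise-linear Lagrange interpolant $I_{h}\ptwo\in \mathcal{Q}(\Omega_{2})$, the companion gradient field $\grad I_{h}\zeta\in\grad\mathcal{Q}(\Omega_{2})$ where $\utwo=\grad\zeta$ with $\zeta\in E(\Omega_{2})$ (recall $\utwo\in\V(\Omega_{2})=\grad(E(\Omega_{2}))$), and the $L^{2}(\Omega_{1})$-orthogonal projection $\pi_{h}\pone\in \mathcal{Q}(\Omega_{1})$ onto piecewise constants. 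Feeding this competitor into \eqref{Ineq Discrete Solution as Infimum} bounds the total error by $(1+M)$ times the sum of the individual interpolation errors.

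The next step is to quote the standard interpolation estimates on a mesh of size $h$: $\Vert\uone-\Pi_{h}\uone\Vert_{\Hdiv(\Omega_{1})}=\mathcal{O}(h)$ (using the regularity of $\uone$ and of $\div\uone$); $\Vert\ptwo-I_{h}\ptwo\Vert_{1,\Omega_{2}}=\mathcal{O}(h)$ together with $\Vert\ptwo-I_{h}\ptwo\Vert_{0,\Omega_{2}}=\mathcal{O}(h^{2})$; $\Vert\utwo-\grad I_{h}\zeta\Vert_{0,\Omega_{2}}=\mathcal{O}(h)$ (this being the $H^{1}$-seminorm interpolation error of $\zeta$); and $\Vert\pone-\pi_{h}\pone\Vert_{0,\Omega_{1}}=\mathcal{O}(h)$. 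Because the $\X\times\Y$-norm measures $\uone$ in $\Hdiv(\Omega_{1})$, $\ptwo$ in $H^{1}(\Omega_{2})$, $\utwo$ in $L^{2}(\Omega_{2})$ and $\pone$ in $L^{2}(\Omega_{1})$, the quasi-optimality bound immediately yields \eqref{Eq Velocity One Rate of Convergence}, \eqref{Eq Velocity Two Rate of Convergence}, \eqref{Eq Pressure One Rate of Convergence} and the $H^{1}$ half of \eqref{Eq Presure Two Rate of Convergence}, all at order $\mathcal{O}(h)$.

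The only estimate not reached this way is the sharpened $\Vert\ptwo-\ptwoh\Vert_{0,\Omega_{2}}=\mathcal{O}(h^{2})$, since quasi-optimality in the energy norm controls only the $H^{1}$ norm of the pressure error, and hence its $L^{2}$ norm, at order $h$. Here I expect the genuine work, by way of an Aubin--Nitsche duality argument. Writing $e\defining\ptwo-\ptwoh$, I would introduce the adjoint problem whose datum is the functional $(e,\cdot)_{L^{2}(\Omega_{2})}$ acting through the full saddle-point operator of \eqref{Pblm operators weak continuous solution}, whose solvability follows from the adjoint version of \textsc{Theorem} \ref{Th Well Posedness Problem}. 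Assuming the dual solution gains two derivatives with the a priori bound $\Vert\text{dual}\Vert_{H^{2}}\le C\Vert e\Vert_{0,\Omega_{2}}$, one tests the dual problem with the primal error, invokes the Galerkin orthogonality $\mathcal{B}(\text{exact}-\text{discrete},\cdot)=0$ on $\X_{h}\times\Y_{h}$ to replace the dual solution by its interpolant, and estimates the result by $\Vert\text{error}\Vert_{\X\times\Y}\cdot\Vert\text{dual}-\text{interpolant}\Vert_{\X\times\Y}=\mathcal{O}(h)\cdot\mathcal{O}(h)\Vert e\Vert_{0,\Omega_{2}}$; dividing by $\Vert e\Vert_{0,\Omega_{2}}$ gives the $\mathcal{O}(h^{2})$ rate. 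The main obstacle is precisely justifying the $H^{2}$-regularity and well-posedness of this dual problem on the (possibly nonconvex) polygonal subdomain $\Omega_{2}$ endowed with the weak interface coupling, since the gained power of $h$ rests entirely on that regularity; were only partial regularity $H^{1+s}$ available, the argument would instead deliver the reduced rate $\mathcal{O}(h^{1+s})$.
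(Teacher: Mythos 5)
Your handling of \eqref{Eq Velocity One Rate of Convergence}, \eqref{Eq Velocity Two Rate of Convergence}, \eqref{Eq Pressure One Rate of Convergence} and of the $H^{1}$ half of \eqref{Eq Presure Two Rate of Convergence} is essentially the paper's argument: quasi-optimality from \textsc{Theorem} \ref{Th Bounded by the Infimum} combined with standard interpolation bounds (Raviart--Thomas interpolant, Lagrange interpolant and its gradient, piecewise-constant $L^{2}$ projection). One presentational difference actually works in your favor: the paper isolates each component by re-running the argument on modified data built from, e.g., $\big([\uone,0],[\boldsymbol{0},0]\big)$, which tacitly assumes the discrete solution operator $\Lambda_{h}$ acts diagonally on components --- an assumption that is not justified, since the discrete system does not decouple. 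You instead bound the full error once in the product norm and insert a single componentwise competitor into the infimum of \eqref{Ineq Discrete Solution as Infimum}, which reads off each $\mathcal{O}(h)$ rate without any decoupling claim; both routes assume, implicitly or explicitly, the piecewise regularity of the exact solution needed for the interpolation estimates.

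The genuine divergence is the estimate $\Vert \ptwo - \ptwoh\Vert_{0,\Omega_{2}} = \mathcal{O}(h^{2})$. The paper dismisses it with ``the remaining statements are shown using the same scheme,'' but the same scheme cannot deliver it: the left-hand side of \eqref{Ineq Discrete Solution as Infimum} is the $\X\times\Y$-norm, which controls the pressure error only in $H^{1}(\Omega_{2})$, and the infimum on the right is itself only $\mathcal{O}(h)$ for piecewise linears, so no extra power of $h$ in the weaker norm can be extracted from that inequality. You correctly identified this and supplied the missing mechanism --- an Aubin--Nitsche duality argument, for which Galerkin orthogonality does hold here because the discretization is conforming and the discrete forms are restrictions of the continuous ones. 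Your argument is, however, conditional: it rests on $H^{2}$ regularity of a dual problem posed on the subdomains with the weak interface coupling, and you flag yourself that only $H^{1+s}$ regularity would degrade the rate to $\mathcal{O}(h^{1+s})$. This caveat is substantive, since the coupling conditions change type along $\Gamma$ and the subdomains in the paper's examples meet at corners, so full dual regularity is not automatic. In short: for the first four estimates you match the paper; for the fifth your route is different and more honest --- the paper's one-line claim does not actually prove the $\mathcal{O}(h^{2})$ rate, while your duality argument is the right idea but remains incomplete until the dual regularity is established.
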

\begin{proof}
In order to prove \textsc{Inequality} \eqref{Eq Velocity One Rate of Convergence} define $ \widetilde{F}_{1} \defining  \A[ \uone,\,0] + \B ' [ \boldsymbol{0},\, 0] $ and $ \widetilde{F}_{2} = - \B [\uone,\,0]  + \C [\boldsymbol{0},\,0] $. Again, due to \textsc{Theorem} \ref{Th weak formulation of continuos version} $ \big( [\uone, 0], [\boldsymbol{0}, 0]\big) $ is the unique solution to \textsc{Problem} \ref{Pblm operators weak continuous solution} with $ F_{i} $ replaced by $ \widetilde{F}_{i} $ for $ i = 1, 2 $. Applying \textsc{Inequality} \eqref{Ineq Discrete Solution as Infimum} yields
\begin{multline*} 
\big\Vert \big( [\uone, 0], [\boldsymbol{0}, 0]\big) 
- \big( [\uoneh, 0], [\boldsymbol{0}, 0]\big)\big\Vert_{\X\times \Y} \leq 
\\
(1 + M ) \inf\limits_{ ( [\voneh, \qtwoh], [\vtwoh, \qoneh] )\in \X_{h} \times \Y_{h}}
\big\Vert 
\big( [\uone, 0], [\boldsymbol{0}, 0]\big)
- \big( [\voneh, \qtwoh], [\vtwoh, \qoneh]\big)
\big\Vert_{\X\times \Y} .
\end{multline*}
Let $ \Pi_{h} $ be the global Raviart-Thomas interpolation operator, then $ \big( [\Pi_{h}\uone, 0], [\boldsymbol{0}, 0]\big) \in \X_{h}\times \Y_{h} $; recalling the inequality above it follows
\begin{equation*} 
\big\Vert \uone - \uoneh \big\Vert_{ \Hdiv(\Omega_{1}) } \leq 
(1 + M ) \big\Vert \uone - \Pi_{h}\uoneh \big\Vert_{ \Hdiv(\Omega_{1}) } 
\leq \mathcal{O}(h) .
\end{equation*}
In the expression above, the last inequality follows from standard finite element theory for interpolation operators, see \cite{GaticaMixed}. The remaining statements in \eqref{Eq Rates of Convergence} are shown using the same scheme.
\qed
\end{proof}
\begin{remark}
Observe that the rates of convergence summarized in \eqref{Eq Rates of Convergence} are all the standard ones, no gain or deterioration has been added by the scheme. This is because there were no strong coupling conditions in building the spaces, neither the continuous $ \X, \Y $, nor the discrete ones $ \X_{h}, \Y_{h} $. The interface exchange conditions are satisfied weakly, i.e., only by the solution of the problems \eqref{Pblm weak continuous solution} and \eqref{Pblm FEM operators weak continuous solution} respectively.
\end{remark}
%
%
%
%
%
%
%
%
%
\section{Numerical Examples}\label{Sec Numerical Example}
%
%
\begin{figure}[h] 
	\centering
	\begin{subfigure}
	[Basic experimentation domain $\Omega$ and map $\map$. ]
			{\includegraphics[scale = 0.6]{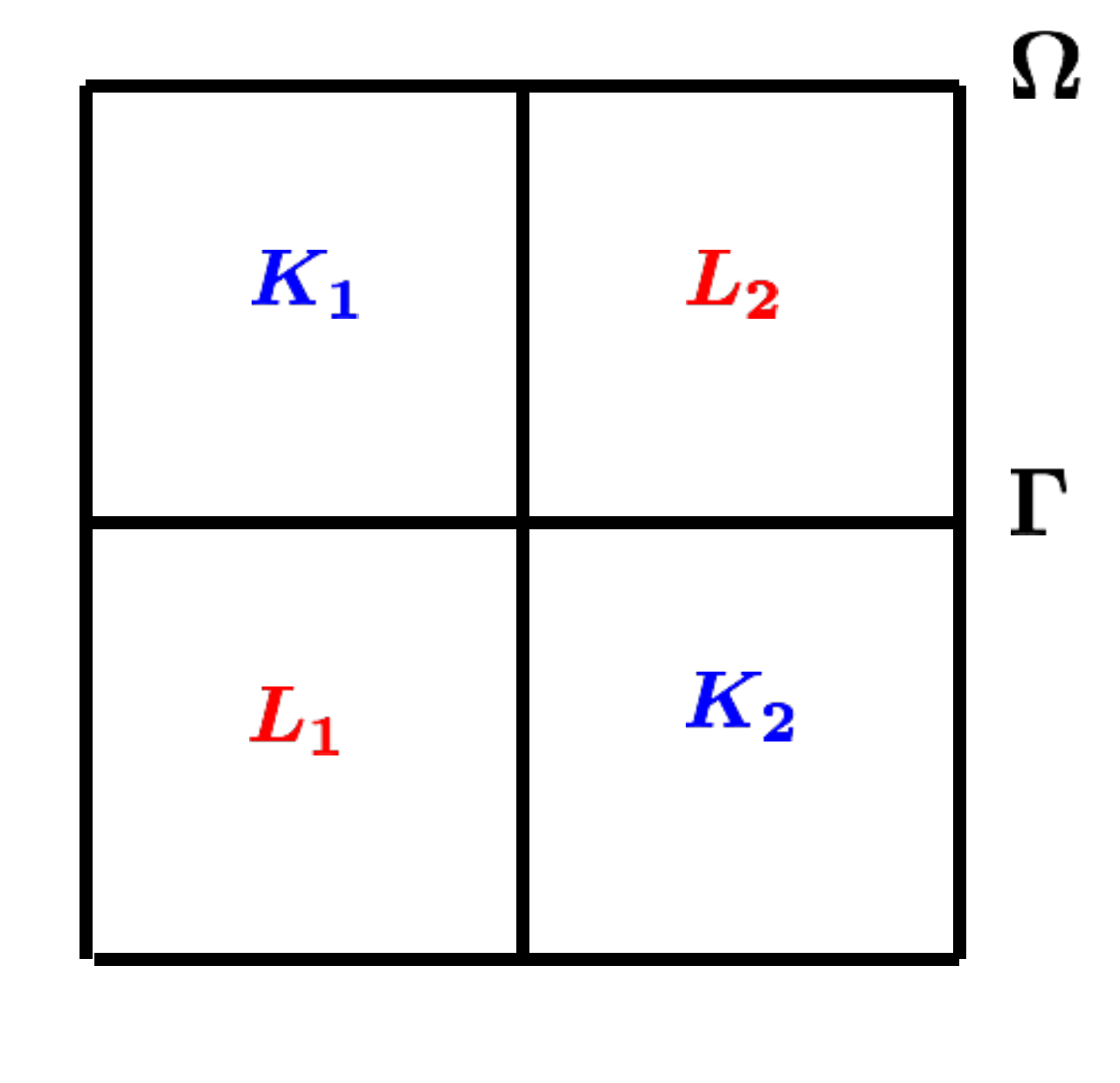} }
	\end{subfigure} 
	~ 
	\begin{subfigure}[Consistent grid example.]
			{\includegraphics[scale = 0.6]{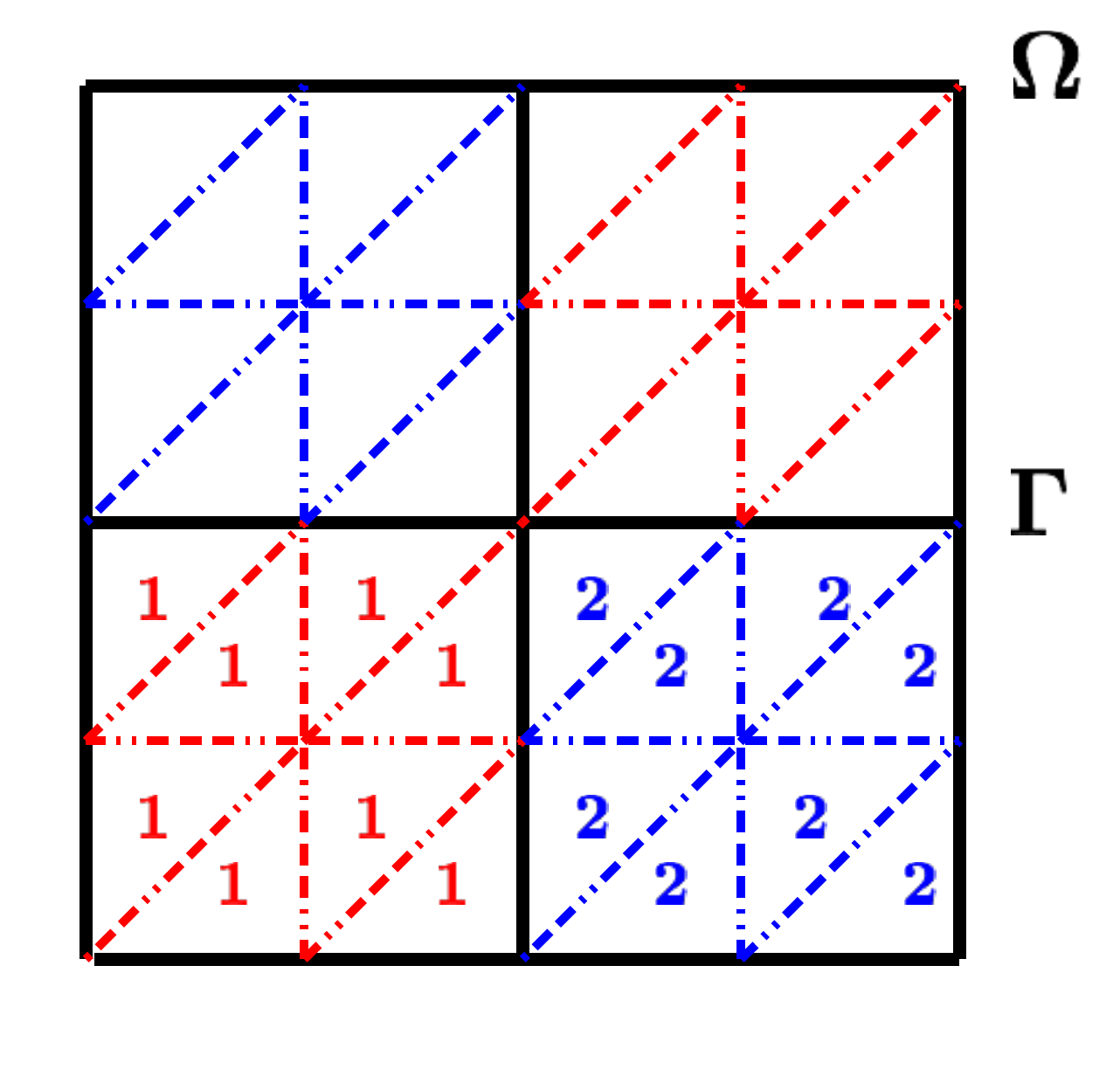} } 
	\end{subfigure} 
	%
	%
	%
	\caption{Figure (a) depicts a bipartite map $\map = (\mapone, \maptwo)$ example for the region $\Omega = [0,1] \times [0,1]$. Subregions belonging to $\map_{1}$ are red-colored and the subregions belonging to  $\map_{2}$ are blue-colored. 
	Figure (b) depicts an example of a grid $\triang$ consistent with the map $\map$. Some of the triangles belonging to $\tone$ and $\ttwo$ have been labeled with $1$ and $2$ respectively. \label{Fig Bipartite Map Grid Numerical Example} }
\end{figure}
In this section we present two numerical examples to illustrate the method, the first showing a case of continuity, the second a slight perturbation of the first to illustrate how the method handles discontinuities across interfaces. The numerical examples use the finite dimensional spaces $\X_{h}$, $\Y_{h}$ introduced in \eqref{Def finite function spaces}. The experiments are executed in a MATLAB script using adaptations of the codes \textbf{EBmfem.m} (see, \cite{Carstensen1}, \cite{CarstensenBahariawati}) and \textbf{fem2d.m} (see, \cite{CarstensenAlbertyFunken}, \cite{Carstensen2}).
\newline
\newline
For the sake of clarity, we adopt the domain $\Omega$, the interface $\Gamma$ and the subdomains $\Omega_{1}$, $\Omega_{2}$ as follows (see \textsc{Figure} \ref{Fig Bipartite Map Grid Numerical Example} (a))  
\begin{equation}\label{Def Geometric Parameters}
\begin{split} 
& \Omega \defining (-1,1) \times (-1,1) , 
\qquad \qquad\qquad \quad\quad
 \Gamma \defining (-1,1) \times \{0\} \cup  \{0\} \times  (-1, 1) , \\
& \Omega_{1} \defining (-1,0) \times  (-1, 0) \cup  (0, 1) \times  (0, 1)  ,\quad
\Omega_{2} \defining  (-1, 0) \times (0, 1)\cup  (0, 1) \times  (-1, 0) .
\end{split}
\end{equation}
Again, for simplicity, all the experiments run on the uniform Cartesian grid, see \textsc{Figure} \ref{Fig Bipartite Map Grid Numerical Example} (b). The sequence of grids $\{\triang^{i}: 0\leq i\leq 5 \}$ has correspongind sizes $ h_{i}^{-1} = 2^{i} $ for $0 \leq i \leq 5$; consequently it is a monotone sequence as described in \textsc{Definition} \ref{Def Consistent Grid}. The experimental computation for the order of convergence $r$, uses the standard approach. Assuming that the error satisfies $e = \mathcal{O}(h^{r})$, we approximate $r$ by
\begin{align*} 
& r \sim \frac{\log e_{k + 1} - \log e_{k}}{\log h_{k + 1} - \log h_{k} } 
=  \frac{\log e_{k} - \log e_{k + 1}}{\log 2 } ,&
& \text{for all }\, 0\leq k \leq 4 .
\end{align*}
In the expression above, the last equality holds due to the particular nature of the grids' size. 
\begin{example}\label{Ex Continuous Example}
The purpose of the present example is to illustrate how the method handles problems free of discontinuities across the interfaces. The exact solution in this case is given by
\begin{subequations}\label{Eqn Cont Exact Solutions}
\begin{align}\label{Eqn Exact Pressure}
& p: \Omega \rightarrow \R \, ,&
& p(x, y) = x \, y \, (x -1)^{2} (y - 1)^{2}(x + 1)^{2}(y + 1)^{2},  
\end{align}
\begin{align}\label{Eqn Exact Velocity}
& \u: \Omega\rightarrow \R^{2} \, , &
& \u(x, y) = -\grad p(x,y),
\end{align}
\end{subequations}
see \textsc{Figure} \ref{Fig Exact Solution Numerical Example}. The forcing terms are
\begin{subequations}\label{Eqn Forcing Terms}
\begin{equation}\label{Eqn Int Forcing Terms}
\begin{split}
\g: \Omega \rightarrow \R^{2}, \qquad\qquad\qquad &
\g = \boldsymbol{0} ,  \\
F: \Omega \rightarrow \R , \qquad\qquad\qquad &
F = -\div \grad p ,  
\end{split}
\end{equation}
\begin{align}\label{Eqn Interface Forcing Terms}
%
\stress, \flux: \Gamma \rightarrow \R , \qquad\qquad\qquad & 
\stress = 0 , 
& \flux  = 0 . 
%
\end{align}
\end{subequations}
\begin{figure}[t] 
	\centering
	\begin{subfigure}
	[Pressure Exact Solution.]
		{\resizebox{7.8cm}{8.0cm}
			{\includegraphics[scale = 0.33]{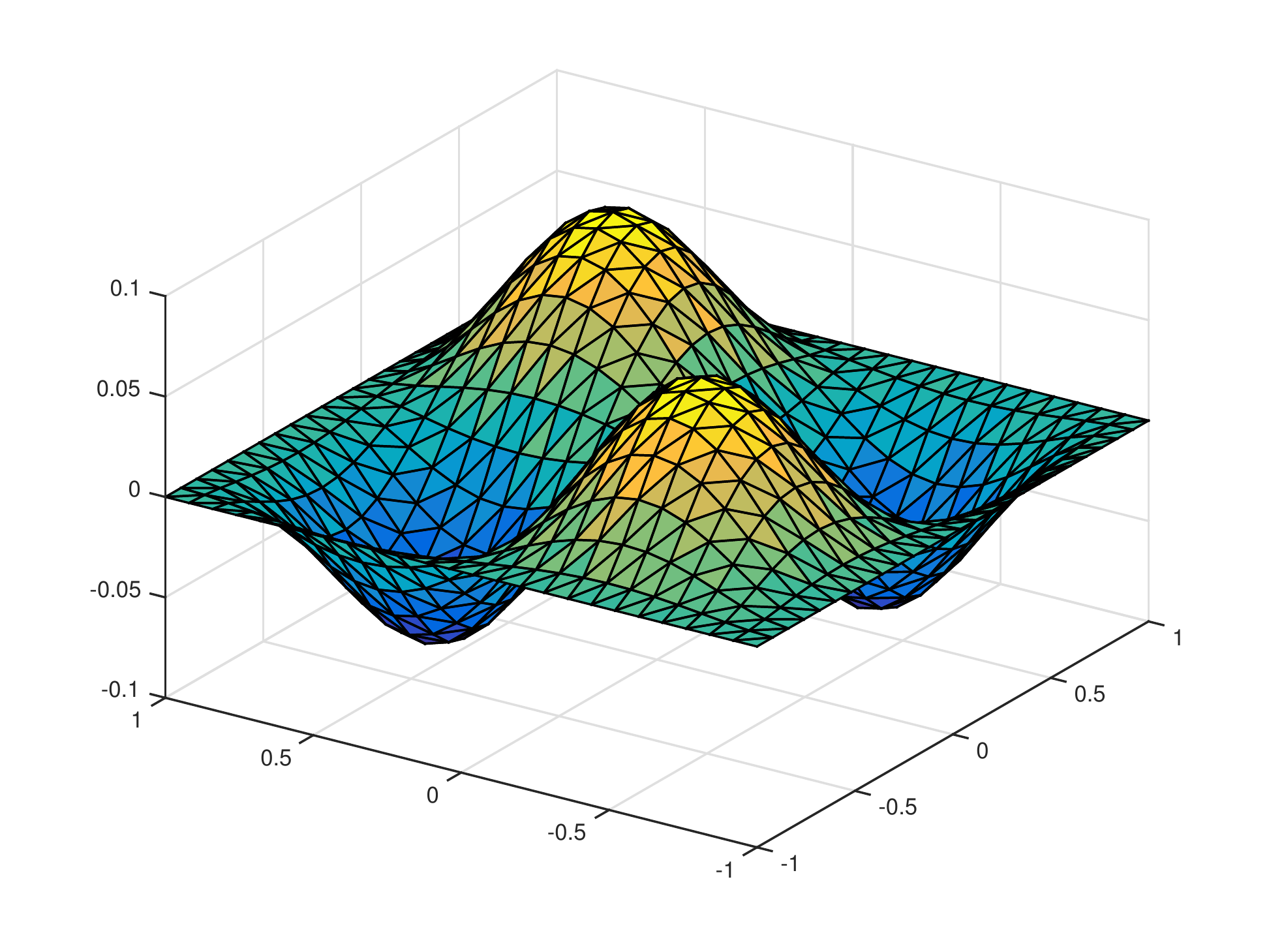} } 
			}
	\end{subfigure} 
	~ 
	\begin{subfigure}[Flux Exact Solution.]
		{\resizebox{7.8cm}{8.0cm}
			{\includegraphics{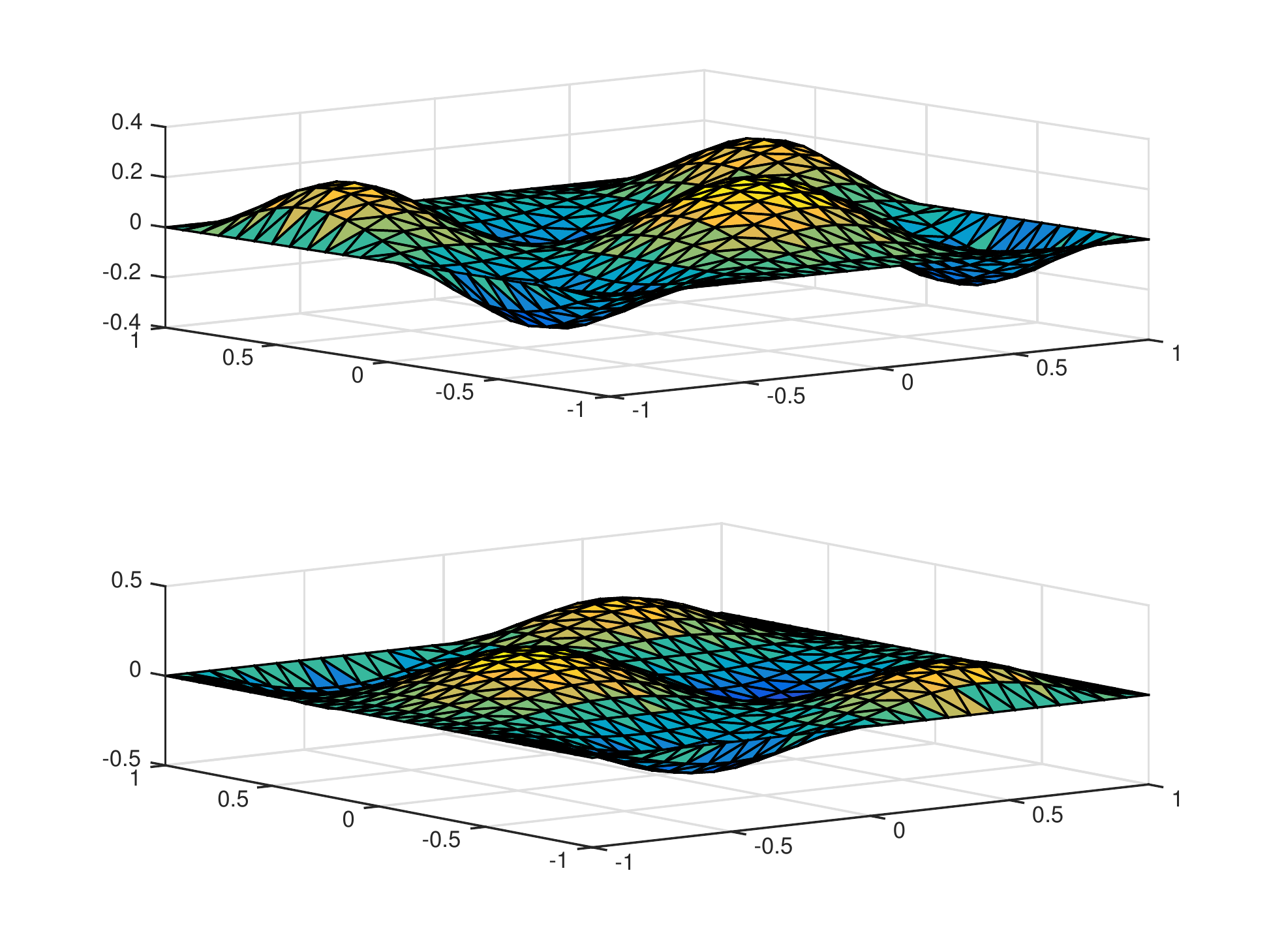} } 
			}                
	\end{subfigure} 
	%
	%
	%
	\caption{\textsc{Example} \ref{Ex Continuous Example}. Figure (a) depicts the pressure of the exact solution $p(x, y) =  x \, y \, (x -1)^{2} (y - 1)^{2}(x + 1)^{2}(y + 1)^{2}$, see \textsc{Equation} \eqref{Eqn Exact Pressure}.
	Figure (b) depicts the flux of the exact solution 
	$ \u = - \grad p 
	$, see \textsc{Equation} \eqref{Eqn Exact Velocity}. On the upper right corner is depicted the $\boldsymbol{x}$-component while the lower right corner displays the $\boldsymbol{y}$-component. \label{Fig Exact Solution Numerical Example} }
\end{figure}
\begin{figure}[h] 
	\centering
	\begin{subfigure}
	[Pressure Approximate Solution. ]
		{\resizebox{7.8cm}{8.0cm}
			{\includegraphics{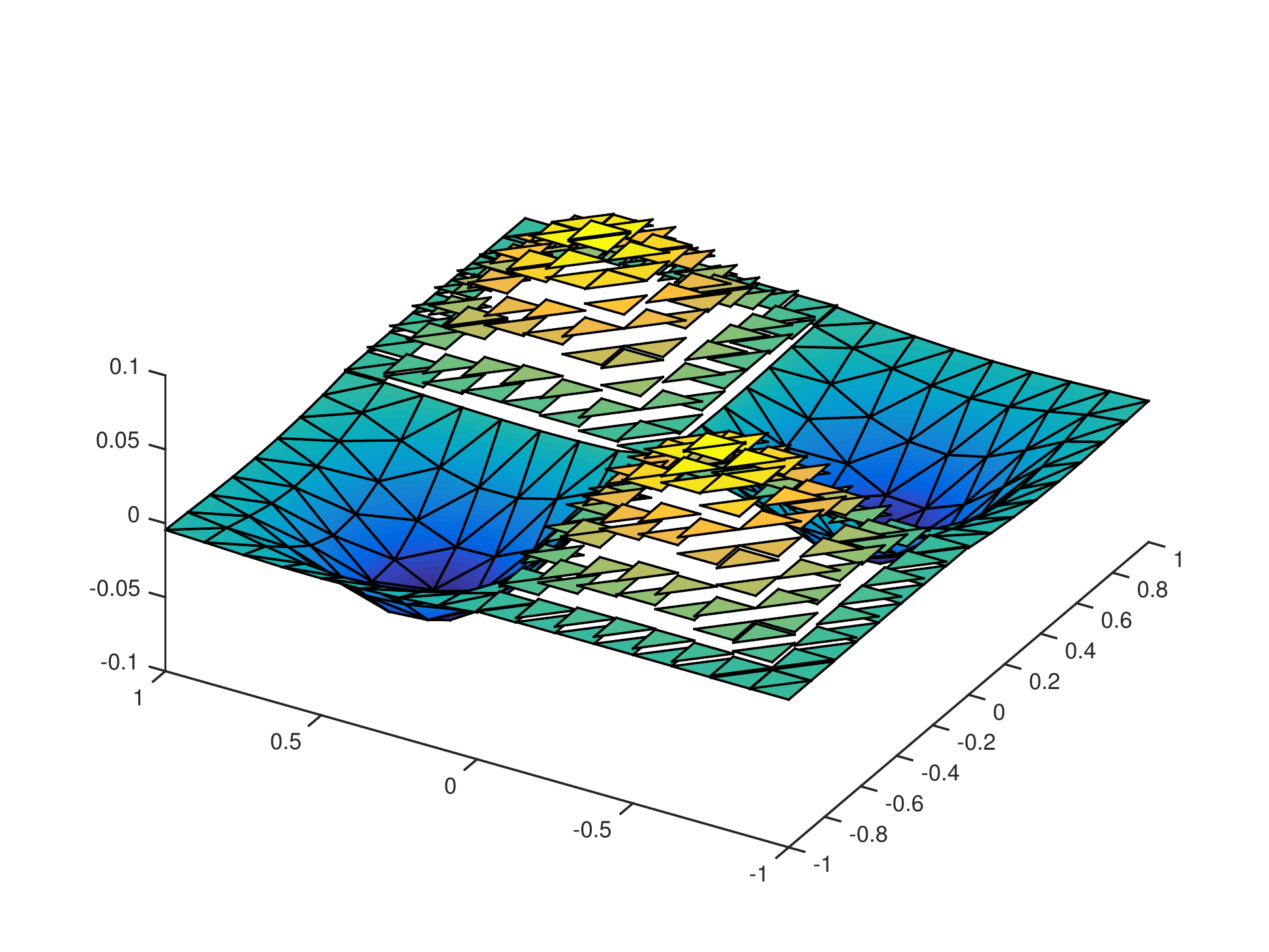} } 
			}
	\end{subfigure} 
	~ 
	\begin{subfigure}[Flux Approximate Solution.]
		{\resizebox{7.8cm}{8.0cm}
			{\includegraphics[scale = 0.33]{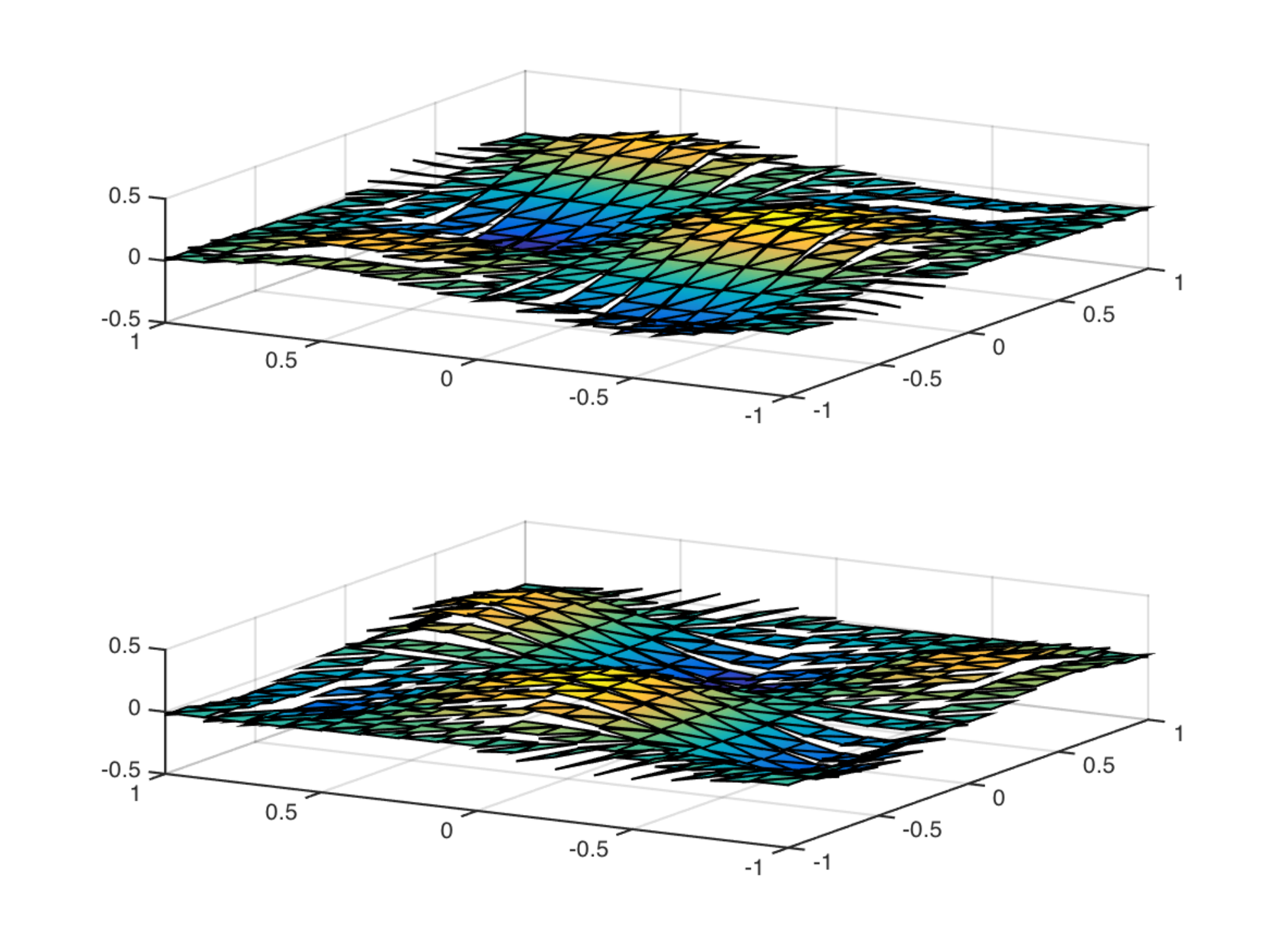} } 
			}                
	\end{subfigure} 
	%
	%
	%
	\caption{\textsc{Example} \ref{Ex Continuous Example}, approximate solution for a mesh of size $ h^{-1} = 8 $. The sub-domains are $\Omega_{1} = (-1, 0) \times (-1, 0)\cup  (0, 1) \times  (0, 1)$ and $\Omega_2 =  (-1, 0) \times (0, 1)\cup  (0, 1) \times  (-1, 0)$, see \textsc{Identity} \eqref{Def Geometric Parameters}.  Figure (a) depicts the pressure $p^{h}$ of the approximate solution, it is piecewise constant on the domain $\Omega_{1} $ and piecewise linear affine on the domain $\Omega_2$.
	Figure (b) depicts the flux of the approximate solution $\u^{h}$. On the upper right corner is depicted the $\boldsymbol{x}$-component of the flux 
	, which is continuous across \textbf{horizontal edges} of $\Omega_{1}$ and piecewise constant on the domain $\Omega_2$.	
	On the lower right corner we display 
	the $\boldsymbol{y}$-component of the flux, which is continuous across \textbf{vertical edges} of $\Omega_{1}$ and piecewise constant on the domain $\Omega_2$. \label{Fig Approximate Solution Numerical Example} }
\end{figure}
It is direct to see that $[\u, p]$ defined by \eqref{Eqn Cont Exact Solutions}
is the exact solution of the \textsc{Problem} \eqref{Eq porous media strong decomposed} on the geometric domain described by \eqref{Def Geometric Parameters} with the forcing terms defined in \eqref{Eqn Forcing Terms}. In particular, the boundary conditions \eqref{Eq Drained Condition decomposed}, \eqref{Eq Non-Flux Condition decomposed} and the interface exchange conditions \eqref{Eq normal flux balance}, \eqref{Eq normal stress balance} are satisfied.

The convergence results are displayed in the \textsc{Tables} \ref{Table Pressure Approximation} and \ref{Table Velocity Approximation} below, the convergence rate behaves as expected, except for $ \pone $, we have
\begin{subequations}\label{Stmt Numerical Rates of Convergence Ex 1}
\begin{align}\label{Stmt Pressure Numerical Rates of Convergence Ex 1}
& \Vert \poneh - \pone \Vert_{0, \Omega_{1}} = \mathcal{O}(h^{1.8}) ,&
& \Vert \ptwoh - \ptwo \Vert_{0, \Omega_{2}} = \mathcal{O}(h^{2}) , &
& \Vert \ptwoh - \ptwo \Vert_{1, \Omega_{2}} = \mathcal{O}(h) . 
\end{align}
\begin{align}\label{Stmt Velocity Numerical Rates of Convergence Ex 1}
& \Vert \uoneh - \uone \Vert_{0, \Omega_{1}} = \mathcal{O}(h) ,&
& \Vert \uoneh - \uone \Vert_{\Hdiv(\Omega_{1})} = \mathcal{O}(h) , &
& \Vert \utwoh - \utwo \Vert_{0, \Omega_{2}} = \mathcal{O}(h) . 
\end{align}
\end{subequations}
Finally, the numerical solution for $ h^{-1} = 8 $ is depicted in \textsc{Figure} \ref{Fig Approximate Solution Numerical Example}; the choice of the grid was based on optical clarity to illustrate both: the nature of discrete solution and its convergence to the continuous solution. 
%
\begin{table}[h!]
\caption{Pressures Convergence Table, \textsc{Example} \ref{Ex Continuous Example}}\label{Table Pressure Approximation}
\def\arraystretch{1.4}
\rowcolors{2}{gray!25}{white}
\begin{center}
\begin{tabular}{ c c c c c c c }
    \hline
    \rowcolor{gray!50}
$h ^{-1}$  
& $\Vert  \poneh- p_{1}  \Vert_{ 0, \Omega_{1} } $ 
& $r$  
& $\Vert  \ptwoh- p_{2}  \Vert_{0, \Omega_{2} }$ 
& $r$
& $\Vert  \ptwoh- p_{2}  \Vert_{1, \Omega_{2} }$ 
& $r$ \\ 
    \toprule
$1$ &   0.1836  &   &  2.8144  &  &   0.8643 &   \\
$ 2 $  &  0.0261  &  0.4383   &  0.0721  & 5.2867   & 0.1976  &  2.1289   \\
$ 4 $ &   0.0091  &  1.5201 &  0.0226  & 1.6737    &  0.0887  &   1.1556   \\
$ 8 $  &  0.0026  &    1.8074 &  0.0062  & 1.8660   &  0.0422  &  1.0717    \\ 
$ 16 $  &  0.0007  &     1.8931 &  0.0016  & 1.9542    &  0.0209  &  1.0137 \\ 
$ 32 $  &  0.0002  &     1.807  &  0.0004  & 2.0000   &  0.0104   &   1.0069  \\ 
    \hline
\end{tabular}
\end{center}
\end{table}
%
%
\begin{table*}[h!]
\caption{Velocities Convergence Table, \textsc{Example} \ref{Ex Continuous Example} }\label{Table Velocity Approximation}
\def\arraystretch{1.4}
\rowcolors{2}{gray!25}{white}
\begin{center}
\begin{tabular}{ c c c c c c c }
    \hline
    \rowcolor{gray!50}
$ h^{-1} $  
& $\Vert  \uone^{h}- \uone  \Vert_{ 0, \Omega_{1} } $ 
& $r$  
& $\Vert  \uone^{h}- \uone  \Vert_{\Hdiv(\Omega_{1})}$ 
& $r$
& $\Vert  \utwo^{h}- \utwo  \Vert_{ 0, \Omega_{2} } $ 
& $r$ \\ 
    \toprule
$ 1 $ &   0.8264 &    &   0.8264  &  &   0.9184  & \\
$ 2 $  &   0.1409  & 2.5522   &  0.1409  &  2.5522   &  0.1840   &  2.3194 \\
$ 4 $ &   0.0617  &   1.1913  &  0.0617  &   1.1913  &  0.0857   &  1.1023  \\
$ 8 $  &  0.0302  &   1.0307  &  0.0302  &   1.0307   &   0.0417 &  1.0392   \\ 
$ 16 $  &  0.0150  &  1.0096  &  0.0150  &  1.0096   &  0.0208  &  1.0035   \\ 
$ 32 $  &  0.0075  & 1.0000  &  0.0075   &  1.0000   &  0.0104  &  1.0000   \\ 
    \hline
\end{tabular}
\end{center}
\end{table*}
\end{example}
\begin{example}\label{Ex Discontinuous Example}
The present example is a perturbation of the previous one, in order to illustrate how the method handles problems with simultaneous discontinuities across the interfaces in both: the normal flux and the normal stress. The perturbation is localized on the fourth quadrant of the domain $(0, 1)\times (0, -1)$. The analytic solution in this case is given by
\begin{subequations}\label{Eqn Disc Exact Solutions}
\begin{equation}\label{Eqn Disc Exact Pressure} 
\begin{split}
p : \Omega \rightarrow & \R \, ,\\
p(x, y)  = \, & x \, y \, (x -1)^{2} (y - 1)^{2}(x + 1)^{2}(y + 1)^{2} \\
& + \frac{1}{20}\,\big( (x - 1)^2 - (y + 1)^2 \big)\ind_{(1,0)\times (0, -1)}(x,y)
, 
\end{split}
\end{equation}
\begin{align}\label{Eqn Disc Exact Velocity}
& \u: \Omega\rightarrow \R^{2} \, , &
& \u(x, y) = -\grad p(x,y), 
\end{align}
\end{subequations}
see \textsc{Figure} \ref{Fig Disc Exact Solution Numerical Example}. The forcing terms are acting inside the domains are identical to the previous example,
\begin{subequations}\label{Eqn Disc Forcing Terms}
\begin{equation}\label{Eqn Disc Interior Forcing Terms}
\begin{split}
\g: \Omega \rightarrow \R^{2}, \qquad\qquad\qquad &
\g = \boldsymbol{0} ,  \\
F: \Omega \rightarrow \R , \qquad\qquad\qquad &
F = -\div \grad p ,  
\end{split}
\end{equation}
In this case, the interface forcing terms account for the jumps of the solution across the interface, i.e., according to the interface exchange conditions \eqref{Eq normal flux balance}, \eqref{Eq normal stress balance} , $\stress$ and $\flux$ are given by 
\begin{equation}\label{Eqn Disc Interface Forcing Terms}
\begin{split}
\stress & :  \Gamma \rightarrow \R ,  \\
\stress (x, y) & =  \frac{1}{20} \big(  (x - 1)^2 - 1 \big)\ind_{(0,1)\times \{0\}} (x, y)
+ \frac{1}{20} \big( 1 - (y + 1)^2  \big)\ind_{ \{0\} \times (-1,0) } (x, y), \\
\flux & : \Gamma \rightarrow \R , \\
\flux  (x, y)  & =  \frac{1}{20} \big(  x - 4 \big)\ind_{(0,1)\times \{0\}} (x, y)
+ \frac{1}{20} \big( 4 - y )  \big)\ind_{ \{0\} \times (-1,0) } (x, y) . 
\end{split}
\end{equation}
\end{subequations}
\begin{figure}[t] 
	\centering
	\begin{subfigure}
	[Discontinuous Pressure Exact Solution. ]
		{\resizebox{7.8cm}{8.0cm}
			{\includegraphics[scale = 0.15]{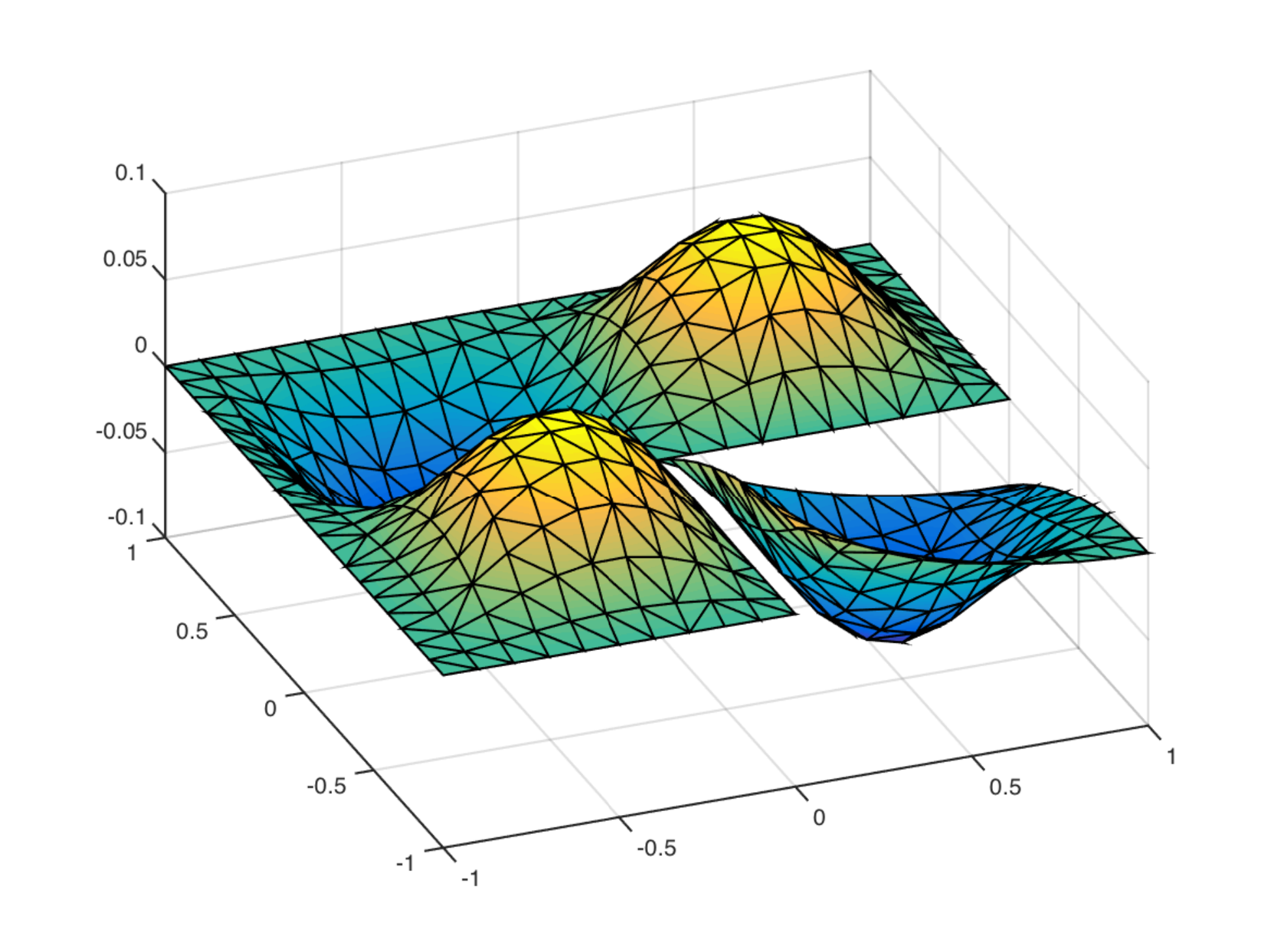} } 
			}
	\end{subfigure} 
	~ 
	\begin{subfigure}[Discontinuous Flux Exact Solution.]
		{\resizebox{7.8cm}{8.0cm}
			{\includegraphics{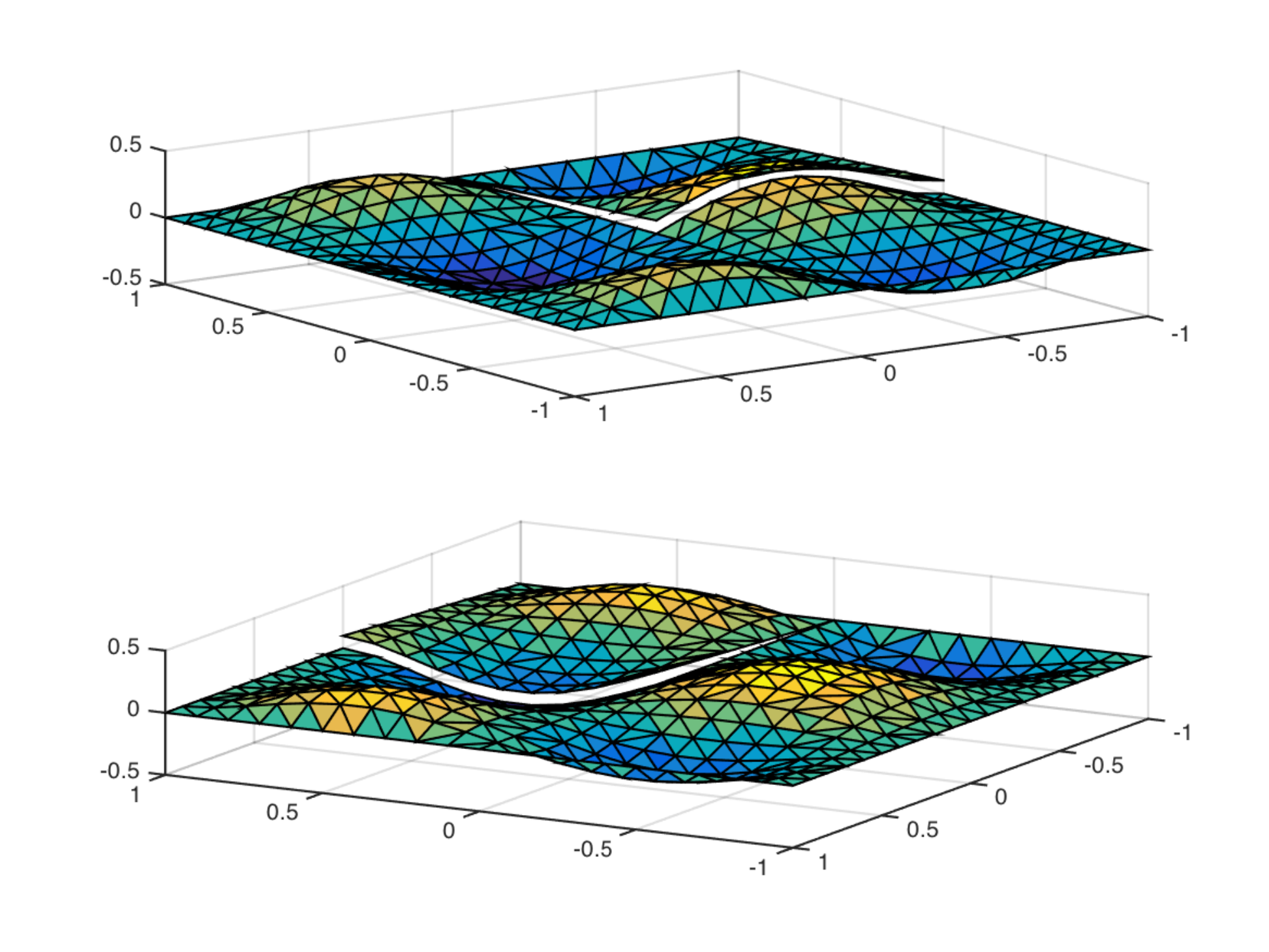} } 
			}                
	\end{subfigure} 
	%
	%
	%
	\caption{\textsc{Example} \ref{Ex Discontinuous Example}, discontinuous exact solution. The visualization angles are different for the pressure and the flux; the choice is made focusing on the jumps of discontinuity. The discontinuities take place on the interface subset $\{0\}\times (-1,0) \cup (0,1)\times \{0\} $ for the pressure $ p $ as well as both components of the velocity $ \u_{x} $, $ \u_{y} $.
	Figure (a) depicts the pressure of the exact solution $p$, see \textsc{Equation} \eqref{Eqn Disc Exact Pressure}.
	Figure (b) depicts the flux of the exact solution 
	$ \u = - \grad p 
	$, see \textsc{Equation} \eqref{Eqn Disc Exact Velocity}. On the upper right corner is depicted the $\boldsymbol{x}$-component while the lower right corner displays the $\boldsymbol{y}$-component. \label{Fig Disc Exact Solution Numerical Example} }
\end{figure}
\begin{figure}[h] 
	\centering
	\begin{subfigure}
	[Pressure Approximate Solution. ]
		{\resizebox{7.8cm}{8.0cm}
			{\includegraphics{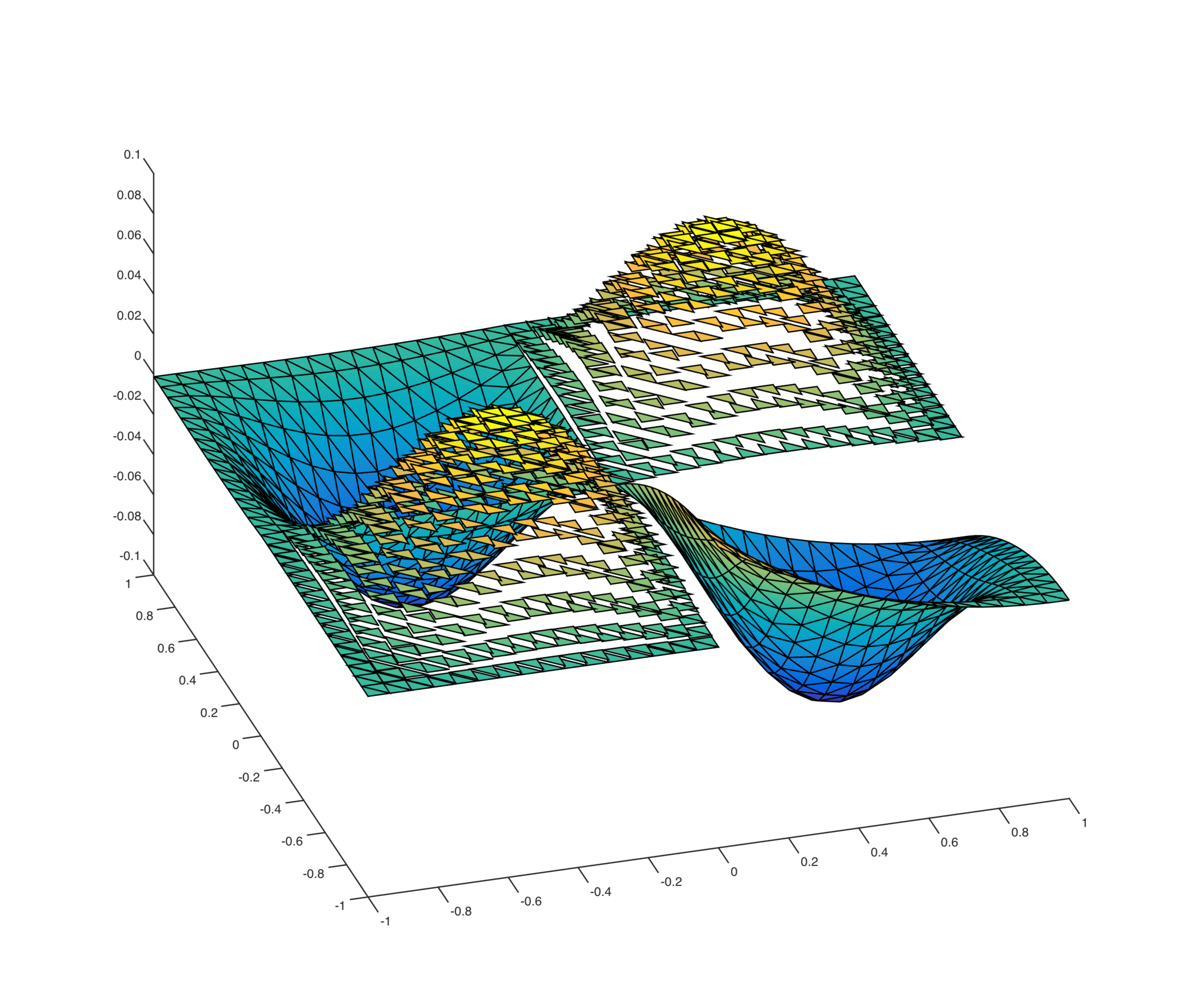} } 
			}
	\end{subfigure} 
	~ 
	\begin{subfigure}[Flux Approximate Solution.]
		{\resizebox{7.8cm}{8.0cm}
			{\includegraphics{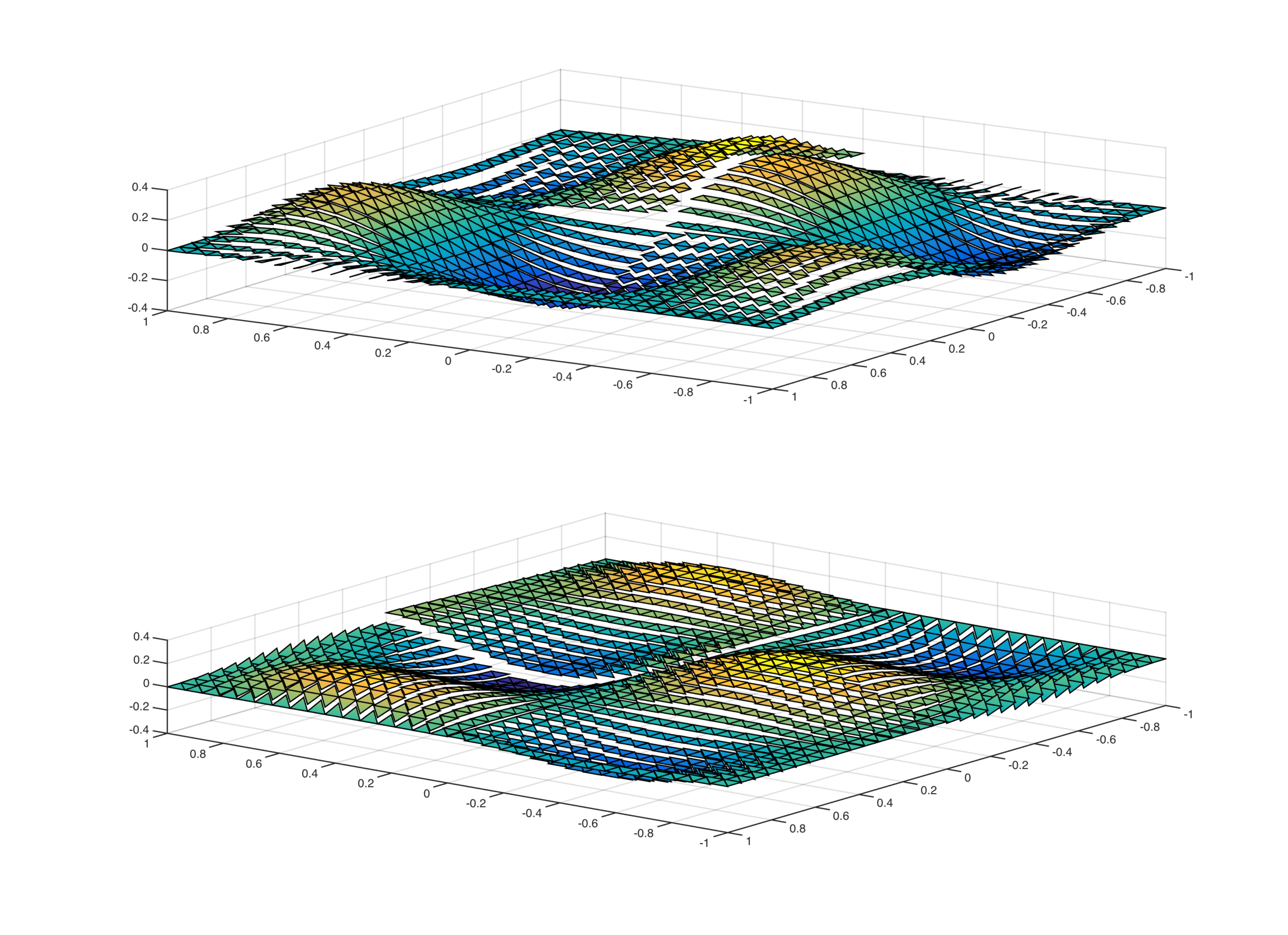} }
			 }                
	\end{subfigure} 
	%
	%
	%
	\caption{\textsc{Example} \ref{Ex Discontinuous Example}, approximate solution for a mesh of size $ h^{-1} = 16 $. The visualization angles are different for the pressure and the flux; the choice is made focusing on the jumps of discontinuity. Notice how the method captures the discontinuities for the pressure $ p $ and both components of the velocity $ \u_{x} $, $ \u_{y} $. The jumps take place on the interface subset $\{0\}\times (-1,0) \cup (0,1)\times \{0\} $.  Figure (a) depicts the pressure $p^{h}$ of the approximate solution.
	Figure (b) depicts the flux of the approximate solution $\u^{h}$. On the upper right corner is depicted the $\boldsymbol{x}$-component of the flux and, on the lower right corner we display 
	the $\boldsymbol{y}$-component of the flux. 
	\label{Fig Disc Approximate Solution Numerical Example} }
\end{figure}
It is direct to see that $[\u, p]$ defined by \eqref{Eqn Disc Exact Solutions}
is the exact solution to the \textsc{Problem} \eqref{Eq porous media strong decomposed} on the geometric domain described by \eqref{Def Geometric Parameters} with the forcing terms defined in \eqref{Eqn Disc Forcing Terms}. Again, the boundary and interface conditions are satisfied.

The convergence results are displayed in the \textsc{Tables} \ref{Table Disc Pressure Approximation} and \ref{Table Disc Velocity Approximation} below. The convergence behavior is virtually identical to the continuous case with observable differences (\textsc{Tables} \ref{Table Pressure Approximation} and \ref{Table Velocity Approximation}) only for the first stages. Consequently, the convergence rate agree with those presented in \textsc{Equation} \eqref{Stmt Numerical Rates of Convergence Ex 1}.
Finally, the numerical solution for $ h^{-1} = 16 $ is depicted in \textsc{Figure} \ref{Fig Disc Approximate Solution Numerical Example}; the choices of grid as well as display angle, were based on optical clarity for the jumps across the interface.
%
\begin{table}[h!]
\caption{Pressures Convergence Table, \textsc{Example} \ref{Ex Discontinuous Example}}\label{Table Disc Pressure Approximation}
\def\arraystretch{1.4}
\rowcolors{2}{gray!25}{white}
\begin{center}
\begin{tabular}{ c c c c c c c }
    \hline
    \rowcolor{gray!50}
$h^{-1}$  
& $\Vert  \poneh- p_{1}  \Vert_{ 0, \Omega_{1} } $ 
& $r$  
& $\Vert  \ptwoh- p_{2}  \Vert_{0, \Omega_{2} }$ 
& $r$
& $\Vert  \ptwoh- p_{2}  \Vert_{1, \Omega_{2} }$ 
& $r$ \\ 
    \toprule
$ 1 $ &   0.9984  & &  1.6520  & &  2.5140  & \\
$ 2 $  &  0.0261  & 5.2575 &  0.0721  & 4.5181   &  0.1980  & 3.6664  \\
$ 4 $ &   0.0091  & 1.5201 &  0.0226  & 1.6737    &  0.0889  & 1.1552  \\
$ 8 $  &  0.0026  & 1.8074 &  0.0062  & 1.8660    &  0.0423  & 1.0715  \\ 
$ 16 $  &  0.0007  & 1.8931 &  0.0016  & 1.9542   &  0.0209 & 1.0172   \\ 
$ 32 $  &  0.0002  &  1.8074  &  0.0004  & 2.0000  &  0.0104   & 1.0069  \\ 
    \hline
\end{tabular}
\end{center}
\end{table}
%
%
\begin{table*}[h!]
\caption{Velocities Convergence Table, \textsc{Example} \ref{Ex Discontinuous Example}}\label{Table Disc Velocity Approximation}
\def\arraystretch{1.4}
\rowcolors{2}{gray!25}{white}
\begin{center}
\begin{tabular}{ c c c c c c c }
    \hline
    \rowcolor{gray!50}
$ h^{-1} $  
& $\Vert  \uone^{h}- \uone  \Vert_{ 0, \Omega_{1} } $ 
& $r$  
& $\Vert  \uone^{h}- \uone  \Vert_{\Hdiv(\Omega_{1})}$ 
& $r$
& $\Vert  \utwo^{h}- \utwo  \Vert_{ 0, \Omega_{2} } $ 
& $r$ \\ 
    \toprule
$ 1 $ &   19.8825 & &  19.8825  & &   9.8017  & \\
$ 2 $  &   0.1409  & 7.1407  &  0.1409  & 7.1407  &   0.1844 & 5.7321 \\
$ 4 $ &   0.0617  & 1.1913  &  0.0617  & 1.1913  &  0.0860  & 1.1004 \\
$ 8 $  &  0.0302  & 1.0307  &  0.0302  & 1.0307  &  0.0418 & 1.0408 \\ 
$ 16 $  &  0.0150 & 1.0096  &   0.0150  & 1.0096  &  0.0208 & 1.0069 \\ 
$ 32 $  &  0.0075  & 1.0000  &  0.0075   & 1.0000 &  0.0104  & 1.0000 \\ 
    \hline
\end{tabular}
\end{center}
\end{table*}
\end{example}
\begin{example}\label{Ex Multiscale Continuous Example}
The purpose of the present example is to illustrate how the method handles problems with flux discontinuities across the interfaces. Such discontinuities occur because the flow resistance coefficient $ a(\cdot) $, has different orders of magnitude within regions $ \Omega_{1} $ and $ \Omega_{2} $. 
For clarity of exposition we use the same pressure as in \textsc{Example} \ref{Ex Continuous Example}, i.e., the exact solution, see \textsc{Figure} \ref{Fig Multiscale Exact Solution Numerical Example}, is given by
\begin{subequations}\label{Eqn Mult Cont Exact Solutions}

\begin{align}\label{Eqn Mult Cont Exact Pressure}
& p: \Omega \rightarrow \R \, ,&
& p(x, y) = x \, y \, (x -1)^{2} (y - 1)^{2}(x + 1)^{2}(y + 1)^{2},  
\end{align}
\begin{align}\label{Eqn Mult Cont Exact Velocity}
& \u: \Omega\rightarrow \R^{2} \, , &
& \u(x, y) = -\frac{1}{a(x, y)}\,\grad p(x,y).
\end{align}
Here, the flow resistance coefficient is defined as
\begin{equation}\label{Eqn Mult Cont Coefficient}
a(x, y) \defining \ind_{ \Omega_{1} } (x, y) + 5 \ind_{ \Omega_{2} }(x, y) ,
\end{equation}
\end{subequations}
in particular, it satisfies \textsc{Hypothesis} \ref{Hyp non null local storage coefficient}. The forcing terms are
\begin{subequations}\label{Eqn Mult Cont Forcing Terms}
\begin{equation}\label{Eqn Int Mult Cont Forcing Terms}
\begin{split}
\g: \Omega \rightarrow \R^{2}, \qquad\qquad\qquad &
\g = \boldsymbol{0} ,  \\
F: \Omega \rightarrow \R , \qquad\qquad\qquad &
F = -\div \frac{1}{a}\, \grad p ,  
\end{split}
\end{equation}
\begin{equation}\label{Eqn Interface Mult Cont Forcing Terms}
\begin{split}
\stress &: \Gamma \rightarrow \R , 
\quad
\stress (x, y) = 0 ,\\ 
\flux &: \Gamma \rightarrow \R , 
\quad
\flux (x,y)  = 
\frac{4}{5}\,x\,(x^{2} - 1)^{2}\ind_{ (-1,1)\times\{0\} }  
+  \frac{4}{5}\,y\,(y^{2} - 1)^{2} \ind_{\{0\} \times (-1,1)}  
\end{split}
\end{equation}
\end{subequations}
\begin{figure}[t] 
	\centering
	\begin{subfigure}
	[Pressure Exact Solution.]
		{\resizebox{7.8cm}{8.0cm}
			{\includegraphics[scale = 0.33]{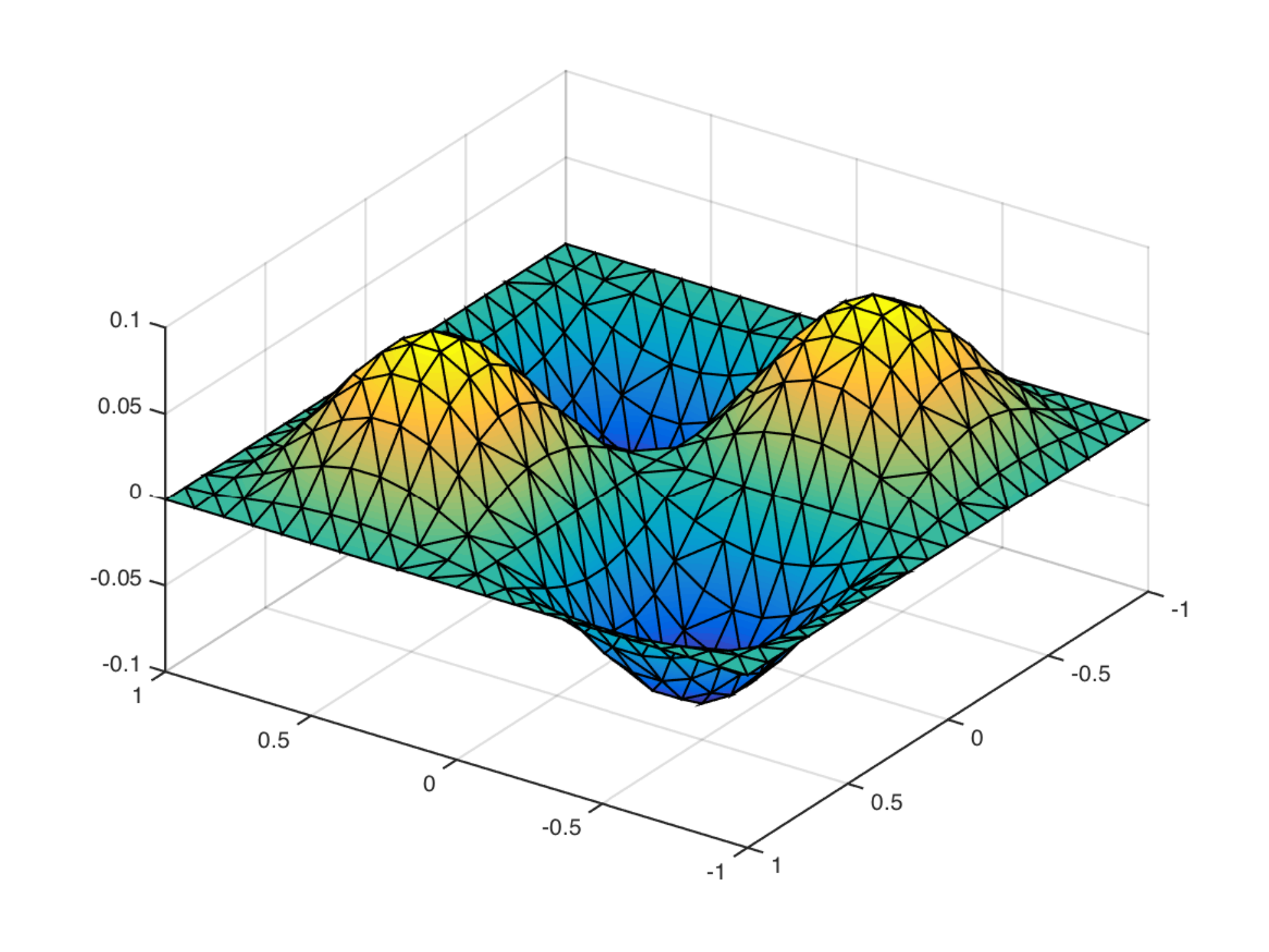} } 
			}
	\end{subfigure} 
	~ 
	\begin{subfigure}[Flux Exact Solution.]
		{\resizebox{7.8cm}{8.0cm}
			{\includegraphics{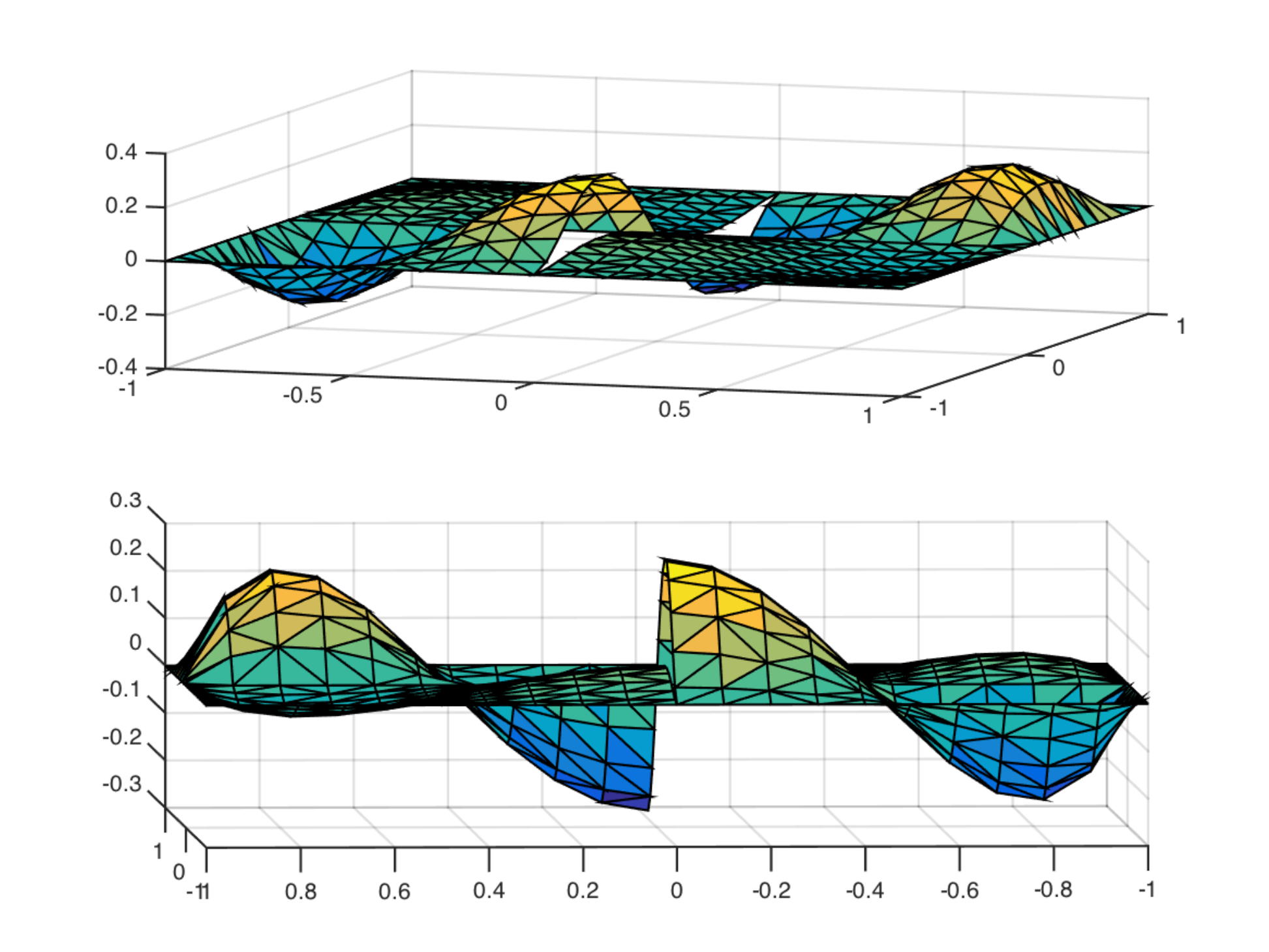} } 
			}                
	\end{subfigure} 
	%
	%
	%
	\caption{\textsc{Example} \ref{Ex Multiscale Continuous Example}. Figure (a) depicts the pressure of the exact solution $ p(x, y) =  x \, y \, (x -1)^{2} (y - 1)^{2}(x + 1)^{2}(y + 1)^{2} $, see \textsc{Equation} \eqref{Eqn Exact Pressure}.
	Figure (b) depicts the flux of the exact solution 
	$ \u = -a^{-1}\,\grad p 
	$, see \textsc{Equation} \eqref{Eqn Exact Velocity}. On the upper right corner is depicted the $\boldsymbol{x}$-component while the lower right corner displays the $\boldsymbol{y}$-component. Here, discontinuities occur only for the velocity due to the flow resistance coefficient $a(\cdot) $, see \textsc{Equation} \eqref{Eqn Mult Cont Coefficient}. The velocity's $ x $-component $ \u_{x} $ has a jump across $ \{0\} \times(-1, 1) $ while the $ y $-component $ \u_{y} $ jumps across $ (-1, 1) \times \{0\} $, see \textsc{Equation} \eqref{Eqn Interface Mult Cont Forcing Terms} for the jumps' exact algebraic expression. \label{Fig Multiscale Exact Solution Numerical Example} }
\end{figure}
\begin{figure}[h] 
	\centering
	\begin{subfigure}
	[Pressure Approximate Solution. ]
		{\resizebox{7.8cm}{8.0cm}
			{\includegraphics{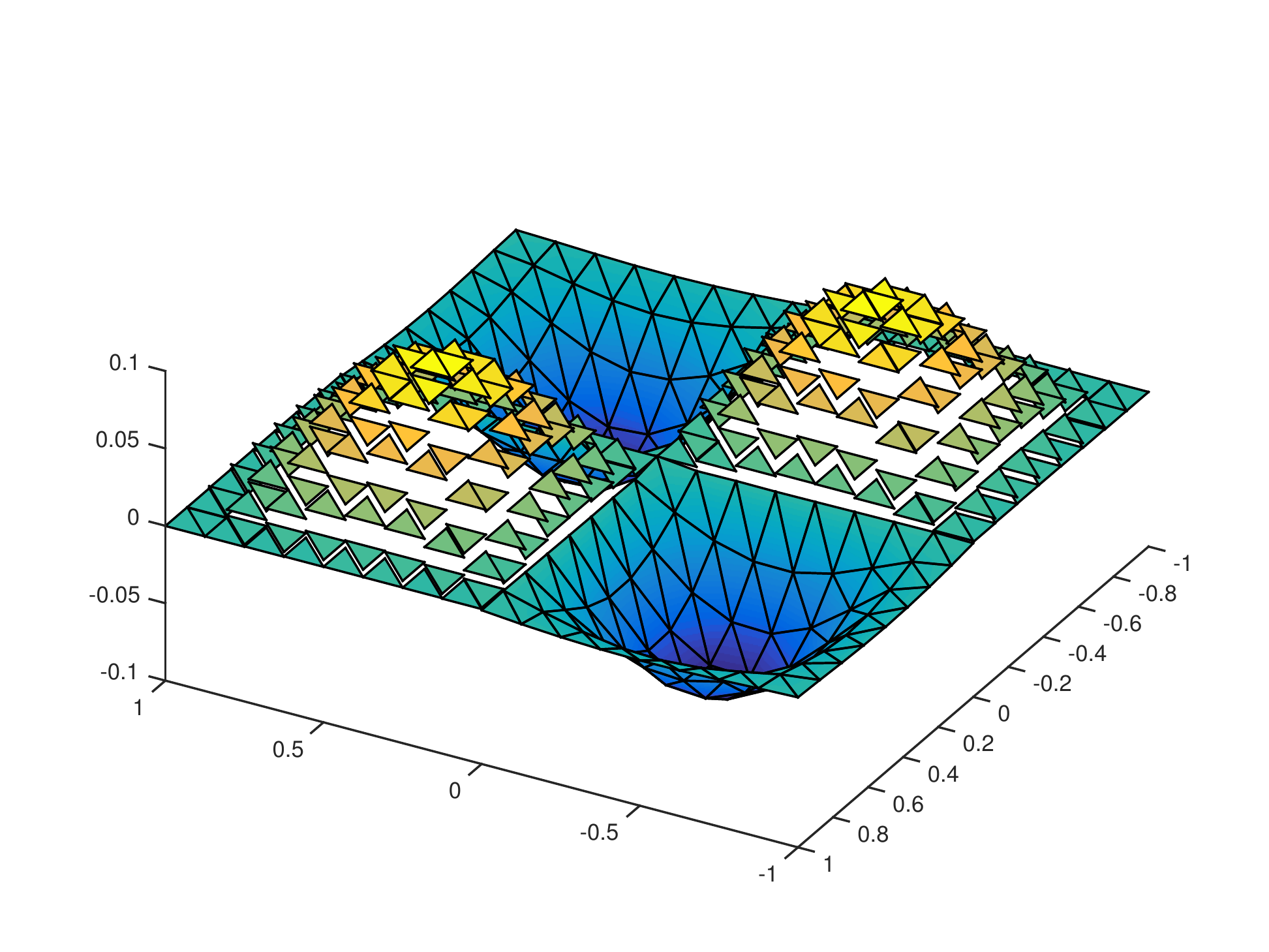} } 
			}
	\end{subfigure} 
	~ 
	\begin{subfigure}[Flux Approximate Solution.]
		{\resizebox{7.8cm}{8.0cm}
			{\includegraphics[scale = 0.33]{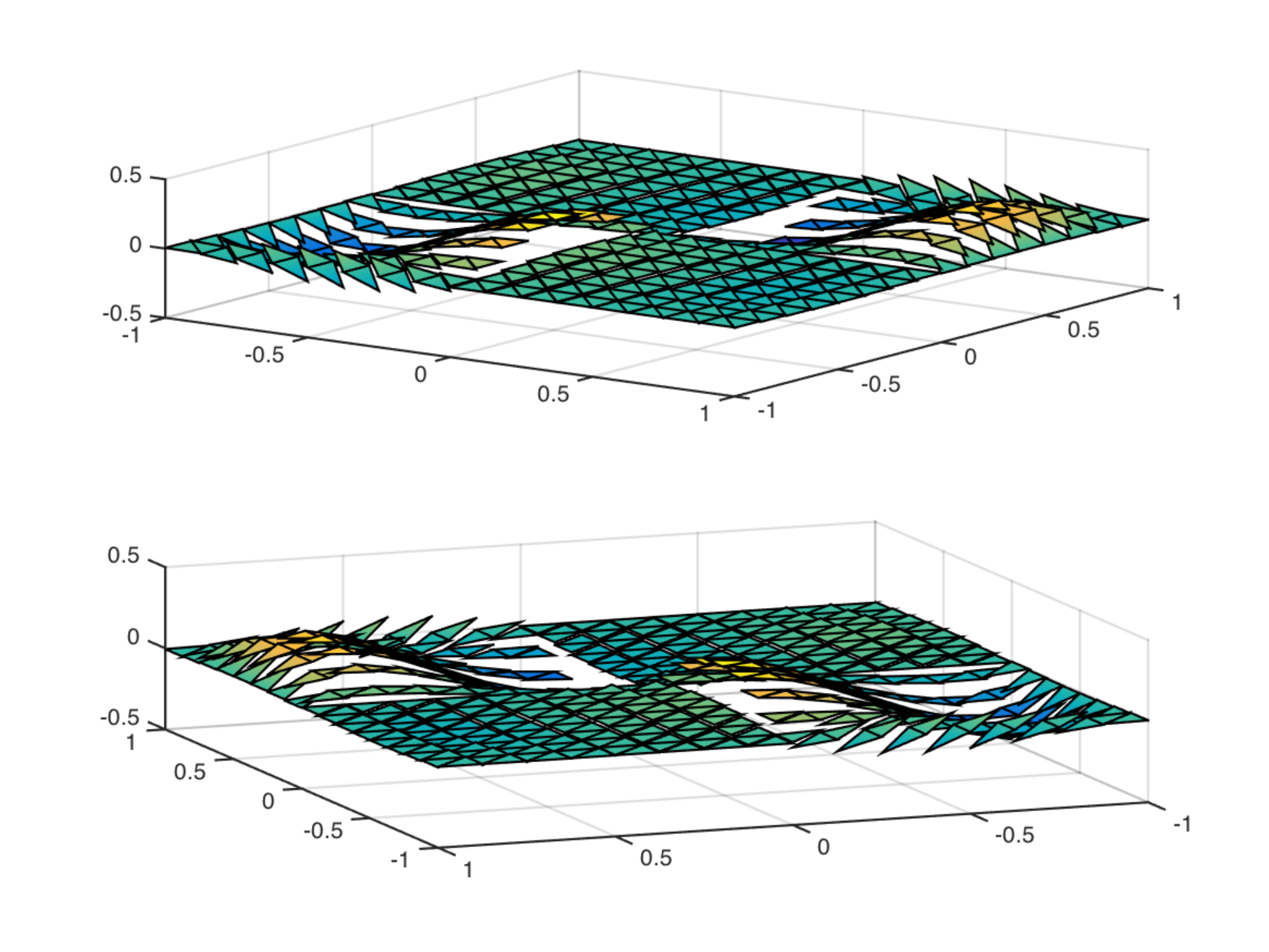} } 
			}                
	\end{subfigure} 
	%
	%
	%
	\caption{\textsc{Example} \ref{Ex Multiscale Continuous Example}, approximate solution for a mesh of size $ h^{-1} = 8 $. The sub-domains are $\Omega_{1} = (-1, 0) \times (-1, 0)\cup  (0, 1) \times  (0, 1)$ and $\Omega_2 =  (-1, 0) \times (0, 1)\cup  (0, 1) \times  (-1, 0)$, see \textsc{Identity} \eqref{Def Geometric Parameters}.  Figure (a) depicts the pressure $p^{h}$ of the approximate solution, it is piecewise constant on the domain $\Omega_{1} $ and piecewise linear affine on the domain $\Omega_2$.
	Figure (b) depicts the flux of the approximate solution $\u^{h}$. On the upper right corner is depicted the $\boldsymbol{x}$-component of the flux, which is continuous across \textbf{horizontal edges} of $\Omega_{1}$ and piecewise constant on the domain $\Omega_2$.	
	On the lower right corner it is displayed
	the $\boldsymbol{y}$-component of the flux, which is continuous across \textbf{vertical edges} of $\Omega_{1}$ and piecewise constant on the domain $\Omega_2$. Observe that the jumps across the interface are captured for both components of the velocity, $ \u_{x} $ has a jump across $ \{0\} \times(-1, 1) $ while $ \u_{y} $ jumps across $ (-1, 1) \times \{0\} $. \label{Fig Multiscale Cont Approximate Solution Numerical Example} }
\end{figure}
A direct calculation shows that $[\u, p]$ defined by \eqref{Eqn Mult Cont Exact Solutions}
is the exact solution of \textsc{Problem} \eqref{Eq porous media strong decomposed}, on the geometric domain described by \eqref{Def Geometric Parameters} with the forcing terms defined in \eqref{Eqn Mult Cont Forcing Terms}. The flux jump $ \flux (x,y) $ occurs because of the jump in the flow resistance coefficient $ a(\cdot) $ described in  \eqref{Eqn Mult Cont Coefficient}; should $ a(\cdot) $ be a continuous function the interface flux term would be null i.e., $ \flux (x,y)  \equiv 0 $. Once more, the boundary conditions \eqref{Eq Drained Condition decomposed}, \eqref{Eq Non-Flux Condition decomposed} and the interface exchange conditions \eqref{Eq normal flux balance}, \eqref{Eq normal stress balance} are satisfied.

The convergence results are displayed in the \textsc{Tables} \ref{Table Mult Pressure Approximation} and \ref{Table Mult Velocity Approximation} below. While the velocity's behavior is the expected one (it agrees with \eqref{Stmt Velocity Numerical Rates of Convergence Ex 1}), the pressure shows mild differences with \eqref{Stmt Pressure Numerical Rates of Convergence Ex 1}
%
%
%
\begin{align}\label{Stmt Pressure Numerical Rates of Convergence Ex 3}
& \Vert \poneh - \pone \Vert_{0, \Omega_{1}} = \mathcal{O}(h^{1.6}) ,&
& \Vert \ptwoh - \ptwo \Vert_{0, \Omega_{2}} = \mathcal{O}(h^{2.1}) , &
& \Vert \ptwoh - \ptwo \Vert_{1, \Omega_{2}} = \mathcal{O}(h^{1}) 
\end{align}
%
%
Finally, the numerical solution for $ h^{-1} =8 $ is depicted in \textsc{Figure}  \ref{Fig Multiscale Cont Approximate Solution Numerical Example}; the choices of grid and display angle were based on optical clarity to illustrate the pressure of \textsc{Example} \ref{Ex Continuous Example} from a different point of view and to get a neat picture of the flux jumps across the interface. 
%
\begin{table}[h!]
\caption{Pressures Convergence Table, \textsc{Example} \ref{Ex Multiscale Continuous Example}}\label{Table Mult Pressure Approximation}
\def\arraystretch{1.4}
\rowcolors{2}{gray!25}{white}
\begin{center}
\begin{tabular}{ c c c c c c c }
    \hline
    \rowcolor{gray!50}
$ h^{-1} $  
& $\Vert  \poneh- p_{1}  \Vert_{ 0, \Omega_{1} } $ 
& $r$  
& $\Vert  \ptwoh- p_{2}  \Vert_{0, \Omega_{2} }$ 
& $r$
& $\Vert  \ptwoh- p_{2}  \Vert_{1, \Omega_{2} }$ 
& $r$ \\ 
    \toprule
$ 1 $ &   0.0246  &   &  0.1252  &  &   0.4376 &   \\
$ 2 $  &  0.0104  &  1.2421   &  0.0464  & 1.4320   & 0.1905  &  1.1998   \\
$ 4 $ &   0.0046  &  1.1769 &  0.0182  & 1.3502    &  0.0903  &   1.0770   \\
$ 8 $  &  0.0013  &    1.8231 &  0.0052  & 1.8074   &  0.0427  &  1.0805    \\ 
$ 16 $  &  0.0003  &     2.1155 &  0.0013  & 2.0000    &  0.0209  &  1.0307 \\ 
$ 32 $  &  0.0001  &     1.5850  &  0.0003  & 2.1155   &  0.0104   &   1.0069  \\ 
    \hline
\end{tabular}
\end{center}
\end{table}
%
%
\begin{table*}[h!]
\caption{Velocities Convergence Table, \textsc{Example} \ref{Ex Multiscale Continuous Example}}\label{Table Mult Velocity Approximation}
\def\arraystretch{1.4}
\rowcolors{2}{gray!25}{white}
\begin{center}
\begin{tabular}{ c c c c c c c }
    \hline
    \rowcolor{gray!50}
$ h^{-1} $  
& $\Vert  \uone^{h}- \uone  \Vert_{ 0, \Omega_{1} } $ 
& $r$  
& $\Vert  \uone^{h}- \uone  \Vert_{\Hdiv(\Omega_{1})}$ 
& $r$
& $\Vert  \utwo^{h}- \utwo  \Vert_{ 0, \Omega_{2} } $ 
& $r$ \\ 
    \toprule
$ 1 $ &   0.1884 &    &   0.1884  &  &   0.2092  & \\
$ 2 $  &   0.1212  &  0.6364   &  0.1212  &  0.6364   &  0.0370   &  2.4993 \\
$ 4 $ &   0.0567  &  1.0960  &  0.0567  &   1.0960  &  0.0177   &  1.0638  \\
$ 8 $  &  0.0292  &   0.9574  &  0.0292  &   0.9574   &   0.0085 &  1.0582   \\ 
$ 16 $  &  0.0148  &  0.9804  &  0.0148  &  0.9804   &  0.0042  &  1.0171   \\ 
$ 32 $  &  0.0074  & 1.0000  &  0.0074   &  1.0000   &  0.0021  &  1.0000   \\ 
    \hline
\end{tabular}
\end{center}
\end{table*}
\end{example}
\begin{example}\label{Ex Multiscale Jump Example}
The purpose of the present example is twofold: illustrate how the method handles problems whose velocities drastically change across the interface but still are continuous functions, this is done in controlled/lab conditions, and suggest a heuristic method to proceed in practice i.e., when the real solution is not known. Such abrupt change takes place because the flow resistance coefficient $ a (\cdot) $, defined in  \textsc{Equation} \eqref{Eqn Mult Cont Coefficient}, has different orders of magnitude within regions $ \Omega_{1} $ and $ \Omega_{2} $. Although the exact solution is continuous on the velocity from the theoretical point of view, because of the multiscaling introduced by $ a( \cdot ) $, it is more convenient/strategic to treat it as discontinuous across the interface as the method does (see \textsc{Figure}  \ref{Fig Multiscale Jump Approximate Solution Numerical Example} below), to avoid numerical instability. In this example, the exact solution is given by (see \textsc{Figure} \ref{Fig Multiscale Jump Exact Solution Numerical Example})
\begin{subequations}\label{Eqn Mult Jump Exact Solutions}
\begin{align}\label{Eqn Mult Jump Exact Pressure}
& p: \Omega \rightarrow \R \, ,&
& p(x, y) = \sin^{2}\Big( \frac{\pi}{2}(x - 1)\Big)
 \sin^{2}\Big( \frac{\pi}{2}(y - 1)\Big),  
\end{align}
\begin{align}\label{Eqn Mult Jump Exact Velocity}
& \u: \Omega\rightarrow \R^{2} \, , &
& \u(x, y) = -\frac{1}{a(x, y)}\,\grad p(x,y),
\end{align}
%
\end{subequations}
Here, the flow resistance coefficient $ a(\cdot) $ is defined by \textsc{Equation} \eqref{Eqn Mult Cont Coefficient}. The forcing terms are
\begin{subequations}\label{Eqn Mult Jump Forcing Terms}
\begin{equation}\label{Eqn Int Mult Jump Forcing Terms}
\begin{split}
\g: \Omega \rightarrow \R^{2}, \qquad\qquad\qquad &
\g = \boldsymbol{0} ,  \\
F: \Omega \rightarrow \R , \qquad\qquad\qquad &
F = -\div \frac{1}{a}\, \grad p ,  
\end{split}
\end{equation}
\begin{equation}\label{Eqn Interface Mult Jump Forcing Terms}
\begin{split}
\stress &: \Gamma \rightarrow \R , 
\qquad\qquad\qquad  
\stress (x, y) = 0 ,\\ 
\flux &: \Gamma \rightarrow \R , \\
\flux (x,y) & = 
- \sin^{2}\Big(\frac{\pi}{2}(x - 1)\Big)\ind_{ (-1,1)\times\{0\} }  
-  \sin^{2}\Big(\frac{\pi}{2}(y - 1)\Big)\ind_{\{0\} \times (-1,1)} 
\end{split}
\end{equation}
\end{subequations}
\begin{figure}[t] 
	\centering
	\begin{subfigure}
	[Pressure Exact Solution.]
		{\resizebox{7.8cm}{8.0cm}
			{\includegraphics[scale = 0.33]{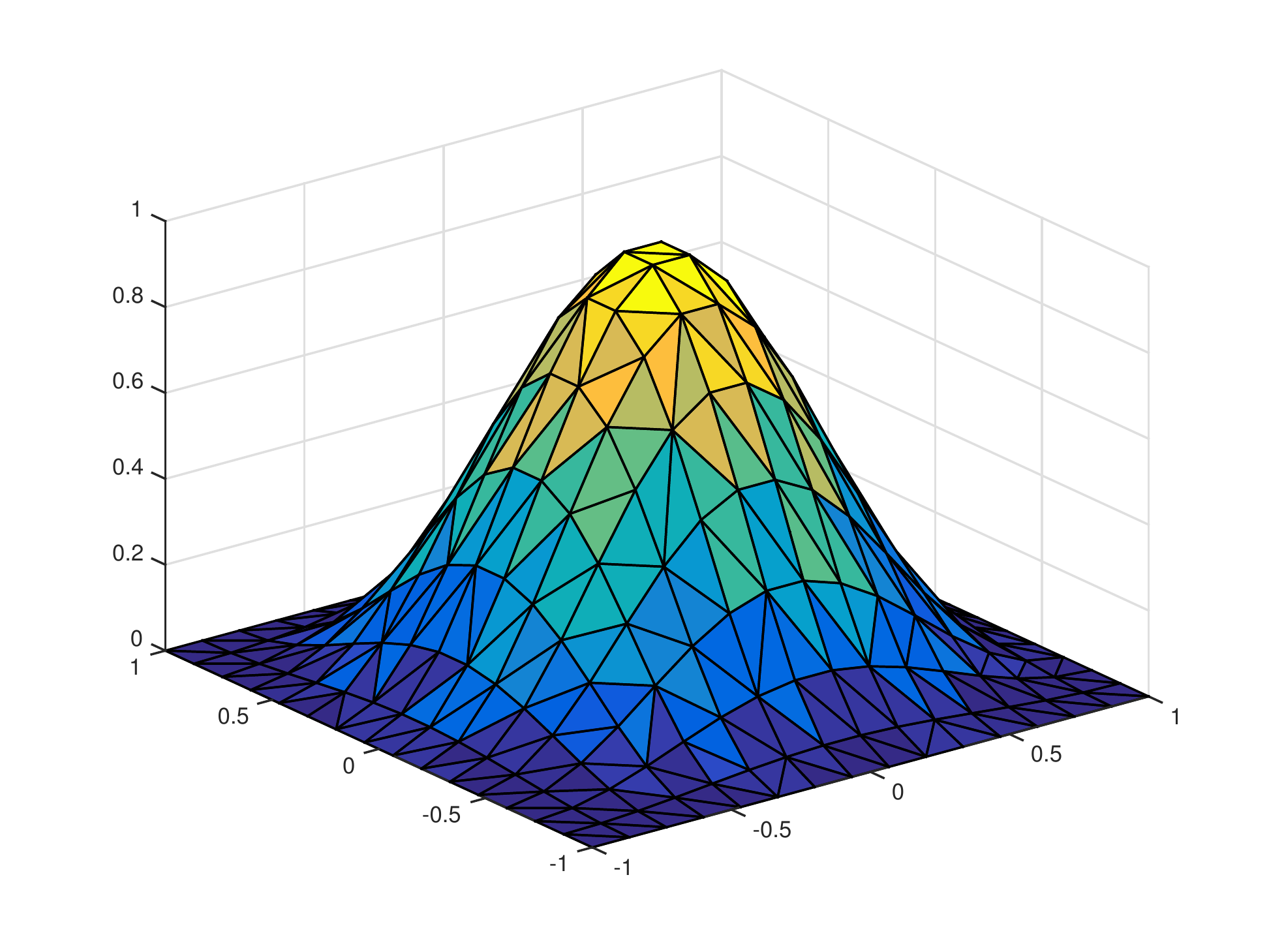} } 
			}
	\end{subfigure} 
	~ 
	\begin{subfigure}[Flux Exact Solution.]
		{\resizebox{7.8cm}{8.0cm}
			{\includegraphics{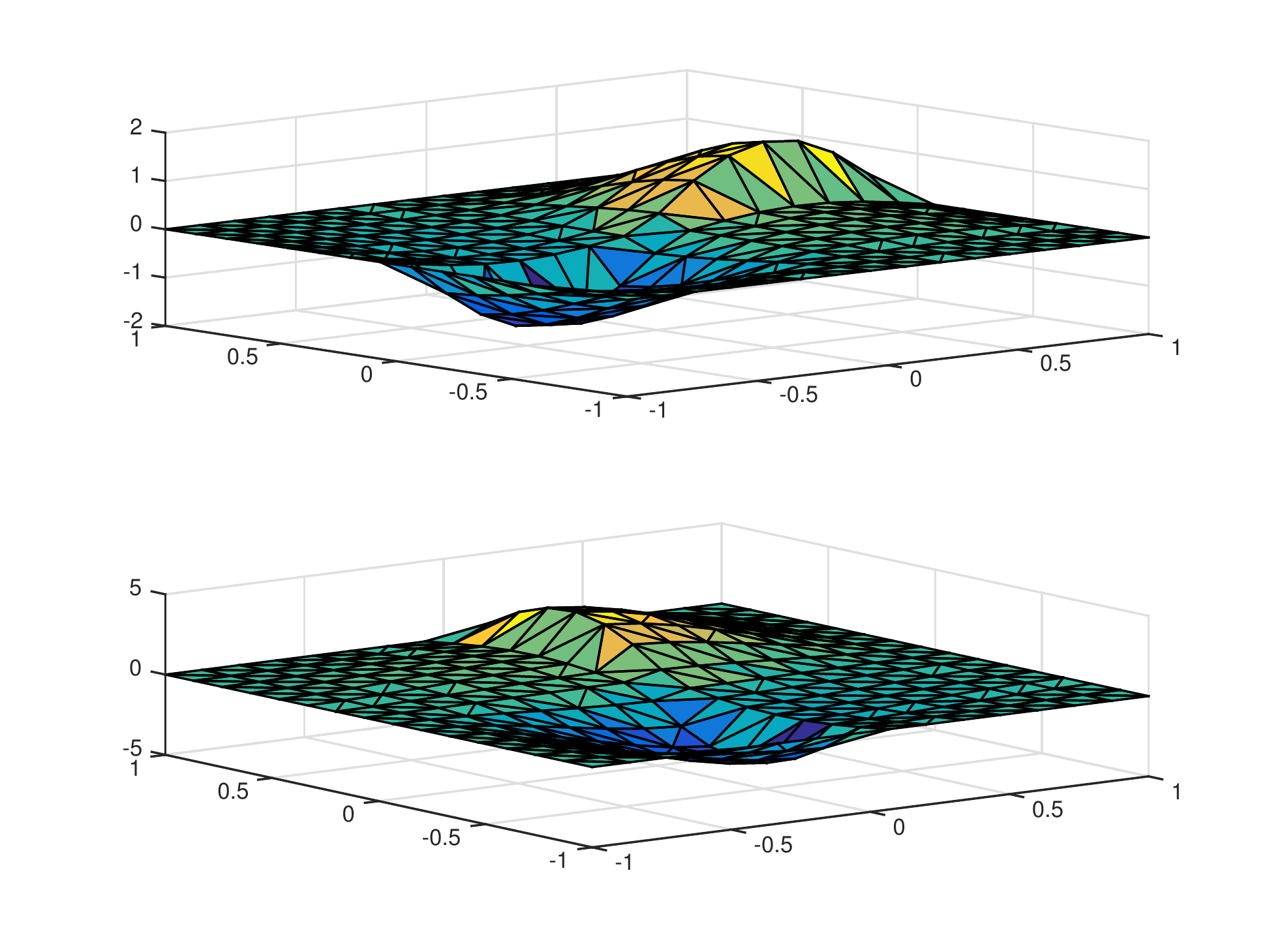} } 
			}                
	\end{subfigure} 
	%
	%
	%
	\caption{\textsc{Example} \ref{Ex Multiscale Jump Example}. Figure (a) depicts the pressure of the exact solution $ p(x, y) = \sin^{2}\big( \frac{\pi}{2}(x - 1)\big)
 \sin^{2}\big( \frac{\pi}{2}(y - 1)\big) $, see \textsc{Equation} \eqref{Eqn Exact Pressure}.
	Figure (b) depicts the flux of the exact solution 
	$ \u = - a^{-1}\grad p 
	$, see \textsc{Equation} \eqref{Eqn Exact Velocity}. On the upper right corner is depicted the $\boldsymbol{x}$-component while the lower right corner displays the $\boldsymbol{y}$-component. Observe the abrupt changes of $ \u_{x} $ across $ \{0\}\times (-1,1) $ and $ \u_{y} $ across $ (-1,1)\times\{0\} $ due to the multiscaling of the flow resistance coefficient $ a(\cdot) $, see \textsc{Equation} \eqref{Eqn Mult Cont Coefficient}. 
	\label{Fig Multiscale Jump Exact Solution Numerical Example} }
\end{figure}
\begin{figure}[h] 
	\centering
	\begin{subfigure}
	[Pressure Approximate Solution. ]
		{\resizebox{7.8cm}{8.0cm}
			{\includegraphics{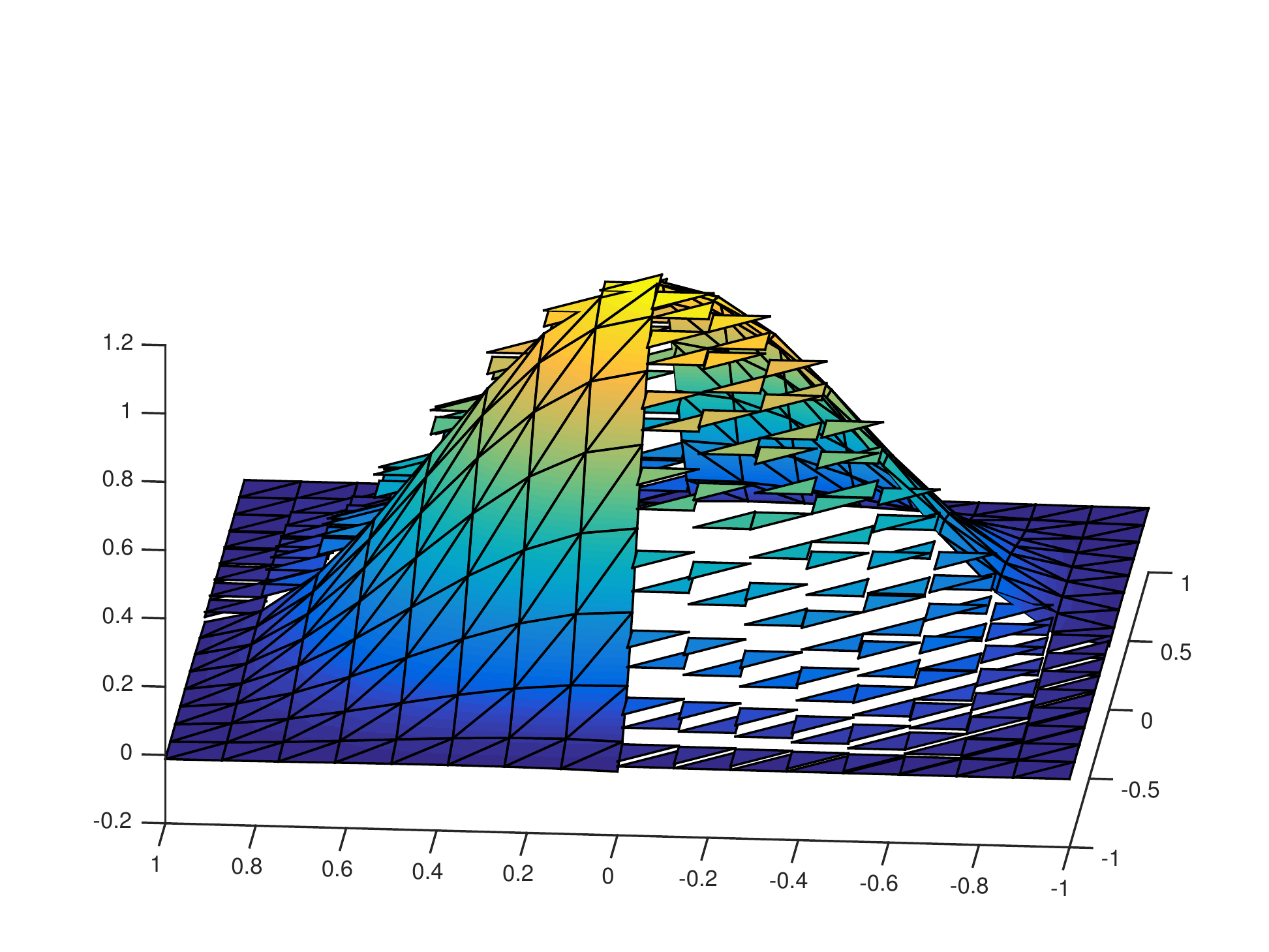} } 
			}
	\end{subfigure} 
	~ 
	\begin{subfigure}[Flux Approximate Solution.]
		{\resizebox{7.8cm}{8.0cm}
			{\includegraphics[scale = 0.33]{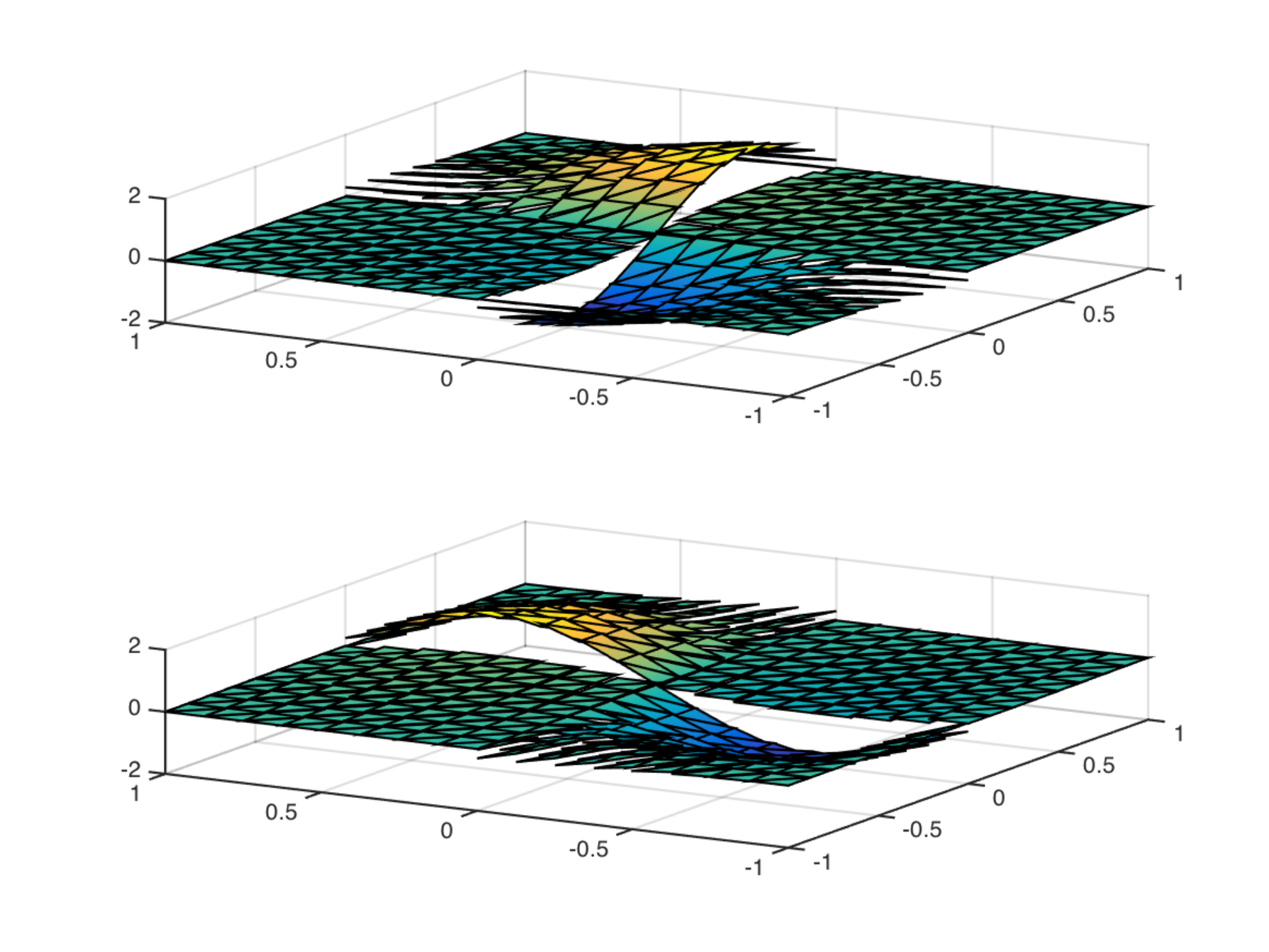} } 
			}                
	\end{subfigure} 
	%
	%
	%
	\caption{\textsc{Example} \ref{Ex Multiscale Jump Example}, approximate solution for a mesh of size $ h^{-1} = 8 $, the sub-domains are $\Omega_{1} = (-1, 0) \times (-1, 0)\cup  (0, 1) \times  (0, 1)$ and $\Omega_2 =  (-1, 0) \times (0, 1)\cup  (0, 1) \times  (-1, 0)$, see \textsc{Identity} \eqref{Def Geometric Parameters}.  Figure (a) depicts the pressure $p^{h}$ of the approximate solution, it is piecewise constant on the domain $\Omega_{1} $ and piecewise linear affine on the domain $\Omega_2$.
	Figure (b) depicts the flux of the approximate solution $\u^{h}$. On the upper right corner is depicted the $\boldsymbol{x}$-component of the flux 
	, which is continuous across \textbf{horizontal edges} of $\Omega_{1}$ and piecewise constant on the domain $\Omega_2$.	
	On the lower right corner we display 
	the $\boldsymbol{y}$-component of the flux, which is continuous across \textbf{vertical edges} of $\Omega_{1}$ and piecewise constant on the domain $\Omega_2$. Notice that the abrupt changes of $ \u_{x} $ across $ \{0\}\times (-1,1) $ and $ \u_{y} $ across $ (-1,1)\times\{0\} $ 
	in the exact solution (see \textsc{Figure} \ref{Fig Multiscale Jump Exact Solution Numerical Example}), are now understood as numerical jumps. \label{Fig Multiscale Jump Approximate Solution Numerical Example} }
\end{figure}
A direct calculation shows that $[\u, p]$ defined by \eqref{Eqn Mult Cont Exact Solutions}
is the exact solution of the \textsc{Problem} \eqref{Eq porous media strong decomposed} on the geometric domain described by \eqref{Def Geometric Parameters} with the forcing terms defined in \eqref{Eqn Mult Cont Forcing Terms}. Once more, the boundary conditions \eqref{Eq Drained Condition decomposed}, \eqref{Eq Non-Flux Condition decomposed} and the interface exchange conditions \eqref{Eq normal flux balance}, \eqref{Eq normal stress balance} are satisfied. 

The interface normal flux forcing term satisfies $ \flux (x,y)  = - \beta(\cdot)\, p\big\vert_{\Gamma} $, for $ \beta(\cdot) \equiv 1 $ (in particular, \textsc{Hypothesis} \ref{Hyp non null local storage coefficient} is verified). Then, the interface normal flux balance condition \textsc{Equation} \eqref{Eq normal flux balance} implies $ \uone\cdot\n\big\vert_{\Gamma} = \utwo\cdot\n\big\vert_{\Gamma} $. Hence, no flux jumps occur despite the change in the order of magnitude between regions, which comes from the flow resistance coefficient introduced in  \eqref{Eqn Mult Cont Coefficient}. 

The convergence results are displayed in the \textsc{Tables} \ref{Table Mult Jump Pressure Approximation} and \ref{Table Mult Jump Velocity Approximation} below. Again, the velocity's behavior agrees with \eqref{Stmt Velocity Numerical Rates of Convergence Ex 1} as expected. However, the pressure $ L^{2}(\Omega_{1}) $-norm differs significantly from the expected one while its $ L^{2}(\Omega_{2}) $-norm differs mildly from the expected one 
\begin{align}\label{Stmt Pressure Numerical Rates of Convergence Ex 4}
& \Vert \poneh - \pone \Vert_{0, \Omega_{1}} = \mathcal{O}(h^{2.2}) ,&
& \Vert \ptwoh - \ptwo \Vert_{0, \Omega_{2}} = \mathcal{O}(h^{2.1}) , &
& \Vert \ptwoh - \ptwo \Vert_{1, \Omega_{2}} = \mathcal{O}(h) . 
\end{align}
The numerical solution for $ h^{-1} = 8 $ is depicted in \textsc{Figure} \ref{Fig Multiscale Jump Approximate Solution Numerical Example}; the choices of grid and display angle were based on optical clarity to illustrate both: the nature of discrete solution and the flux numerical jumps across the interfaces. 
%
\begin{table}[h!]
\caption{Pressures Convergence Table, \textsc{Example} \ref{Ex Multiscale Jump Example}, $ \flux = - p\big\vert_{\Gamma} $}\label{Table Mult Jump Pressure Approximation}
\def\arraystretch{1.4}
\rowcolors{2}{gray!25}{white}
\begin{center}
\begin{tabular}{ c c c c c c c }
    \hline
    \rowcolor{gray!50}
$ h^{-1} $  
& $\Vert  \poneh- p_{1}  \Vert_{ 0, \Omega_{1} } $ 
& $r$  
& $\Vert  \ptwoh- p_{2}  \Vert_{0, \Omega_{2} }$ 
& $r$
& $\Vert  \ptwoh- p_{2}  \Vert_{1, \Omega_{2} }$ 
& $r$ \\ 
    \toprule
$ 1 $ &   0.0624  &   &  0.2574  &  &   1.1991 &   \\
$ 2 $  &  0.0513  &  0.2826  &  0.0746  & 1.7868   & 0.5451  &  1.1374   \\
$ 4 $ &   0.0143  &  1.8429 &  0.0245  & 1.6064    &  0.2799  &   0.9616   \\
$ 8 $  &  0.0037  &    1.9504 &  0.0068  & 1.8492   &  0.1376  &  1.0244    \\ 
$ 16 $  &  0.0009  &     2.0395 &  0.0017  & 2.0000    &  0.0682  &  1.0126 \\ 
$ 32 $  &  0.0002  &     2.1699  &  0.0004  & 2.0875   &  0.0340   &   1.0042  \\ 
    \hline
\end{tabular}
\end{center}
\end{table}
%
%
\begin{table*}[h!]
\caption{Velocities Convergence Table, \textsc{Example} \ref{Ex Multiscale Jump Example}, $ \flux = - p\big\vert_{\Gamma} $}\label{Table Mult Jump Velocity Approximation}
\def\arraystretch{1.4}
\rowcolors{2}{gray!25}{white}
\begin{center}
\begin{tabular}{ c c c c c c c }
    \hline
    \rowcolor{gray!50}
$ h^{-1} $  
& $\Vert  \uone^{h}- \uone  \Vert_{ 0, \Omega_{1} } $ 
& $r$  
& $\Vert  \uone^{h}- \uone  \Vert_{\Hdiv(\Omega_{1})}$ 
& $r$
&  $\Vert  \utwo^{h}- \utwo  \Vert_{ 0, \Omega_{2} } $
& $r$ \\ 
    \toprule
$ 1 $ &   1.0801 &    &   1.0801 &  &   0.3538  & \\
$ 2 $  &   0.3688  &  1.5503   &  0.3688  &  1.5503   &  0.1080   &  1.7119 \\
$ 4 $ &   0.2129  &  0.7927  &  0.2129  &   0.7927  &  0.0558   &  0.9527 \\
$ 8 $  &  0.1125  &   0.9203  &  0.1125  &   0.9203   &   0.0275 &  1.0208   \\ 
$ 16 $  &  0.0571  &  0.9784  &  0.0571  &  0.9784   &  0.0136  &  1.0158   \\ 
$ 32 $  &  0.0287  & 0.9924  &  0.0287   &  0.9924   &  0.0068  &  1.0000   \\ 
    \hline
\end{tabular}
\end{center}
\end{table*}

Next, we present an alternative analysis for the same case. In practice the exact solution is not known, only the forcing terms, namely $ F $, $ \g $ from \textsc{Equation} \eqref{Eqn Int Mult Jump Forcing Terms} but the pressure is not known at the interface i.e., we ignore the normal flux term $ \flux = - p\big\vert_{\Gamma} $. However, this term can be introduced after the first iteration to correct it. In our next numerical experiment the normal flux term in \textsc{Equation} \ref{Eqn Interface Mult Jump Forcing Terms} is replaced by
\begin{equation}\label{Eqn Interface Mult Jump Error Forcing Terms}
%
\flux (x,y) = - \frac{1}{\sqrt{2} } \int_{\Gamma}
\bigg(\sin^{2}\Big(\frac{\pi}{2}(x - 1)\Big)\ind_{ (-1,1)\times\{0\} }  
+  \sin^{2}\Big(\frac{\pi}{2}(y - 1)\Big)\ind_{\{0\} \times (-1,1)}\bigg) \,dS\\
= - \frac{1}{\sqrt{2} }.
%
\end{equation}
The integral above indicates line integral along the interface $ \Gamma$. Notice that this is the first Fourier coefficient of the normal flux term across the interface i.e., the $ L^{2}(\Gamma) $-orthogonal projection of $ \flux $ onto the subspace of constant functions. The numerical solution for $ h^{-1} = 8 $ for this case is displayed in \textsc{Figure} \ref{Fig Multiscale Jump Approximate Solution Numerical Example}; the choices of grid and display angle were based on optical clarity to highlight the errors that the numerical solution contains, both pressure and velocity due to $ \flux $, as well as the flux numerical jumps across the interfaces. The approximation norms are summarized in \textsc{Tables} \ref{Table Mult Jump Error Pressure Approximation} and \ref{Table Mult Jump Error Velocity Approximation}. Clearly, in this case, the convergence rate analysis is pointless since the numerical solution will not converge to the exact solution. However, it makes sense to compute the percentage relative errors in order to have a measure of the attained accuracy.
The relative errors are written on the column to the right of their corresponding absolute errors, as it can be seen after a few steps, the percentage error tends to contract by a half, i.e., $ \mathcal{O}(h) $.  
\begin{figure}[h] 
	\centering
	\begin{subfigure}
	[Pressure Approximate Solution. ]
		{\resizebox{7.8cm}{8.0cm}
			{\includegraphics{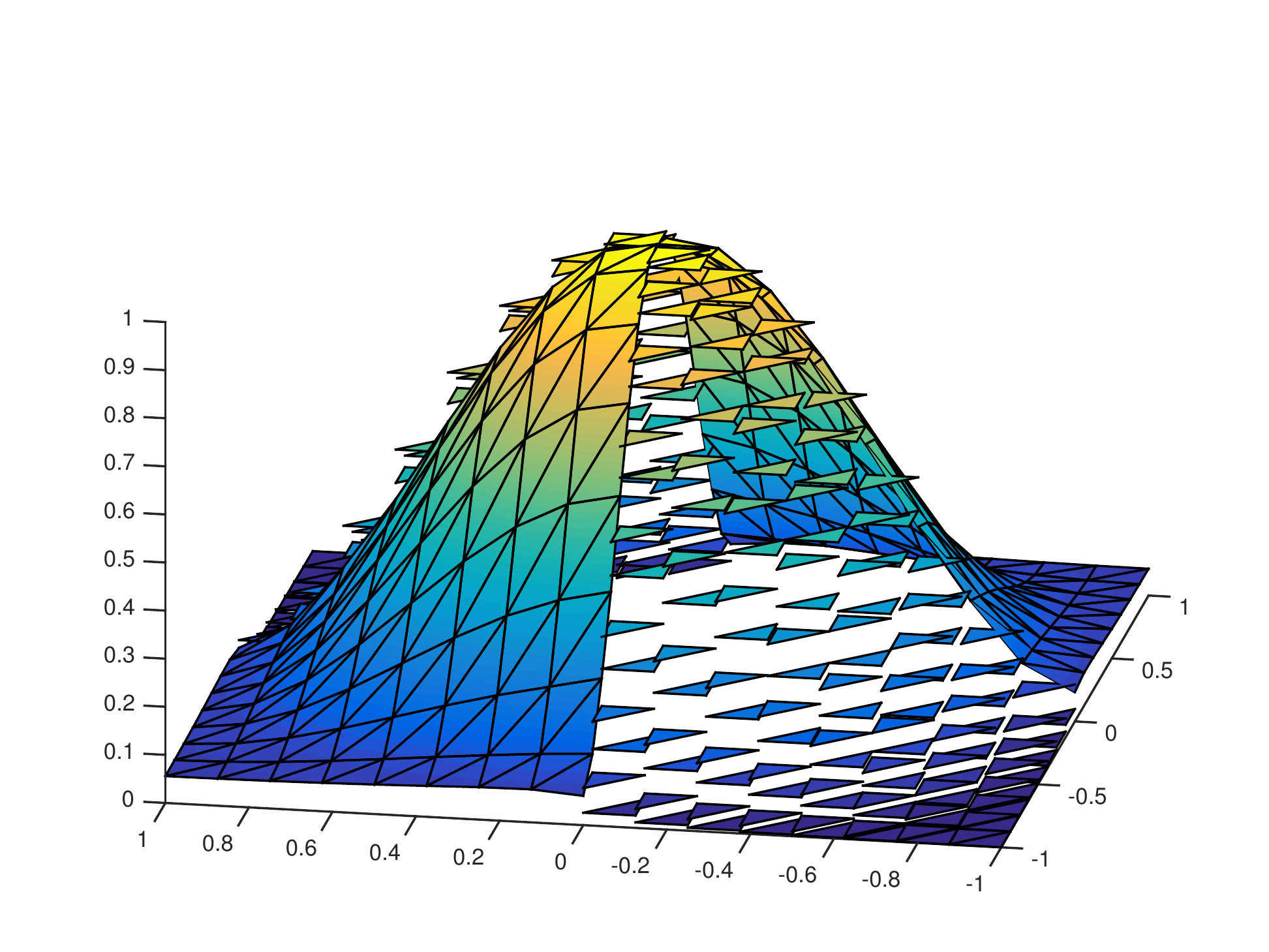} } 
			}
	\end{subfigure} 
	~ 
	\begin{subfigure}[Flux Approximate Solution.]
		{\resizebox{7.8cm}{8.0cm}
			{\includegraphics[scale = 0.33]{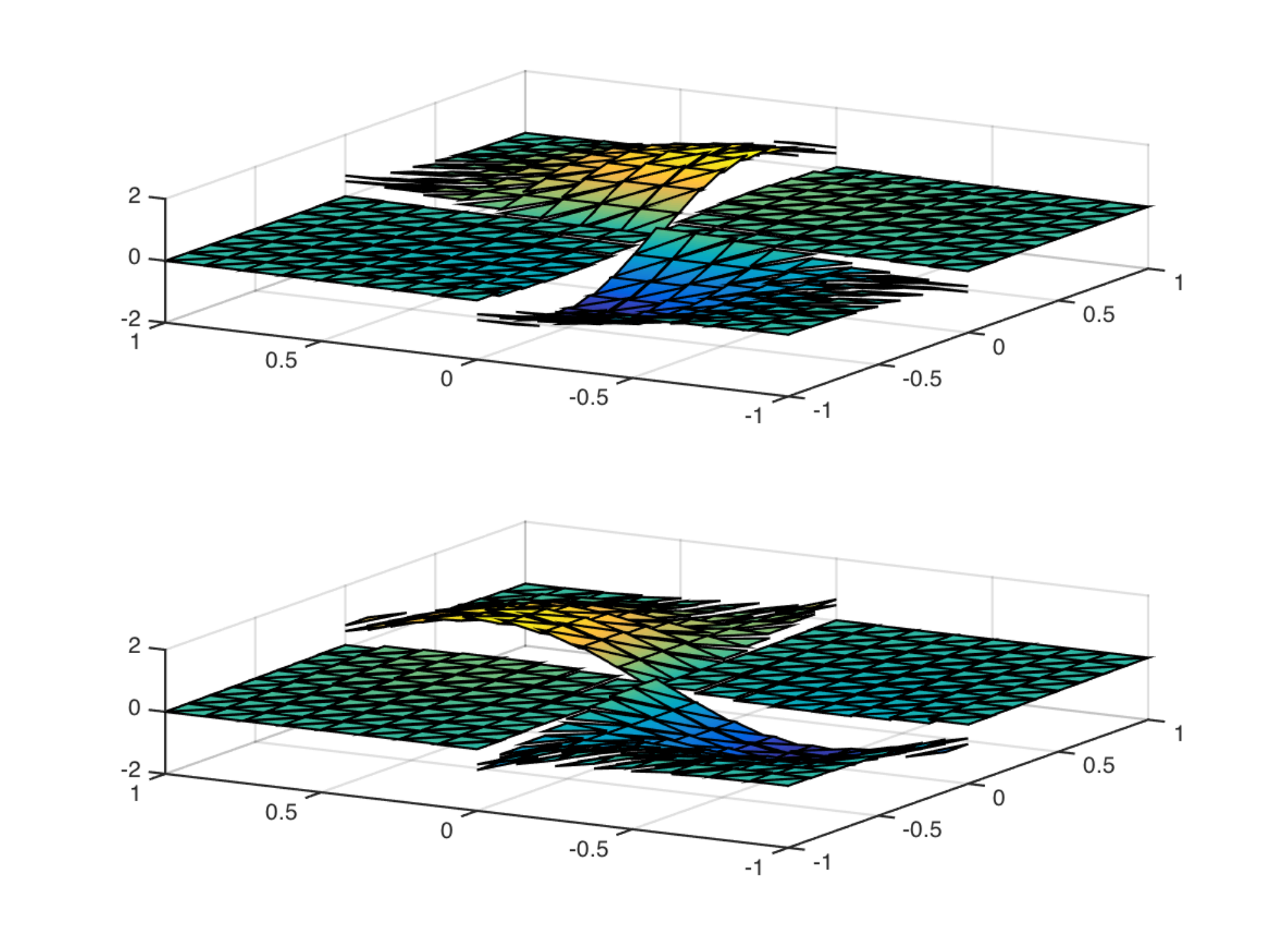} } 
			}                
	\end{subfigure} 
	%
	%
	%
	\caption{\textsc{Example} \ref{Ex Multiscale Jump Example}, approximate solution for a mesh of size $ h^{-1} = 8 $, the sub-domains are $\Omega_{1} = (-1, 0) \times (-1, 0)\cup  (0, 1) \times  (0, 1)$ and $\Omega_2 =  (-1, 0) \times (0, 1)\cup  (0, 1) \times  (-1, 0)$, see \textsc{Identity} \eqref{Def Geometric Parameters}.  Figure (a) depicts the pressure $p^{h}$ of the approximate solution, it is piecewise constant on the domain $\Omega_{1} $ and piecewise linear affine on the domain $\Omega_2$.
	Figure (b) depicts the flux of the approximate solution $\u^{h}$. On the upper right corner is depicted the $\boldsymbol{x}$-component of the flux 
	, which is continuous across \textbf{horizontal edges} of $\Omega_{1}$ and piecewise constant on the domain $\Omega_2$.	
	On the lower right corner we display 
	the $\boldsymbol{y}$-component of the flux, which is continuous across \textbf{vertical edges} of $\Omega_{1}$ and piecewise constant on the domain $\Omega_2$. Again (\textsc{Figure} \ref{Fig Multiscale Jump Approximate Solution Numerical Example}), the abrupt changes of $ \u_{x} $ across $ \{0\}\times (-1,1) $ and $ \u_{y} $ across $ (-1,1)\times\{0\} $ 
	in the exact solution, are now understood as numerical jumps. Notice the errors of this numerical approximation for the boundary conditions for the pressure in $ \Omega_{2} $ and for the flux in $ \Omega_{1} $ (compare with \textsc{Figures} \ref{Fig Multiscale Jump Exact Solution Numerical Example}, and \ref{Fig Multiscale Jump Approximate Solution Numerical Example}), while the weak boundary conditions \eqref{Eq Drained Condition decomposed} and \eqref{Eq Non-Flux Condition decomposed} are satisfied 
	\label{Fig Multiscale Jump Error Solution Numerical Example} }
\end{figure}
%
%
\begin{table}[h!]
\caption{Pressures Convergence Table, \textsc{Example} \ref{Ex Multiscale Jump Example}, $ \flux = - \frac{1}{\sqrt{2} } =  - \frac{1}{\sqrt{2} } \int_{\Gamma} p\big\vert_{\Gamma} \, dS $ }\label{Table Mult Jump Error Pressure Approximation}
\def\arraystretch{1.4}
\rowcolors{2}{gray!25}{white}
\begin{center}
\begin{tabular}{ c c c c c c c }
    \hline
    \rowcolor{gray!50}
$ h^{-1} $  
& $ \Vert  \poneh- p_{1}  \Vert_{ 0, \Omega_{1} } $ 
& 
$ 100 \frac{\Vert  \poneh- p_{1}  \Vert_{ 0, \Omega_{1} }}{\Vert  p_{1}  \Vert_{ 0, \Omega_{1} }} $  
& $ \Vert  \ptwoh- p_{2}  \Vert_{0, \Omega_{2} } $ 
& 
$ 100\frac{ \Vert  \ptwoh- p_{2}  \Vert_{0, \Omega_{2} } }{ \Vert   p_{2}  \Vert_{0, \Omega_{2} } } $
& $\Vert  \ptwoh- p_{2}  \Vert_{1, \Omega_{2} }$ 
& 
$ 100\frac{ \Vert  \ptwoh- p_{2}  \Vert_{1, \Omega_{2} } }{ \Vert   p_{2}  \Vert_{1, \Omega_{2} } } $ 
\\ 
    \toprule
$ 1 $ &   0.1594  &  28.1630 &  0.3785  & 142.7258  &   1.2474 & 198.4346  \\
$ 2 $  &  0.0558  &  5.1081  &  0.1441  & 14.0083   & 0.4945  &  60.9208   \\
$ 4 $ &   0.0414  &  1.9374 &  0.1046  & 4.9625    &  0.3232  &   29.0909   \\
$ 8 $  &  0.0420  &    0.9889 &  0.0880  & 2.0776   &  0.2364  &  14.5850    \\ 
$ 16 $  &  0.0425  &   0.5005 &  0.0822  & 0.9687    &  0.2090  &  7.3318  \\ 
$ 32 $  &  0.0426  &     0.2509  &  0.0801  & 0.4721   &  0.0375   &   3.6772  \\ 
    \hline
\end{tabular}
\end{center}
\end{table}
%
%
\begin{table*}[h!]
\caption{Velocities Convergence Table, \textsc{Example} \ref{Ex Multiscale Jump Example}, $ \flux = - \frac{1}{\sqrt{2} } =  - \frac{1}{\sqrt{2} } \int_{\Gamma} p\big\vert_{\Gamma} \, dS $}\label{Table Mult Jump Error Velocity Approximation}
\def\arraystretch{1.4}
\rowcolors{2}{gray!25}{white}
\begin{center}
\begin{tabular}{ c c c c c c c }
    \hline
    \rowcolor{gray!50}
$ h^{-1} $  
& $ \Vert  \uone^{h}- \uone  \Vert_{ 0, \Omega_{1} } $ 
& Rel. Error 
& $ \Vert  \uone^{h}- \uone  \Vert_{\Hdiv(\Omega_{1})} $ 
& Rel. Error 
& $ \Vert  \utwo^{h}- \utwo  \Vert_{ 0, \Omega_{2} } $ 
& Rel. Error 
\\ 
    \toprule
$ 1 $ &   1.6157 &  106.2303  &   1.6157 & 38.5853  &   0.3439  & 475.9703 \\
$ 2 $  &   0.4454  &  16.3705   &  0.4454  &  4.2843   &  0.0946   &  182.3322 \\
$ 4 $ &   0.3282  &  6.0312  &  0.3282  &   1.5974  &  0.0612   &  89.1516  \\
$ 8 $  &  0.2893  &   2.6582  &  0.2893  &   0.7058   &   0.0439 &  44.9504   \\ 
$ 16 $  &  0.2752  &  1.2644  &  0.2752  &  0.3360   &  0.0384  &  22.6303   \\ 
$ 32 $  &  0.2699  & 0.6200  &  0.2699   &  0.1648   &  0.0375  &  11.3547   \\ 
    \hline
\end{tabular}
\end{center}
\end{table*}
\end{example}
%
%
%
%
%
%
%
%
%
\section{Conclusions and Final Discussion}\label{Sec Conclusions}
%
%
%
%
The present work yields several conclusions summarized below
\begin{enumerate}[(i)]
\item A new conforming primal-dual mixed finite element scheme has been introduced successfully from both points of view: theoretical and numerical.

\item The theoretical analysis of the method includes variational formulation and well-posedness of the continuous problem as well as the choice of finite dimensional spaces, well-posedness (using the LBB theory) and convergence rates for the discrete problem.

\item The method is well-suited for analyzing multiscale porous media fluid flow problems such as oil extraction, groundwater flow and geological fissured systems.

\item The main technical advantages of the method are two: it can handle interface discontinuities which are consistent with the choice of the FEM spaces, see \textsc{Example} \ref{Ex Discontinuous Example}, and it can handle effectively multiscale phenomena since it can easily introduce numerical jumps across the interfaces, see \textsc{Example} \ref{Ex Multiscale Continuous Example}. The latter is numerically convenient even when the exact solution is continuous but it has abrupt changes, see \textsc{Example} \ref{Ex Multiscale Jump Example}. Of course the method can handle regular problems, free of multiple scales and discontinuities, see \textsc{Example} \ref{Ex Continuous Example}.

\item The power of the method lies in the fact that the FEM spaces do not embed strong coupling conditions between regions, on the contrary, they are fully uncoupled and the fluid exchange conditions only hold for the solution (either numerical or theoretical), but not for the test functions.

\item Throughout the pressure convergence tables \ref{Table Pressure Approximation}, \ref{Table Disc Pressure Approximation}, \ref{Table Mult Pressure Approximation} and \ref{Table Mult Jump Pressure Approximation} a substantial superconvergence phenomenon is observed for $ \Vert \poneh - \pone\Vert_{0, \Omega_{1}} $. In \textsc{Tables}  \ref{Table Mult Pressure Approximation} and \ref{Table Mult Jump Pressure Approximation} a mild superconvergence behavior is observed for $ \Vert \ptwoh - \ptwo \Vert_{0, \Omega_{2}} $. It is important to stress that this work made no attempt to present a method with enhanced convergence properties, all the more reason considering that the convergence rate analysis presented in \textsc{Section} \ref{Sec Rate of Convergence} delivers the usual rates of convergence. These observations may come from the regular gridding of the domain or from the particular chosen examples. This will be discussed in future work either by finding examples breaking the superconvergence or developing a new approach to analysis of the convergence rates different from the standard one. 

\item \textsc{Example} \ref{Ex Multiscale Jump Example}, is composed of two parts. The first part is the usual analysis displaying the performance of the method under controlled/lab conditions (\textsc{Tables} \ref{Table Mult Jump Pressure Approximation}, \ref{Table Mult Jump Velocity Approximation}, \textsc{Figure} \ref{Fig Multiscale Jump Approximate Solution Numerical Example}). The second part suggests an iterative heuristic method to attain better numerical results in multiscale problems: start from reasonable (empirical if possible) values of the pressure on the interfaces, use the computed numerical pressure $ \poneh\big\vert_{\Gamma} $ as input for a new iteration and continue in this fashion, until the results attain a desired level of stability from one iteration to the next one. The primal-dual mixed scheme certainly allows to proceed this way, however analyzing is such a method is convergent or under which conditions converges is topic for future work.

\item Finally, the implementation for the 3D porous media problem of the same method should not pose substantial theoretical challenges, but computational ones due to its complexity. The development of such implementation for general domains and grids in a public domain fashion is the topic of future work. 
%
\end{enumerate}
%
%
%
%
%
%
%
\section*{Acknowledgements}
%
%
%
%
The Author wishes to acknowledge Universidad Nacional de Colombia, Sede Medell\'in for its support in this work through the project HERMES 27798. The Author also wishes to thank Professor Carsten Carstensen, from Institut f\"{u}r Mathematik, Humboldt-Universit\"at zu Berlin, Germany, for making freely available his software \textbf{EBmfem.m} and \textbf{fem2d.m}. Without these priceless tools, the implementation presented in \textsc{Section}  \ref{Sec Numerical Example} would have not been possible. Thanks to Professor Bibiana L\'opez Rodr\'iguez from Universidad Nacional de Colombia, Sede Medell\'in, for helping the Author through multiple discussions in the paper's production. Special thanks to Professor Ma\l{}gorzata Peszy\'nska from Oregon State University, whose teachings have guided the Author across all the stages of this work.




\end{document}